\newcommand\numberthis{\addtocounter{equation}{1}\tag{\theequation}}
\pgfplotsset{compat=1.15}
\newcommand{\rev}[1]{\textcolor{black}{#1}}
   \newcommand{\hide}[1]{}%
   \newcommand{\hide}[1]{#1}%
\DeclareMathOperator{\tr}{tr}
\DeclareMathOperator{\rank}{rank}
\DeclareMathOperator{\diag}{diag}
\DeclareMathOperator{\range}{range}
\renewcommand*{\eqref}[1]{(\ref{#1})}
\newcommand{\T}{\textsf{\textup{T}}}
\newcommand{\F}{\textsf{\textup{F}}}
\newcommand{\ALG}{\textsf{\textup{ALG}}}
\newtheorem{theorem}{Theorem}[section]
\newtheorem{corollary}[theorem]{Corollary}
\newtheorem{lemma}[theorem]{Lemma}
\newtheorem*{remark}{Remark}
\theoremstyle{definition}
\title{Randomized block-Krylov subspace methods for low-rank approximation of matrix functions}
\author{David Persson\thanks{New York University \& Flatiron Institute \href{mailto:dup210@nyu.edu}{dup210@nyu.edu}, \href{mailto:dpersson@flatironinstitute.org}{dpersson@flatironinstitute.org}}
\and Tyler Chen\thanks{New York University, \href{mailto:tyler.chen@nyu.edu}{tyler.chen@nyu.edu}} \and Christopher Musco\thanks{New York University, \href{mailto:cmusco@nyu.edu}{cmusco@nyu.edu} }}
\begin{document}

\maketitle

\begin{abstract}
The randomized SVD is a method to compute an inexpensive, yet accurate, low-rank approximation of a matrix. 
The algorithm assumes access to the matrix through matrix-vector products (matvecs).
Therefore, when we would like to apply the randomized SVD to a matrix function, $f(\bm{A})$, one needs to approximate matvecs with $f(\bm{A})$ using some other algorithm, which is typically treated as a black-box. Chen and Hallman (SIMAX 2023) argued that, in the common setting where matvecs with $f(\bm{A})$ are approximated using \emph{Krylov subspace methods} (KSMs), \rev{a} more efficient low-rank approximation is possible if we \emph{open this black-box}. They present an alternative approach that significantly outperforms the naive combination of KSMs with the randomized SVD, although the method lacked theoretical justification.
In this work, we take a closer look at the method, and provide strong and intuitive error bounds that justify its excellent performance for low-rank approximation of matrix functions.\\\\
\textbf{Key words.} Krylov subspace methods, Low-rank approximation, Randomized numerical linear algebra, Matrix functions \\\\
\textbf{MSC Classes.} 65F55, 65F60, 68W20

%In the context of trace estimation Chen and Hallman (SIMAX 2023) introduced a version of the Hutch++ algorithm to estimate the trace of a matrix function $f(\bm{A})$. The novelty of their method comes from a more efficient method to compute a randomized low-rank approximation of $f(\bm{A})$ as a variance reduction technique for the stochastic trace estimator. The method constructs a low-rank approximation of $f(\bm{A})$ from a Krylov subspace of $\bm{A}$. Numerical experiments show that their method performs significantly better than similar randomized low-rank approximation methods. However, the theoretical justification was missing. In this work we provide an intuitive analysis of their method and we thus justify its excellent performance. 
\end{abstract}
%\David{We discussed that we should say when we are $(1+\varepsilon)$ away from optimal. Maybe it is better to ignore this, because we would need $\epsilon_1,\epsilon_2,\epsilon_3$ to be small too. }
\section{Introduction}\label{section:introduction}
%Introduce the following topics:
%\begin{itemize}
 %   \item Low rank approximation using inexact matvecs. Used in Hutch++. 
  %  \item Smarter methods to compute low rank approximations if matvecs are approximated using Krylov subspace methods. 
   % \item We analyze two things:
    %\begin{enumerate}
     %   \item Low rank approximation and Hutch++ with inexact matvecs;
      %  \item Krylov aware low rank approximation. Don't throw away information in
       % \begin{equation*}
            %\mathcal{K}_{q}(\bm{A},\bm{\Omega}) = \range\left(\begin{bmatrix} \bm{\Omega}& \bm{A}\bm{\Omega} & \cdots & \bm{A}^{q-1} \bm{\Omega} \end{bmatrix}\right).
        %\end{equation*}
    %\end{enumerate}
%\end{itemize}

Computing and approximating matrix functions is an important task in many application areas, such as differential equations~\cite{havel1994matrix,highamscaleandsquare}, network analysis~\cite{estrada2000characterization,estradahigham}, and machine learning \cite{DongErikssonNickisch:2017,PleissJankowiakEriksson:2020}. For a symmetric matrix $\bm{A} \in \mathbb{R}^{n \times n}$ with eigenvalue decomposition 
\begin{equation*}
    \bm{A} = \bm{U} \bm{\Lambda} \bm{U}^\T, \quad \bm{\Lambda} = \diag(\lambda_1,\ldots,\lambda_n),
\end{equation*}
the matrix function is defined as 
\begin{equation*}
    f(\bm{A}) = \bm{U} f(\bm{\Lambda}) \bm{U}^\T, \quad f(\bm{\Lambda}) = \diag(f(\lambda_1),\ldots,f(\lambda_n)),
\end{equation*}
where $f$ is a function defined on the eigenvalues of $\bm{A}$. Explicitly forming $f(\bm{A})$ generally costs $O(n^3)$ operations, which becomes prohibitively expensive for large $n$. 

However, instead of computing $f(\bm{A})$ exactly, in many cases we only need an approximation to $f(\bm{A})$, such as a low-rank approximation. For example, recent work has shown that low-rank approximations can be used to accurately estimate the trace or diagonal entries of $f(\bm{A})$ \cite{baston2022stochastic,epperly2023xtrace,jiang2021optimal,hpp, ahpp}. Furthermore, when $f(\bm{A})$ admits an accurate low-rank approximation $\bm{B}$, one can accurately and cheaply approximate matrix-vector products (matvecs) with $f(\bm{A})$ using the approximation
\begin{equation*}
    \bm{B} \bm{Z} \approx f(\bm{A}) \bm{Z}.
\end{equation*}

Popular algorithms to compute cheap, yet accurate, low-rank approximations of matrices include the randomized SVD, randomized subspace iteration, and randomized block Krylov iteration. For an overview and analysis of these algorithms, we turn readers to \cite{rsvd,MM15,tropp2023randomized}. These methods can be referred to as \textit{matrix-free}, since they do not require explicit access to the matrix; they only require implicit access via matvecs. Furthermore, the dominant computational cost of these algorithms is usually the number of matvecs with the matrix that we want to approximate. 

When the matrix we wish to approximate is a matrix-function, $f(\bm{A})$, we do not have access to exact matvecs \rev{even in exact arithmetic}. However, we can typically compute approximate matvecs through some black-box method, such as the (block) Lanczos method, or another Krylov subspace method \cite{blocklanczos,guttel,functionsofmatrices,lanczosfunctionshandbook}. 
These methods are \emph{themselves} matrix-free, only requiring matvecs with $\bm{A}$ to \rev{approximate} matvecs with $f(\bm{A})$. 

To summarize, the standard approach for efficiently computing a low-rank approximation to a matrix function, $f(\bm{A})$, combines two black-box methods: an outer-loop like the randomized SVD that requires matvecs with $f(\bm{A})$, and an inner-loop that computes each of these matvecs using multiple matvecs with $\bm{A}$\rev{.} 
A broad theme of this paper and related work is that computational benefits can often be obtained by \emph{opening up} these black-boxes. I.e., we can get away with \emph{fewer} total matvecs with $\bm{A}$ to obtain a low-rank approximation to $f(\bm{A})$ if we shed the simple inner/outerloop approach.
Such an approach was taken in \cite{chen_hallman_23} and \cite{persson_kressner_23,funnystrom2}, where it is shown that algorithms like the randomized SVD and Nystr\"om approximation for low-rank approximation, and  Hutch++ for trace estimation, can benefit from such introspection.

Concretely, our work builds on a recent paper of
Chen and Hallman \cite{chen_hallman_23}, which develops so-called ``Krylov-aware'' variant of the Hutch++ and A-Hutch++ algorithms for trace estimation \cite{hpp,ahpp} .
The Hutch++ algorithm uses the randomized SVD to compute a low-rank approximation, which in turn is used reduce the variance of a stochastic estimator for the trace of $f(\bm{A})$ \cite{cortinoviskressner,ubarusaad,ubaru2017applications}. \rev{In these settings, even coarse approximations of $f(\bm{A})$ can yield computational benefits. This makes the task of computing such approximations relevant, even when $f(\bm{A})$ does not admit a highly accurate low-rank approximation.}
Chen and Hallman develop a more efficient method to compute this low-rank approximation for matrix functions \rev{and use the method in their version of the Hutch++ algorithm}. 
Numerical experiments suggest that their method is significantly more efficient than naively implementing the randomized SVD on $f(\bm{A})$ with approximate matvecs obtained in a black-box way from a Krylov subspace method. 
\rev{Although their paper is framed around the task of estimating $\tr(f(\bm{A}))$, the key reason for the effectiveness of their method is that the low-rank approximation is more accurate than the randomized SVD. Consequently, to theoretically understand the approximation quality of their method, one must first understand the approximation quality of the low-rank approximation.}

\rev{Our contributions are twofold. First, we strengthen the theoretical foundations of the low-rank approximation method introduced in \cite{chen_hallman_23} by providing explicit and intuitive error bounds that justify its excellent empirical performance. To the best of our knowledge, this is the first work to provide low-rank approximation error bounds for \emph{general} matrix functions; existing theory covers only operator monotone functions \cite{persson_kressner_23,funnystrom2}. Classical theory for Krylov subspace methods have focused on (i) eigenpair approximation, (ii) approximations of the action $f(\bm{A}) \bm{B}$, (iii) low-rank approximation of $\bm{A}$ itself. However, we are not aware of any results that translate existing results to low-rank approximation of matrix-functions. Moreover, our error bounds pave the way to obtain approximation guarantees for estimating $\tr(f(\bm{A}))$.}

\rev{Second, our analysis builds on classical work on Krylov subspace methods for low-rank approximation \cite{drineasipsenkontopoulouismail,rsvd,MM15,tropp2016randomized} and eigenvalue problems \cite{shmuel,paige1971computation,saad1980rates,underwood1975iterative}. We aim to show how this rich literature can be used to obtain approximation guarantees for matrix functions. While we do not claim that the method analyzed here is optimal -- in fact, in \Cref{section:conclusion} we discuss a variant that performs even better in practice, but for which a precise error analysis is currently out of reach -- we view this work as a first step toward a more general theory. Ultimately, we hope that future progress will make it possible to translate classical Krylov theory into general results for low-rank approximation of matrix functions, similar to how \cite{funnystrom2} provided a general ``black-box'' theory that allowed classical results on low-rank approximation to be translated into approximation guarantees for operator monotone matrix functions. }

% Our main contribution is to strengthen our theoretical understanding of the low-rank approximation method from \cite{chen_hallman_23} by providing strong and intuitive error bounds that justify its excellent empirical performance. \rev{Our analysis draws inspiration from classical work on Krylov subspace methods for low-rank approximation \cite{drineasipsenkontopoulouismail,rsvd,MM15,tropp2016randomized} and eigenvalue problems \cite{shmuel, paige1971computation, saad1980rates, underwood1975iterative}. To the best of our knowledge, the current work is the first work to provide low-rank approximation error bounds for \emph{general} matrix functions. Furthermore, our error bounds pave the way to obtain approximation guarantees for estimating $\tr(f(\bm{A}))$. } 

\begin{remark}\label{section:motivation}
\rev{The bounds we prove in this work contain quantities that depend on the function $f$. In general these terms may be difficult to bound further without any additional assumptions on $f$. A key motivating example of the work by Chen and Hallman \cite{chen_hallman_23} is the matrix exponential $\exp(\bm{A})$ for a symmetric matrix $\bm{A}$. Computing a low-rank approximation to $\exp(\bm{A})$ is a task that arises in numerous applications, ranging from network analysis \cite{hpp}, to quantum thermodynamics \cite{chen_hallman_23,epperly2023xtrace}, to solving PDEs \cite{persson_kressner_23}. 
Note that the exponential has a decaying behavior. Hence, $\exp(\bm{A})$ may admit an accurate low-rank approximation, even when $\bm{A}$ does not. For these reasons, we provide explicit bounds for the exponential and focus our experiments on this case.\footnote{\rev{Other functions, such as $\sqrt{x}, \log(1+x),$ and $\frac{x}{x+1}$, are also relevant in this context. However, these functions are \emph{operator monotone}, and in such cases, alternative methods are more suitable \cite{persson_kressner_23,funnystrom2}.}}}
\end{remark}

\subsection{Notation}
%\Chris{ADd definition of $\|\bm{X}\|_2$ and possible $\|\bm{X}\|_F$}
Let $f: \mathbb{R} \mapsto \mathbb{R}$ be a function and let $\bm{A}$ be a symmetric matrix with eigendecomposition
\begin{align}\label{eq:A}
    \begin{split}
    \bm{A} &= \bm{U} \bm{\Lambda} \bm{U}^\T = \begin{bmatrix} \bm{U}_k & \bm{U}_{n \setminus k} \end{bmatrix} \begin{bmatrix} \bm{\Lambda}_k & \\ & \bm{\Lambda}_{n \setminus k} \end{bmatrix} \begin{bmatrix} \bm{U}_k^\T \\ \bm{U}_{n\setminus k}^\T \end{bmatrix}, \\
     \bm{\Lambda}_k &= \diag(\lambda_1,\lambda_2,\cdots,\lambda_k),\\
     \bm{\Lambda}_{n \setminus k} &= \diag(\lambda_{k+1},\lambda_{k+2},\cdots,\lambda_n),
     \end{split}
\end{align}
where the eigenvalues are ordered so that 
\begin{equation*}
|f(\lambda_1)| \geq |f(\lambda_2)| \geq \ldots \geq |f(\lambda_n)|.
\end{equation*}
For a matrix $\bm{B}$ we denote by $\|\bm{B}\|_\F$ and $\|\bm{B}\|_2$ the \emph{Frobenius norm} and \emph{spectral norm}, respectively. 
\rev{For a matrix $\bm{\Omega} \in \mathbb{R}^{n \times \ell}$ we define
\begin{equation}\label{eq:omega_partition}
    \bm{\Omega}_k := \bm{U}_k^\T \bm{\Omega}, \quad \bm{\Omega}_{n\setminus k} := \bm{U}_{n\setminus k}^\T \bm{\Omega}
\end{equation}
Hence, if $\ell \geq k$ and $\rank(\bm{\Omega}_k) = k$ the pseudo-inverse $\bm{\Omega}_k^{\dagger} = \bm{\Omega}_k^\T(\bm{\Omega}_k \bm{\Omega}_k^\T)^{-1}$ satisfies $\bm{\Omega}_k\bm{\Omega}_k^{\dagger} = \bm{I}$.} For any matrix $\bm{B} \in \mathbb{R}^{m \times n}$, the submatrix consisting of rows $r_1$ through $r_2$ and columns $c_1$ through $c_2$ is denoted by $\bm{B}_{r_1:r_2,c_1:c_2}$. We write $\bm{B}_{:,c_1:c_2}$ and $\bm{B}_{r_1:r_2,:}$ to denote $\bm{B}_{1:m,c_1:c_2}$ and $\bm{B}_{r_1:r_2,1:n}$ respectively. We write $\llbracket \bm{B} \rrbracket_k$ to denote the best rank \rev{(at most)} $k$ approximation of $\bm{B}$ obtained by truncating the SVD.
By $\mathbb{P}_s$ we denote the set of all \rev{scalar} polynomials whose degree is at most $s$. %We write $\bm{E}_i = \bm{e}_i \otimes \bm{I}$, where $\bm{e}_i$ is the $i^{\text{th}}$ standard basis vector, and the sizes of $\bm{e}_i$ and $\bm{I}$ can be inferred from the context. 
The Krylov subspace $\mathcal{K}_s(\bm{A},\bm{\Omega})$ is defined as
\begin{equation*}
    \mathcal{K}_{s}(\bm{A},\bm{\Omega}) := \range\left(\begin{bmatrix} \bm{\Omega} & \bm{A} \bm{\Omega} & \cdots & \bm{A}^{s-1} \bm{\Omega} \end{bmatrix}\right).
\end{equation*}
%\Chris{Should $d_s$ also have $\ell$ in the subscript?} \David{Hmm, maybe. I will go through the paper and change it}
We denote $d_{s,\ell} := \dim(\mathcal{K}_s(\bm{A},\bm{\Omega}))$. For a function $f$ defined on a set $I$ we write
\begin{equation*}
    \|f\|_{L^{\infty}(I)} := \sup\limits_{x \in I}|f(x)|.
\end{equation*}
\section{Krylov-aware low-rank approximation}

We now describe and motivate the algorithm that we will analyse. Inspired by \rev{the work of Chen and Hallman} \cite{chen_hallman_23}, we will call it \textit{Krylov-aware low-rank approximation}. \rev{While the primary goal of Chen and Hallman's work was trace estimation of matrix functions, a central component of their method is the Krylov-aware low-rank approximation method presented in this section.}

We begin with outlining the block-Lanczos algorithm to approximate matvecs with matrix functions. Next, we present how one would naively implement the randomized SVD on $f(\bm{A})$ using the block-Lanczos method to approximate matvecs. Finally, we present the alternative Krylov-aware algorithm and why this method allows us to gain efficiencies. 

\subsection{The block-Lanczos algorithm}\label{section:block_lanczos}

Given an $n\times \ell$ matrix $\bm{\Omega}$, the block-Lanczos algorithm (\cref{alg:block_lanczos}) can be used to iteratively obtain an orthonormal (graded) basis for $\mathcal{K}_{s}(\bm{A},\bm{\Omega})$. 
In particular, using $s$ block-\rev{matvecs} with $\bm{A}$, the algorithm produces \rev{an orthonormal} basis $\bm{Q}_s$ and a block-tridiagonal matrix $\bm{T}_s$ 
\begin{equation}\label{eqn:QTdef}
\bm{Q}_s = 
    \begin{bmatrix} \bm{V}_0 & \cdots & \bm{V}_{s-1} \end{bmatrix}
,\quad
    \bm{T}_s = \bm{Q}_s^\T \bm{A}\bm{Q}_s = \rev{\begin{bmatrix} \bm{M}_1 & \bm{R}_1^\T & & \\
    \bm{R}_1 & \ddots & \ddots & \\
    & \ddots & \ddots & \bm{R}_{s-1}^\T \\
    & & \bm{R}_{s-1} & \bm{M}_s \end{bmatrix}}, 
    % \operatorname{tridiag}
    % \left(\hspace{-.75em} \begin{array}{c}
    %     \begin{array}{cccc} \bm{R}_1^\T  & \cdots & \bm{R}_{s-1}^\T \end{array} \\
    %         \begin{array}{ccccc} \bm{M}_1 & \cdots & \cdots & \bm{M}_{s} \end{array} \\
    %     \begin{array}{cccc} \bm{R}_1 & \cdots & \bm{R}_{s-1}  \end{array} 
    % \end{array} \hspace{-.75em}\right),
    % % \begin{bmatrix}
    %     \bm{M}_1 & \bm{R}_1^\T \\
    %     \bm{R}_1 & \ddots & \ddots \\
    %     & \ddots & \ddots & \bm{R}_{q-1}^\T \\
    %     & & \bm{R}_{q-1} & \bm{M}_{q} 
    % \end{bmatrix}
\end{equation}
%\David{I removed the three-term recurrence. We never use it} %These are related by the block-three-term recurrence\begin{equation}\label{eqn:block_krylov}
    %\bm{A} \bm{Q}_s = \bm{Q}_s \bm{T}_s + \bm{V}_s \bm{R}_s \bm{E}_s^\T,
%\end{equation}
where $\bm{R}_0$ is also output by the algorithm and is given by the relation $\bm{\Omega} = \bm{V}_0 \bm{R}_0$. 
\begin{remark}
The block sizes of $\bm{Q}_s$ and $\bm{T}_s$ will never be larger than $\ell$, but may be smaller if the Krylov subspaces $\mathcal{K}_{i}(\bm{A},\bm{\Omega})$ obtained along the way have dimension less than $i\ell$. 
In particular, if $\mathcal{K}_{i}(\bm{A},\bm{\Omega}) = \mathcal{K}_{i-1}(\bm{A},\bm{\Omega})$, then the blocks $\bm{V}_j, \bm{R}_{j-1}, \bm{M}_{j}$ will be empty for all $j \geq i-1$.
The algorithm can terminate at this point, but for analysis it will be useful to imagine that the algorithm is continued and zero-width blocks are produced until it terminates at iteration $s$.
\end{remark}

\begin{algorithm}
\caption{Block-Lanczos Algorithm}
\label{alg:block_lanczos}
\textbf{input:} Symmetric $\bm{A} \in \mathbb{R}^{n \times n}$. Matrix $\bm{\Omega} \in \mathbb{R}^{n \times \ell}$. Number of iterations $s$.\\
\textbf{output:} Orthonormal basis $\bm{Q}_s$ for $\mathcal{K}_{s}(\bm{A}, \bm{\Omega})$,
%\begin{bmatrix} \bm{Q}_1 & \bm{Q}_2 \end{bmatrix}$ for $\mathcal{K}_{q}(\bm{A}, \bm{Z})$, where $\bm{Q}_1$ is orthonormal basis for $\mathcal{K}_{s}(\bm{A}, \bm{Z})$, 
and block tridiagonal $\bm{T}_s$.
\begin{algorithmic}[1]
    \State Compute an orthonormal basis $\bm{V}_0$ for $\range(\bm{\Omega})$ and $\bm{R}_0 = \bm{V}_0^T \bm{\Omega}$.
    \For{$i=1,\ldots, s$}
    \State $\bm{Y} = \bm{A} \bm{V}_{i-1} - \bm{V}_{i-2} \bm{R}_{i-1}^\T$ \Comment{$\bm{Y} = \bm{A} \bm{V}_{i-1}$ if $i=1$}
    \State $\bm{M}_i = \bm{V}_{i-1}^\T \bm{Y}$ 
    % \Chris{Is there a reason to have steps 4 and 5 seperate?} \David{We need to define $\bm{M}_i$ to define $\bm{T}_s$}
    \State $\bm{Y} = \bm{Y} - \bm{V}_{i-1}\bm{M}_i$
    \State $\bm{Y} = \bm{Y} - \sum_{j=0}^{i-1} \bm{V}_j\bm{V}_j^\T \bm{Y}$ \Comment{reorthogonalize (repeat as needed)}
    \State Compute an orthonormal basis $\bm{V}_i$ for $\range(\bm{Y})$ and $\bm{R}_i = \bm{V}_i^T \bm{Y}$. \label{alg:line:qr}
    \EndFor
    \State \Return $\bm{Q}_s$ and $\bm{T}_s$ as in \cref{eqn:QTdef}
\end{algorithmic}
\end{algorithm}
% \Chris{Should we leave reothogonalization out of Algorithm 1? Is it necessary? Or maybe say "optionaly reothogonalize" then add a comment that doing so is only necessary in finite precision?} \David{Yes, will do}

% block-three term recurrence:
% \begin{equation}
%     \bm{A} \overline{\bm{Q}}_i 
%     = \overline{\bm{Q}}_{i-1} \bm{R}_{i}^\T + \overline{\bm{Q}}_i \bm{M}_i + \overline{\bm{Q}}_{i+1} \bm{R}_{i+1}
% \end{equation}

\rev{The Lanczos method has a rich history and has been studied extensively. Single-vector methods (that is, when $\ell = 1$) have classically been studied in the context of approximating a small number of extreme eigenvalues \cite{shmuel,paige1971computation,paige1972computational,scottthesis,krylovschur,wilkinson1971algebraic,thickrestart}, and large-block methods have classically been studied in the context of low-rank approximation \cite{drineasipsenkontopoulouismail, halkomartinssonshkolniskytygert,MM15,tropp2023randomized} and for approximating a few of the largest eigenvalues \cite{cullum,saad1980rates,saad2011numerical,underwood1975iterative}. Moreover, the} block-Lanczos algorithm can \rev{also} be used to approximate matvecs and quadratic forms with $f(\bm{A})$ using the approximations\footnote{\rev{Other approximations are also possible.}}
\begin{align}\label{eq:lanczos_approximation}
    \bm{Q}_s f(\bm{T}_s)_{:,1:\ell}\bm{R}_0 &\approx f(\bm{A}) \bm{\Omega},
    \\
    \label{eq:lanczos_approximation_qf}
    \bm{R}_0^\T f(\bm{T}_s)_{1:\ell,1:\ell}\bm{R}_0 &\approx \bm{\Omega}^\T f(\bm{A}) \bm{\Omega}. 
\end{align}
\rev{Such approximations are often referred to as block-FOM for
functions of matrices \cite{OLeary1980,etna_vol47_pp100-126,frommer_lund_szyld_20} and block-Gauss-quadrature \cite{golub_meurant_09,Zimmerling2025} respectively.}

\rev{It is well-known that if} $f$ is a low-degree polynomial, then the approximations \cref{eq:lanczos_approximation,eq:lanczos_approximation_qf} are exact.\footnote{\eqref{eq:lanczos_approximation} and \eqref{eq:lanczos_approximation_qf} are written out under the assumption that $\bm{\Omega}$ has rank $\ell$. If $\rank(\bm{\Omega}) <\ell$, then the index set $1:\ell$ should be replaced with $1:\rank(\bm{\Omega})$ in both \eqref{eq:lanczos_approximation} and \eqref{eq:lanczos_approximation_qf}.} 
% \footnote{In fact, a stronger result for multipolynomials holds, see \cite[Theorem 2.7]{frommer_lund_szyld_20}. \Chris{I looked at this paper and didn't see the term ``multipolynomial'' used. I haven't heard it before. Can we remove or just say something more vague?} \David{Perhaps change the word "multipolynomial" to "matrix polynomial", which they use in \cite{frommer_lund_szyld_20}}}
%
\begin{lemma}[\rev{see e.g. \cite[Section 2]{hochbruck_lubich}, \cite[Chapters 2-7]{golub_meurant_09}, \cite[Section 3]{chen_hallman_23}}]\label{lemma:block_lanczos_exact}
The approximation \cref{eq:lanczos_approximation} is exact if $f\in\mathbb{P}_{s-1}$, and the approximation \cref{eq:lanczos_approximation_qf} is exact if $f\in \mathbb{P}_{2s-1}$.
\end{lemma}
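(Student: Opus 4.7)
The plan is to reduce both claims to the monomial case $p(x) = x^j$ by linearity. Write $\bm{E}_1$ for the tall matrix whose top block is $\bm{I}_\ell$ and whose remaining blocks are zero, and similarly $\bm{E}_s$ for the analogous matrix with the identity in the last block, so that $\bm{Q}_s\bm{E}_1 = \bm{V}_0$ and $\bm{Q}_s\bm{E}_s = \bm{V}_{s-1}$. The initialization then reads $\bm{\Omega} = \bm{V}_0 \bm{R}_0 = \bm{Q}_s \bm{E}_1 \bm{R}_0$, and $p(\bm{T}_s)_{:,1:\ell} = p(\bm{T}_s)\bm{E}_1$.

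First I would record the standard consequence of \Cref{alg:block_lanczos}, the block three-term recurrence
\[
\bm{A}\bm{Q}_s = \bm{Q}_s\bm{T}_s + \bm{V}_s \bm{R}_s \bm{E}_s^\T,
\]
where $\bm{V}_s$ is the next orthonormal block (orthogonal to $\bm{Q}_s$) and $\bm{R}_s$ the corresponding factor from the QR step. The workhorse identity, proved by induction on $j$, is
\[
\bm{A}^j \bm{Q}_s \bm{E}_1 = \bm{Q}_s \bm{T}_s^j \bm{E}_1 \qquad \text{for every } j \leq s-1.
\]
The inductive step multiplies by $\bm{A}$ and uses the recurrence; the spurious term $\bm{V}_s \bm{R}_s \bm{E}_s^\T \bm{T}_s^{j-1} \bm{E}_1$ vanishes because block-tridiagonality of $\bm{T}_s$ forces $\bm{T}_s^{j-1}$ to have block-bandwidth at most $j-1$, and $j-1 < s-1$ puts the $(s,1)$ block at zero. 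Linearity in $p$ then gives $p(\bm{A})\bm{\Omega} = \bm{Q}_s p(\bm{T}_s)\bm{E}_1 \bm{R}_0$ for every $p\in\mathbb{P}_{s-1}$, which is exactly \eqref{eq:lanczos_approximation}.

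For \eqref{eq:lanczos_approximation_qf}, transposing the workhorse identity yields, for every $a \leq s-1$,
\[
\bm{E}_1^\T \bm{Q}_s^\T \bm{A}^a = \bm{E}_1^\T \bm{T}_s^a \bm{Q}_s^\T.
\]
For a monomial $\bm{A}^k$ with $k \leq 2s-2$, split $k = a + b$ with $a, b \leq s-1$ and apply the two identities on either side of $\bm{Q}_s^\T \bm{A}^k \bm{Q}_s \bm{E}_1$ to obtain $\bm{E}_1^\T \bm{Q}_s^\T \bm{A}^k \bm{Q}_s \bm{E}_1 = \bm{E}_1^\T \bm{T}_s^k \bm{E}_1$. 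The boundary case $k = 2s-1$ is the subtle one: any split $a+b = k$ forces one factor to have degree at least $s$, beyond the reach of the workhorse identity. The fix is to exploit the Galerkin identity $\bm{Q}_s^\T \bm{A} \bm{Q}_s = \bm{T}_s$ to squeeze out one extra power:
\[
\bm{Q}_s^\T \bm{A}^s \bm{Q}_s \bm{E}_1 = \bm{Q}_s^\T \bm{A} \cdot (\bm{A}^{s-1} \bm{Q}_s \bm{E}_1) = \bm{Q}_s^\T \bm{A} \bm{Q}_s \bm{T}_s^{s-1} \bm{E}_1 = \bm{T}_s^s \bm{E}_1.
\]
Combining with the left factor $\bm{E}_1^\T \bm{Q}_s^\T \bm{A}^{s-1} = \bm{E}_1^\T \bm{T}_s^{s-1} \bm{Q}_s^\T$ delivers $\bm{E}_1^\T \bm{Q}_s^\T \bm{A}^{2s-1} \bm{Q}_s \bm{E}_1 = \bm{E}_1^\T \bm{T}_s^{2s-1} \bm{E}_1$. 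Wrapping both sides with $\bm{R}_0^\T (\,\cdot\,) \bm{R}_0$ and summing over monomials proves \eqref{eq:lanczos_approximation_qf} for every $p \in \mathbb{P}_{2s-1}$.

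The only genuine obstacle is the $k = 2s-1$ case: naive bilinear splits max out at degree $2s-2$, and the extra power is recovered only by observing that the Galerkin projection $\bm{Q}_s^\T \bm{A} \bm{Q}_s$ equals $\bm{T}_s$ \emph{exactly}, not up to a residual. Everything else is bookkeeping with the block three-term recurrence and with the block-bandwidth of powers of a block-tridiagonal matrix.
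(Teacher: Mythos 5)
Your proof is correct. The paper does not prove \Cref{lemma:block_lanczos_exact} itself --- it cites it from the literature (Hochbruck--Lubich, Golub--Meurant, Chen--Hallman) --- and your argument is precisely the standard one found there: the exactness identity $\bm{A}^j \bm{Q}_s \bm{E}_1 = \bm{Q}_s \bm{T}_s^j \bm{E}_1$ for $j \le s-1$ via the block three-term recurrence and the bandwidth of powers of a block-tridiagonal matrix, the symmetric split $k = a+b$ for the quadratic form up to degree $2s-2$, and the Galerkin identity $\bm{Q}_s^\T\bm{A}\bm{Q}_s = \bm{T}_s$ to reach degree $2s-1$. The only point worth flagging is the degenerate case where the Krylov subspace stops growing and blocks shrink in width; the recurrence and your argument still go through with the index set $1:\ell$ replaced by $1:\rank(\bm{\Omega})$, as the paper notes in a footnote.
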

% \begin{proof}
%     To prove the first result, it suffices to consider $f(x) = x^{j}$ for $j\leq s-1$.
%     Note that $\range(\bm{A}^j \bm{\Omega})\subset \range(\bm{Q}_s)$.
%     Thus, using \eqref{eqn:block_krylov} and $\bm{Q}_s^\T \bm{A} \bm{Q}_s = \bm{T}_s$ we have
%     \[
%     \bm{A}^{j}\bm{\Omega}
%     = \bm{Q}_s\bm{Q}_s^\T \bm{A}^j \bm{\Omega}
%     = \bm{Q}_s\bm{Q}_s^\T \bm{A} \bm{Q}_s \cdots \bm{Q}_s^\T \bm{A}\bm{Q}_s\bm{Q}_s^\T \bm{\Omega}
%     = \bm{Q}_s (\bm{T}_s^j)_{:,\ell} \bm{R}_0.
%     \]
%     The second result follows from the first and the fact that $\bm{Q}_s^\T \bm{A} \bm{Q}_s = \bm{T}_s$. 
% \end{proof}
It follows that \cref{eq:lanczos_approximation,eq:lanczos_approximation_qf} are good approximations if $f$ is well approximated by polynomials.\footnote{We note that for block-size $\ell>1$, the Krylov subspace is not equivalent to $\cup\{\range(p(\bm{A}) \bm{\Omega}) : p \in \mathbb{P}_{s-1}\}$, and bounds based on best approximation may be pessimistic due to this fact.
In fact, deriving stronger bounds is an active area of research; see e.g. \cite{chen_greenbaum_musco_musco,frommer_schweitzer_guttel,frommer_lund_szyld_20,frommer_schweitzer,frommer_simoncini, hochbruck_lubich}. 
% However, in this work we will stick with this simple and well-known bound.
}
In particular, one can obtain bounds in terms of the best polynomial approximation to $f$ on $[\lambda_{\min},\lambda_{\max}]$ \cite{Saad:1992,OrecchiaSachdevaVishnoi:2012,Amsel:2024}.

\subsection{The randomized SVD for matrix functions}
The randomized SVD \cite{rsvd} is a simple and efficient algorithm to compute low-rank approximations of matrices that admit accurate low-rank approximations. The basic idea behind the randomized SVD is that if $\bm{\Omega}$ is a standard Gaussian random matrix, i.e. the entries in $\bm{\Omega}$ are independent identically distributed $N(0,1)$ random variables, then $\range(\bm{B} \bm{\Omega})$ should be \rev{a} reasonable approximation to the range of $\bm{B}$'s top singular vectors. Hence, projecting $\bm{B}$ onto $\range(\bm{B}\bm{\Omega})$ should yield a good approximation to $\bm{B}$. \Cref{alg:rsvd} implements the randomized SVD on a symmetric matrix $\bm{B}$. %\David{I added a comment about "to truncate or not to truncate?"}
\rev{If $\bm{\Omega}$ has $\ell \geq k$ columns, the} algorithm returns either the rank $\ell \geq k$ approximation $\bm{W}\bm{X} \bm{W}^\T$ or the rank $k$ approximation $\bm{W}\llbracket\bm{X}\rrbracket_k \bm{W}^\T$, depending on the needs of the user.

\begin{algorithm}
\caption{Randomized SVD}
\label{alg:rsvd}
\textbf{input:} Symmetric $\bm{B} \in \mathbb{R}^{n \times n}$. Rank $k$. \rev{Block-size $\ell\geq k$}. \\
\textbf{output:} Low-rank approximation to $\bm{B}$: $\bm{W} \bm{X} \bm{W}^{\T}$ or $\bm{W} \llbracket \bm{X} \rrbracket_k \bm{W}^{\T}$
\begin{algorithmic}[1]
    \State Sample a standard Gaussian $n \times \ell $ matrix $\bm{\Omega}$.
    \State Compute $\bm{K} = \bm{B} \bm{\Omega}$.\label{line:K_rsvd}
    \State Compute an orthonormal basis $\bm{W}$ for $\range(\bm{K})$.\label{line:V}
    \State Compute $\bm{X} = \bm{W}^{\T} \bm{B} \bm{W}$. \label{line:X_rsvd}
    \State \Return $\bm{W} \bm{X} \bm{W}^{\T}$ or $\bm{W} \llbracket \bm{X} \rrbracket_k \bm{W}^{\T}$
\end{algorithmic}
\end{algorithm}

Typically, the dominant cost of \Cref{alg:rsvd} is \rev{the} computation of matvecs \rev{with} $\bm{B}$. We require $2\ell$ such matvecs: $\ell$ in \cref{line:K_rsvd} with $\bm{\Omega}$ and $\ell$ in \cref{line:X_rsvd} with $\bm{W}$. When \Cref{alg:rsvd} is applied to a matrix function $\bm{B} = f(\bm{A})$ these matvecs cannot be performed exactly, but need to be approximated using, for example, the block\rev{-}Lanczos method discussed in the previous section.
\Cref{alg:rsvd_matfun} implements the randomized SVD applied to $f(\bm{A})$ with approximate matvecs using the block\rev{-}Lanczos method. 
The cost is now $(s+r)\ell$ matvecs with $\bm{A}$, where $s$ and $r$ should be set sufficiently large so that the approximations \cref{eq:lanczos_approximation,eq:lanczos_approximation_qf} are accurate.

\begin{algorithm}
\caption{Randomized SVD on a matrix function $f(\bm{A})$}
\label{alg:rsvd_matfun}
\textbf{input:} Symmetric matrix $\bm{A} \in \mathbb{R}^{n \times n}$. Rank $k$. \rev{Block-size $\ell \geq k$}. Matrix function $f: \mathbb{R} \to \mathbb{R}$. Accuracy parameters $s$ and $r$. \\
\textbf{output:} Low-rank approximation to $f(\bm{A})$: $\bm{W}\bm{X} \bm{W}^\T$ or $\bm{W}\llbracket \bm{X}\rrbracket_k \bm{W}^\T$
\begin{algorithmic}[1]
    \State Sample a standard Gaussian $n \times \ell $ matrix $\bm{\Omega}$.
    \State Run \Cref{alg:block_lanczos} for $s$ iterations to obtain an orthonormal basis $\bm{Q}_s$ for $\mathcal{K}_s(\bm{A},\bm{\Omega})$, a block tridiagonal matrix $\bm{T}_s$ and an upper triangular matrix $\bm{R}_0$.
    \State Compute the approximation $\bm{K} = \bm{Q}_s f(\bm{T}_s)_{:,1:\ell} \bm{R}_0 \approx f(\bm{A}) \bm{\Omega}$.
    \State Compute an orthonormal basis $\bm{W}$ for $\range(\bm{K})$. 
    \State Run \Cref{alg:block_lanczos} for $r$ iterations with starting block $\bm{W}$ to obtain the block tridiagonal matrix $\widetilde{\bm{T}}_r$. 
    \State Compute the approximation $\bm{X} = f(\widetilde{\bm{T}}_r)_{1:\ell,1:\ell} \approx \bm{W}^\T f(\bm{A}) \bm{W}$.\label{line:X}
    \State \Return $\bm{W} \bm{X} \bm{W}^{\T}$ or $\bm{W} \llbracket \bm{X} \rrbracket_k \bm{W}^{\T}$
\end{algorithmic}
\end{algorithm}

\subsection{Krylov-aware low-rank approximation}
A key observation in \cite{chen_hallman_23} was that $\range(\bm{W}) \subseteq \range(\bm{Q}_s)$, where $\bm{W}$ and $\bm{Q}_s$ are as in \Cref{alg:rsvd_matfun}. Therefore, by \cite[Lemma 3.3]{funnystrom2} one has
\begin{align*}
    \|f(\bm{A}) - \bm{Q}_s \bm{Q}_s^{\T} f(\bm{A})\bm{Q}_s \bm{Q}_s^{\T}\|_\F &\leq \|f(\bm{A}) - \bm{W} \bm{W}^{\T} f(\bm{A})\bm{W} \bm{W}^{\T}\|_\F,\\
    \|f(\bm{A}) - \bm{Q}_s \llbracket \bm{Q}_s^{\T} f(\bm{A})\bm{Q}_s \rrbracket_k \bm{Q}_s^{\T}\|_\F &\leq \|f(\bm{A}) - \bm{W} \llbracket \bm{W}^{\T} f(\bm{A})\bm{W} \rrbracket_k \bm{W}^{\T}\|_\F.
\end{align*}
Hence, assuming that the quadratic form $\bm{Q}_s^\T f(\bm{A}) \bm{Q}_s$ can be computed accurately, the naive implementation of the randomized SVD outlined in \Cref{alg:rsvd_matfun} will yield a worse error than using $\bm{Q}_s\bm{Q}_s^\T f(\bm{A}) \bm{Q}_s \bm{Q}_s^\T$ as an approximation to $f(\bm{A})$.%, and $\bm{Q}_s$ is a basis that is already computed. 

Since $\bm{Q}_s$ could have as many as $s\ell$ columns, an apparent downside to this approach is that approximating $f(\bm{A})\bm{Q}_s$ might require $rs\ell$ matvecs with $\bm{A}$ if we use $r$ iterations of the \rev{block-Lanczos} method (\Cref{alg:block_lanczos}) to approximately compute each matvec with $f(\bm{A})$.
The key observation which allows Krylov-aware algorithms to be implemented efficiently is the following:
\begin{lemma}[\rev{see e.g.} {\cite[Section 3]{chen_hallman_23}}]\label{lemma:krylovkrylov}
    Suppose that $\bm{Q}_s$ is the output of \Cref{alg:block_lanczos} with starting block $\bm{\Omega}$ and $s$ iterations. Then, running $r+1$ iterations of \Cref{alg:block_lanczos} with starting block $\bm{Q}_s$ yields the same output as running $s+r$ iterations of \Cref{alg:block_lanczos} with starting block $\bm{\Omega}$. 
    %Suppose $\bm{Q}_s$ is a basis for $\mathcal{K}_s(\bm{A},\bm{\Omega})$.
    \end{lemma}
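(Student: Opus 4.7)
The plan is to reduce the claim to a single Krylov-subspace identity and then invoke standard properties of block-Lanczos. Specifically, the key identity to establish is
\[
\mathcal{K}_{r+1}(\bm{A}, \bm{Q}_s) \;=\; \mathcal{K}_{s+r}(\bm{A}, \bm{\Omega}),
\]
from which the matching of outputs will follow because both algorithms produce an orthonormal basis (and corresponding block-tridiagonal projection of $\bm{A}$) of the same subspace, starting from matching initial blocks.

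To establish the identity, I would use that $\bm{Q}_s$ is an orthonormal basis of $\mathcal{K}_s(\bm{A}, \bm{\Omega})$, so its columns span $\range\!\left(\begin{bmatrix}\bm{\Omega} & \bm{A}\bm{\Omega} & \cdots & \bm{A}^{s-1}\bm{\Omega}\end{bmatrix}\right)$. Multiplying through by $\bm{A}^i$ and taking the union over $i = 0, 1, \ldots, r$ yields
\[
\range\!\left(\begin{bmatrix}\bm{Q}_s & \bm{A}\bm{Q}_s & \cdots & \bm{A}^r \bm{Q}_s\end{bmatrix}\right) = \range\!\left(\begin{bmatrix}\bm{\Omega} & \bm{A}\bm{\Omega} & \cdots & \bm{A}^{s+r-1}\bm{\Omega}\end{bmatrix}\right),
\]
which is precisely the desired identity. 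The output-matching step is then essentially a consequence of the fact that $\bm{Q}_s$ itself is the concatenation of the first $s$ blocks produced by the original $(s+r)$-iteration run; initializing the second run with $\bm{V}_0' = \bm{Q}_s$ therefore reproduces those blocks, and the three-term recurrence together with the subspace identity forces the subsequent blocks to span the same successive ``tail'' subspaces produced by the first run. In particular, both runs produce the same orthogonal projector $\bm{Q}\bm{Q}^{\T}$ and the same projected action of $\bm{A}$ on that subspace.

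The main obstacle is the bookkeeping around block sizes: the first run produces $s+r$ blocks of width at most $\ell$, while the second produces $r+1$ blocks whose first block has width $s\ell$, so literal equality of the individual $\bm{V}_i$ and $\bm{T}$ entries cannot hold. I would interpret ``same output'' at the level of $\range(\bm{Q})$ and the Galerkin projection $\bm{Q}^{\T} \bm{A} \bm{Q}$ -- the only quantities that actually enter the downstream approximation $\bm{Q} f(\bm{T}) \bm{Q}^{\T}$ -- in which case the subspace identity combined with the three-term recurrence does the rest. This reblocking is the only real subtlety; the substantive mathematical content of the lemma lies entirely in the Krylov-subspace identity above.
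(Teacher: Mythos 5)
Your proposal is correct and follows essentially the same route as the paper, which likewise reduces the lemma to the identity $\mathcal{K}_{r+1}(\bm{A},\bm{Q}_s)=\mathcal{K}_{s+r}(\bm{A},\bm{\Omega})$ by writing out $\range\bigl(\begin{bmatrix}\bm{Q}_s & \bm{A}\bm{Q}_s & \cdots & \bm{A}^r\bm{Q}_s\end{bmatrix}\bigr)$ and leaves the output-matching step informal (deferring to the cited reference). Your added remark on the reblocking subtlety — that ``same output'' should be read at the level of the spanned subspace and the Galerkin projection $\bm{Q}^{\T}\bm{A}\bm{Q}$ rather than literal equality of the individual blocks — is a legitimate clarification that the paper glosses over, and it is exactly what is needed for the downstream use in \Cref{lemma:2_times_polynomial_approx}.
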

The insight behind \Cref{lemma:krylovkrylov} is that, since $\bm{Q}_s$ is a basis for the Krylov subspace $\mathcal{K}_s(\bm{A},\bm{\Omega})$, we have $\mathcal{K}_{r+1}(\bm{A},\bm{Q}_s) = \mathcal{K}_{s+r}(\bm{A},\bm{\Omega})$. This fact can be verified by explicitly writing out $\mathcal{K}_{r+1}(\bm{A},\bm{Q}_s)$:
\begin{align*}
    \mathcal{K}_{r+1}(\bm{A},\bm{Q}_s) &= \range\big( \big[ \bm{Q}_s \,\bm{A}\bm{Q}_s \, \cdots \, \bm{A}^r\bm{Q}_s \big]\big)\\
    &=  \range\big( \big[\bm{\Omega} \,\hspace{5pt}\bm{A}\bm{\Omega} \, \hspace{5pt}\cdots\hspace{5pt} \, \bm{A}^r\bm{\Omega}\\% First line
    &\hspace{2.2cm} \bm{A} \bm{\Omega} \,\hspace{5pt} \bm{A}^2\bm{\Omega} \, \cdots \, \bm{A}^{r+1} \bm{\Omega}\\
    & \hspace{3.5cm} \ddots\\
    & \hspace{3.5cm} \bm{A}^r \bm{\Omega} \,\hspace{5pt}\bm{A}^{r+1} \bm{\Omega} \,\hspace{5pt} \cdots \, \bm{A}^{s+r-1} \bm{\Omega} \big] \big) = \mathcal{K}_{s+r}(\bm{A},\bm{\Omega}). 
\end{align*}
In addition to the work of Chen and Hallman, this observation was recently used to analyze randomized Krylov methods for low-rank approximation \cite{MeyerMuscoMusco:2024,moderateblocksize} and as an algorithmic tool in a randomized variant of the block conjugate gradient algorithm \cite{BCGPreconditioning} and implementations of low-rank approximation algorithms \cite{tropp2023randomized}.
    
%     Then,
%     \[
%     \mathcal{K}_{s+r}(\bm{A},\bm{\Omega}) 
%     = \mathcal{K}_{r+1}(\bm{A},\bm{Q}_s).
%     \]
% % Moreover, \cref{alg:line:qr} in \cref{alg:block_lanczos} can be implemented in such a way the output of \cref{alg:block_lanczos} on $\bm{A},\bm{\Omega}$ run for $q=s+r$ iteration is identical to the output of \cref{alg:block_lanczos} run on $\bm{A},\bm{Q}_s$ for $r+1$ iterations.
% \end{lemma}
% \begin{proof}
% Without loss of generality, we can take $\bm{Q}_s = [\bm{\Omega}~\bm{A}\bm{\Omega}~\cdots~\bm{A}^{s-1}\bm{\Omega}]$.
% Then,
% \begin{align*}
%     \mathcal{K}_{r+1}(\bm{A},\bm{Q}_s)
%     &= \range\left(\begin{bmatrix} \bm{Q}_s& \bm{A}\bm{Q}_s & \cdots & \bm{A}^{r} \bm{Q}_s \end{bmatrix}\right)
%     \\&= \range\left(\left[\begin{array}{cccc}
%     \bm{\Omega}& \bm{A}\bm{\Omega} & \cdots & \bm{A}^{s-1} \bm{\Omega} 
%     \end{array}\right.\right. 
%     \\&\hspace{6em}\begin{array}{cccc}
%     \bm{A}\bm{\Omega}& \bm{A}^2\bm{\Omega} & \cdots & \bm{A}^{s} \bm{\Omega} \end{array}
%     \\&\hspace{7em}
%     \left.\left.\begin{array}{cccc}
%     \bm{A}^{r}\bm{\Omega}& \bm{A}^{r+1}\bm{\Omega} & \cdots & \bm{A}^{s+r-1} \bm{\Omega} \end{array}\right]\right)
%     \\&=
%     \range\left(\begin{bmatrix} \bm{\Omega}& \bm{A}\bm{\Omega} & \cdots & \bm{A}^{s+r-1} \bm{\Omega}\end{bmatrix}\right)
%     = \mathcal{K}_{s+r}(\bm{A},\bm{\Omega}).
%     \qedhere
% \end{align*}
% \end{proof}
% \noindent
Notably, \Cref{lemma:krylovkrylov} enables us to approximate $\bm{Q}_s^\T f(\bm{A})\bm{Q}_s$ with just $r\ell$  additional matvecs with $\bm{A}$, even though $\bm{Q}_s$ has $s\ell$ columns! Hence, approximately computing $\bm{Q}_s^\T f(\bm{A}) \bm{Q}_s$ is essentially no more expensive, in terms of the number of matvecs with $\bm{A}$, than approximating $\bm{W}^\T f(\bm{A}) \bm{W}$, as done in \Cref{alg:rsvd_matfun}~\cref{line:X}. As a consequence of \Cref{lemma:krylovkrylov}, we have the following result. 
\begin{lemma}\label{lemma:2_times_polynomial_approx}
Let $\lambda_{\max}$ and $\lambda_{\min}$ denote the largest and smallest eigenvalue of $\bm{A}$. Let $q = s+r$ and let $\bm{T}_q$ and $\bm{Q}_s$ be computed using \Cref{alg:block_lanczos}. Then, 
\begin{align*}
    &\|\bm{Q}_s^\T f(\bm{A})\bm{Q}_s - f(\bm{T}_q)_{1:d_{s,\ell},1:d_{s,\ell}}\|_\F \leq 2\sqrt{\ell s} \inf\limits_{p \in \mathbb{P}_{2r+1}}\|f(x)-p(x)\|_{L^{\infty}([\lambda_{\min},\lambda_{\max}])}.
\end{align*}
\end{lemma}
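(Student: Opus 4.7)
The plan is to combine \Cref{lemma:krylovkrylov} and \Cref{lemma:block_lanczos_exact} to reinterpret $f(\bm{T}_q)_{1:d_{s,\ell},1:d_{s,\ell}}$ as the block-Gauss-quadrature (quadratic form) approximation to $\bm{Q}_s^\T f(\bm{A}) \bm{Q}_s$ obtained by running the block-Lanczos method with \emph{starting block $\bm{Q}_s$ itself}. Specifically, by \Cref{lemma:krylovkrylov}, running $r+1$ iterations of \Cref{alg:block_lanczos} with starting block $\bm{Q}_s$ produces the same tridiagonal matrix $\bm{T}_q$. Since $\bm{Q}_s$ is already orthonormal, the initial QR step yields $\bm{V}_0 = \bm{Q}_s$ and $\bm{R}_0 = \bm{I}_{d_{s,\ell}}$, so \cref{eq:lanczos_approximation_qf} applied in this setting reads
\begin{equation*}
\bm{Q}_s^\T f(\bm{A}) \bm{Q}_s \;\approx\; f(\bm{T}_q)_{1:d_{s,\ell},1:d_{s,\ell}}.
\end{equation*}
By \Cref{lemma:block_lanczos_exact} (with ``$s$'' in that lemma set to $r+1$), this approximation is exact whenever $f \in \mathbb{P}_{2r+1}$.

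With exactness in hand, I would use the standard polynomial-approximation detour: for any $p \in \mathbb{P}_{2r+1}$,
\begin{equation*}
\bm{Q}_s^\T f(\bm{A}) \bm{Q}_s - f(\bm{T}_q)_{1:d_{s,\ell},1:d_{s,\ell}}
= \bm{Q}_s^\T (f-p)(\bm{A}) \bm{Q}_s - (f-p)(\bm{T}_q)_{1:d_{s,\ell},1:d_{s,\ell}},
\end{equation*}
and bound the two terms separately with the triangle inequality.

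For each term, the strategy is to pass from the Frobenius norm to the spectral norm using $\|\bm{M}\|_\F \le \sqrt{\rank(\bm{M})}\,\|\bm{M}\|_2$, noting that both matrices have at most $d_{s,\ell}$ rows (and columns). For the first term, unitary invariance gives $\|\bm{Q}_s^\T (f-p)(\bm{A}) \bm{Q}_s\|_2 \le \|(f-p)(\bm{A})\|_2 = \|f-p\|_{L^\infty([\lambda_{\min},\lambda_{\max}])}$. For the second term, since $\bm{T}_q = \bm{Q}_q^\T \bm{A} \bm{Q}_q$ is symmetric with eigenvalues (Ritz values) contained in $[\lambda_{\min},\lambda_{\max}]$, we get $\|(f-p)(\bm{T}_q)_{1:d_{s,\ell},1:d_{s,\ell}}\|_2 \le \|(f-p)(\bm{T}_q)\|_2 \le \|f-p\|_{L^\infty([\lambda_{\min},\lambda_{\max}])}$. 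Adding the two contributions, using $d_{s,\ell} \le s\ell$, and taking the infimum over $p \in \mathbb{P}_{2r+1}$ yields the stated bound $2\sqrt{\ell s}\,\inf_{p \in \mathbb{P}_{2r+1}}\|f-p\|_{L^\infty([\lambda_{\min},\lambda_{\max}])}$.

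The main conceptual obstacle is the first step, namely correctly applying \Cref{lemma:block_lanczos_exact} with $\bm{Q}_s$ in the role of $\bm{\Omega}$; the point is that \Cref{lemma:krylovkrylov} guarantees that the block-tridiagonal matrix produced by this ``nested'' Krylov run is literally $\bm{T}_q$, and because the new starting block is already orthonormal, the leading $d_{s,\ell} \times d_{s,\ell}$ principal submatrix of $f(\bm{T}_q)$ is exactly the block-Lanczos quadratic-form approximation to $\bm{Q}_s^\T f(\bm{A}) \bm{Q}_s$. Once this identification is made, the remainder is a routine $\|\cdot\|_\F \le \sqrt{\rank}\,\|\cdot\|_2$ argument, and no further delicate Krylov analysis is required.
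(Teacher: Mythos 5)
Your proposal is correct and follows essentially the same route as the paper: both establish exactness of $p(\bm{T}_q)_{1:d_{s,\ell},1:d_{s,\ell}} = \bm{Q}_s^\T p(\bm{A})\bm{Q}_s$ for $p \in \mathbb{P}_{2r+1}$ via \Cref{lemma:krylovkrylov} and \Cref{lemma:block_lanczos_exact}, then apply the triangle inequality to $(f-p)$ and bound each piece by $\|f-p\|_{L^\infty([\lambda_{\min},\lambda_{\max}])}$ using that the spectra of $\bm{A}$ and $\bm{T}_q$ lie in that interval. The only cosmetic difference is that you extract the $\sqrt{\ell s}$ factor via $\|\bm{M}\|_\F \le \sqrt{\rank(\bm{M})}\,\|\bm{M}\|_2$ with $d_{s,\ell}\le s\ell$, whereas the paper uses $\|\bm{Q}_s\|_\F \le \sqrt{\ell s}$ together with strong submultiplicativity; both give the same constant.
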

\begin{proof}
%By \cite[Lemma 3.1]{chen_hallman_23} we know that for any polynomial $p \in \mathbb{P}_{2r+1}$ we have $\bm{Q}_s^\T p(\bm{A}) \bm{Q}_{s} = p(\bm{T}_q)_{1:d_s,1:d_s}$. 
By \Cref{lemma:block_lanczos_exact,lemma:krylovkrylov} we know that for any polynomial $p \in \mathbb{P}_{2r+1}$ we have $\bm{Q}_s^\T p(\bm{A}) \bm{Q}_{s} = p(\bm{T}_q)_{1:d_{s,\ell},1:d_{s,\ell}}$.
Therefore, since $\|\bm{Q}_s\|_\F \leq \sqrt{\ell s}$ and $\|\bm{Q}_s\|_2 \leq 1$ we have
\begin{align*}
    \hspace{5em}&\hspace{-5em}\|\bm{Q}_s^\T f(\bm{A})\bm{Q}_s - f(\bm{T}_q)_{1:d_{s,\ell},1:d_{s,\ell}}\|_\F
    \\&= \|\bm{Q}_s^\T f(\bm{A}) \bm{Q}_s - p(\bm{T}_q)_{1:d_{s,\ell},1:d_{s,\ell}} + p(\bm{T}_q)_{1:d_{s,\ell},1:d_{s,\ell}} - f(\bm{T}_q)_{1:d_{s,\ell},1:d_{s,\ell}} \|_\F 
    \\&\leq \|\bm{Q}_s^\T f(\bm{A}) \bm{Q}_s - \bm{Q}_s^\T p(\bm{A}) \bm{Q}_s \|_\F +  \|(p(\bm{T}_q) - f(\bm{T}_q) )_{1:d_{s,\ell},1:d_{s,\ell}} \|_\F 
    \\&\leq \sqrt{\ell s} \left( \|f(\bm{A}) - p(\bm{A}) \|_2 +  \| p(\bm{T}_q)  - f(\bm{T}_q) \|_2\right)
    \\&\leq 2 \sqrt{\ell s} \|f(x) - p(x)\|_{L^{\infty}([\lambda_{\min},\lambda_{\max}])},
\end{align*}
where the last inequality is due to the fact that the spectrum of $\bm{T}_q$ is contained in $[\lambda_{\min},\lambda_{\max}]$.
Optimizing over $p\in\mathbb{P}_{2r+1}$ gives the result.
%Applying \cref{thm:lanczos_exact} with $\bm{Q}_s$ as the starting block and running block-Lanczos for $r+1$ iterations we have that, for any $p\in\mathbb{P}_{2r+1}$, $\bm{Q}_s^\T p(\bm{A}) \bm{Q}_s = p(\bm{T})_{1:d_s,1:d_s}$.
%Therefore,
%\begin{align*}
    %\varepsilon_r
    %&= \|\bm{Q}_1^\T f(\bm{A}) \bm{Q}_1 - \bm{Q}_1^\T p(\bm{T}) \bm{Q}_1 + p(\bm{T})_{1:d_s,1:d_s} - f(\bm{T})_{1:d_s,1:d_s} \|_\F 
    %\\&\leq \|\bm{Q}_1^\T f(\bm{A}) \bm{Q}_1 - \bm{Q}_1^\T p(\bm{A}) \bm{Q}_1 \|_\F +  \|(p(\bm{T}) - f(\bm{T}) )_{1:d_s,1:d_s} \|_\F 
    %\\&\leq \sqrt{d_s} \left( \|f(\bm{A}) - p(\bm{A}) \|_2 +  \| p(\bm{T}) \bm{E}_1 - f(\bm{T}) \|_2\right)
    %\\&\leq 2 \sqrt{d_s} \|f(x) - p(x)\|_{L^{\infty}([\lambda_{\min},\lambda_{\max}])},
%\end{align*}
%where the last inequality is due to the fact that the spectrum of $\bm{T}$ is contained in $[\lambda_{\min},\lambda_{\max}]$.
%Optimizing over $p\in\mathbb{P}_{2r-1}$ gives the result.
\end{proof}

We can now present the Krylov-aware low-rank approximation algorithm\rev{, which was presented in \cite[Section 3]{chen_hallman_23}}; see \cref{alg:krylow}. The total number of matvecs with $\bm{A}$ is $(s+r)\ell$, the same as \cref{alg:rsvd_matfun}.\footnote{\rev{Note that \cref{alg:krylow} requires computing a matrix-function of size $(s+r) \ell \times (s+r)\ell$, which is more expensive than computing the two matrix functions of size $s \ell \times s \ell$ and $r\ell \times r \ell$ in \Cref{alg:rsvd_matfun}. However, if $s,r \ll n$, the cost of these matrix-function evaluations is not the dominant part of the computation.}} 
However, as noted above, \cref{alg:krylow} (approximately) projects $f(\bm{A})$ onto a higher dimensional \rev{subspace}, ideally obtaining a better approximation.
%The complete algorithm is shown in \cref{alg:krylov_aware_polynomial}, and the total number of matrix-vector products used is $(s+r-1)\ell$, the same as \cref{alg:low-rank_polynomial}. 

%However, the above mentioned fact assumes that one can compute $\bm{Q}_q^T f(\bm{A}) \bm{Q}_q$ accurately. Since $\bm{Q}_q$ has more columns than $\bm{W}$, the potential benefit from using $\bm{Q}_q$ instead of $\bm{W}$ might be lost by the fact that approximating $\bm{Q}_q^\T f(\bm{A})\bm{Q}_q$ is more expensive than $\bm{W}^\T f(\bm{A})\bm{W}$. However, due to the special structure of $\bm{Q}_q$ one can show that approximating $\bm{Q}_q^\T f(\bm{A})\bm{Q}_q$ essentially comes at the same cost as approximating $\bm{W}^\T f(\bm{A})\bm{W}$, even if $\bm{Q}_q$ has more columns than $\bm{W}$; see \cite[Lemma 3.1]{chen_hallman_23}.

\begin{algorithm}
\caption{Krylov-aware low-rank approximation}
\label{alg:krylow}
\textbf{input:} Symmetric $\bm{A} \in \mathbb{R}^{n \times n}$. Rank $k$. %Oversampling parameter $\ell -k$. 
\rev{Block-size $\ell \geq k$}. 
Matrix function $f: \mathbb{R} \to \mathbb{R}$. Number of iterations $q = s + r$.\\
\textbf{output:} Low-rank approximation to $f(\bm{A})$: $\bm{Q}_s \bm{X} \bm{Q}_s^\T$ or $\bm{Q}_s \llbracket \bm{X} \rrbracket_k  \bm{Q}_s^\T$%$\bm{Q}_s \llbracket f(\bm{T}_q)_{1:d_s,1:d_s} \rrbracket_k  \bm{Q}_s^\T$. 
\begin{algorithmic}[1]
    \State Sample a standard Gaussian $n \times \ell $ matrix $\bm{\Omega}$.
    \State Run \cref{alg:block_lanczos} for $q=s+r$ iterations to obtain an orthonormal basis $\bm{Q}_s$ for $\mathcal{K}_{s}(\bm{A},\bm{\Omega})$ and a block tridiagonal matrix $\bm{T}_q$.
    \State Compute $\bm{X} = f(\bm{T}_q)_{1:d_{s,\ell},1:d_{s,\ell}}$ \rev{where $d_{s,\ell} = \dim(\mathcal{K}_s(\bm{A},\bm{\Omega}))$}. \Comment{$\approx \bm{Q}_s^\T f(\bm{A}) \bm{Q}_s$} 
    %\State Compute the eigenvalue decomposition of $\bm{X} = \bm{V} \bm{D} \bm{V}^\T$, where the eigenvalues in $\bm{D}$ are ordered in descending absolute magnitude. 
    \State \textbf{return} $\textsf{\textup{ALG}}(s,r;f) = \bm{Q}_s \bm{X}\bm{Q}_s^\T$ or $\textsf{\textup{ALG}}_k(s,r;f) = \bm{Q}_s \llbracket \bm{X} \rrbracket_k  \bm{Q}_s^\T$.
\end{algorithmic}
\end{algorithm}
% \begin{remark}
%     \rev{Note that \Cref{alg:krylow} assumes that the starting block $\bm{\Omega}$ has at least $k$ columns. In contrast, Chen and Hallman \cite{chen_hallman_23} recommended using a small block-size, say $\ell = 1,2,$ or $4$. In \Cref{section:singlevector} we will analyze a version of \Cref{alg:krylow} which uses a single-vector starting block, i.e. $\ell = 1$. However, the analysis of this single-vector version relies on the analysis of the block-version in \Cref{alg:krylow}. We therefore defer the discussion to \Cref{section:singlevector}. }%\rev{The computational cost of \Cref{alg:krylow} is $O(\nnz{\bm{A}} q \ell + nq^2 \ell^2)$ operations. For dense matrices $O(\nnz{\bm{A}} q \ell)$ term dominates, and the overall computational cost scales linearly with the block size $\ell$. In contrast, for sufficiently sparse matrices the $O(ns^2 \ell^2)$ term may become dominant, introducing a quadratic dependence on the block size $\ell$. In \Cref{section:single vector} we present and analyze a single vector version of \Cref{alg:krylow}, which uses block size $\ell = 1$.}
    
% \end{remark}

\rev{\Cref{alg:krylow} requires $\ell(s+r)$ matvecs with $\bm{A}$ and $O(n(s+r)^2 \ell^2)$ additional operations, assuming full reorthogonalization is performed in \Cref{alg:block_lanczos}. For dense matrices, the cost is dominated by matvecs with $\bm{A}$, resulting in an overall complexity of $O(n^2(s+r)\ell)$ operations. In contrast, when $\bm{A}$ is sparse, the reorthogonalization becomes the dominant computational cost and scales quadratically with the block-size $\ell$. While \Cref{alg:rsvd} and \Cref{alg:rsvd_matfun} require the block-size $\ell$ to be larger than $k$ to be able to return a rank-$k$ approximation, \Cref{alg:krylow} can return a rank-$k$ approximation even if $\ell < k$, provided $q\ell \geq k$. Recent work has shown that single-vector methods (i.e., setting $\ell = 1$) can often produce a more efficient low-rank approximation at the same computational cost. Furthermore, Chen and Hallman \cite{chen_hallman_23} recommended using a small block-size, say $\ell = 1,2,$ or $4$. Later in this work we will consider a version of \Cref{alg:krylow} which uses a single-vector starting block, i.e. $\ell = 1$. However, its analysis relies on the analysis of the block-version in \Cref{alg:krylow}, which restricts $\ell \geq k$. We therefore defer the discussion on a single-vector version of \Cref{alg:krylow} to \Cref{section:singlevector}. Moreover, while single-vector methods often produce more accurate approximations at the same number of arithmetic operations, large-block methods come with other advantages, such as being more parallelizable. }

We conclude by noting that the function $f$ in \cref{alg:krylow} does not need to be fixed; one can compute a low-rank approximation for many different functions $f$ at minimal additional cost. 

\begin{remark}
    \rev{Modern methods for eigenpair approximation often employ restarting techniques to reduce memory requirements. Furthermore, Chen and Hallman does discuss a restarting strategy \cite[Section 4.2]{chen_hallman_23} based on classical techniques \cite{sorensen,baglama}. While restarting techniques are well studied in the context of eigenpair approximations, they are less understood in the context of low-rank approximation. In this work, the theoretical analysis will not focus on restarting techniques.}
\end{remark}

% \rev{\subsection{Single-vector Krylov-aware low-rank approximation}
% While the use for block-Lanczos methods for low-rank approximations of $\bm{A}$ has been a well-studied problem \cite{MM15,tropp2023randomized}, there has been recent work on the use of single-vector Lanczos for low-rank approximation; see \cite{MeyerMuscoMusco:2024}. The key insight behind the analysis of these methods is that 
% When $\bm{A}$ is dense, the dominant computational cost comes from the number of matvecs with $\bm{A}$. On the other hand, for sparse matrices, the cost of reorthogonalization accounts for dominant computational cost. Furthermore,  
% What I want to say:
% \begin{enumerate}
%     \item When $\bm{A}$ is dense, matvecs are the dominant costs. When $\bm{A}$ is sparse, reortogonalization costs are dominant. 
%     \item To reduce the cost one can run with smaller block. Also work on that this is better for low rank approximation. 
% \end{enumerate}}
\section{Error bounds}
In this section, we establish error bounds for \Cref{alg:krylow}. We break the analysis into two parts. First, in \Cref{section:inexact_projections}, we derive general error bounds for approximations to $f(\bm{A})$ when projections $\bm{Q} \bm{Q}^\T f(\bm{A}) \bm{Q} \bm{Q}^\T$ cannot be computed exactly. In \Cref{section:structural} we provide structural bounds for the errors $\|f(\bm{A})- \bm{Q}_s \bm{Q}_s^\T f(\bm{A}) \bm{Q}_s \bm{Q}_s^\T\|_{\F}$ and $\|f(\bm{A})- \bm{Q}_s \llbracket \bm{Q}_s^\T f(\bm{A}) \bm{Q}_s\rrbracket_k \bm{Q}_s^\T\|_{\F}$ that hold with probability $1$, and in \Cref{section:probabilistic} we derive the corresponding probabilistic bounds. Next, in \Cref{section:krylow} we combine the results from \Cref{section:inexact_projections,section:structural,section:probabilistic} to derive end-to-end error bounds for \Cref{alg:krylow}, which involves approximate projection onto $\bm{Q}_s \bm{Q}_s^\T$. %Finally, in \Cref{section:exponential} we apply our bounds to the matrix exponential as an illustrative example.
\subsection{Error bounds for inexact projections}\label{section:inexact_projections}
In this section we will derive error bounds for $\|f(\bm{A}) - \bm{Q} \bm{X} \bm{Q}^\T\|_{\F}$ and $\|f(\bm{A}) - \bm{Q} \llbracket \bm{X} \rrbracket_k \bm{Q}^\T\|_{\F}$ where $\bm{Q}$ is \textit{any} orthonormal basis and $\bm{X}$ is \textit{any} matrix. By \cite[Lemma 3.3]{funnystrom2} we know that the optimal choice of $\bm{X}$ is $\bm{X} = \bm{Q}^\T f(\bm{A}) \bm{Q}$. However, since $\bm{Q}^\T f(\bm{A}) \bm{Q}$ can only be computed approximately, we need to show that the errors $\|f(\bm{A}) - \bm{Q} \bm{Q}^\T f(\bm{A}) \bm{Q} \bm{Q}\|_{\F}$ and $\|f(\bm{A}) - \bm{Q} \llbracket \bm{Q}^\T f(\bm{A}) \bm{Q} \rrbracket_k \bm{Q}\|_{\F}$ are robust against perturbations in $\bm{Q}^\T f(\bm{A}) \bm{Q}$. \rev{Robustness of the former follows directly from the Pythagorean theorem
\begin{equation}\label{eq:robustness1}
        \|f(\bm{A}) - \bm{Q} \bm{X}\bm{Q}^\T\|_{\F}^2 = \|f(\bm{A}) - \bm{Q}\bm{Q}^\T f(\bm{A})\bm{Q}\bm{Q}^\T\|_{\F}^2 + \|\bm{Q}^\T f(\bm{A})\bm{Q} - \bm{X}\|_{\F}^2.
    \end{equation}
    Establishing robustness of the latter requires more care, especially since rank-$k$ truncations are not Lipschitz continuous. However, we show in \Cref{theorem:robust} that it is indeed robust.}
%Note that we consider matrix functions $f(\bm{A})$ when we state the theorem, since this is the focus of this paper. However, \Cref{theorem:robust} remains true for arbitrary square matrices. 
\begin{theorem}\label{theorem:robust}
    Given an orthonormal basis \rev{$\bm{Q} \in \mathbb{R}^{n \times d}$} and an approximation $\rev{\bm{X} \in \mathbb{R}^{d \times d}}$ to $\bm{Q}^\T f(\bm{A}) \bm{Q}$. Then,
    % \begin{equation}\label{eq:robustness1}
    %     \|f(\bm{A}) - \bm{Q} \bm{X}\bm{Q}^\T\|_{\F}^2 = \|f(\bm{A}) - \bm{Q}\bm{Q}^\T f(\bm{A})\bm{Q}\bm{Q}^\T\|_{\F}^2 + \|\bm{Q}^\T f(\bm{A})\bm{Q} - \bm{X}\|_{\F}^2,
    % \end{equation}
    % and
    \begin{equation}\label{eq:robustness2}
        \rev{\|f(\bm{A}) - \bm{Q} \llbracket \bm{X} \rrbracket_k \bm{Q}^\T\|_{\F} \leq \|f(\bm{A}) - \bm{Q}\llbracket\bm{Q}^\T f(\bm{A})\bm{Q} \rrbracket_k\bm{Q}^\T\|_{\F} + 2\|\bm{Q}^\T f(\bm{A})\bm{Q} - \bm{X}\|_{\F}.}
    \end{equation}
\end{theorem}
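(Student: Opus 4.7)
The plan is to reduce the claim, via the Pythagorean identity \eqref{eq:robustness1}, to a purely $d$-dimensional statement about the perturbation of rank-$k$ truncations, and then to handle the (non-Lipschitz) truncation operator with a triangle-inequality chain anchored at the optimality property of $\llbracket\bm{X}\rrbracket_k$.

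Write $\bm{Y} := \bm{Q}^\T f(\bm{A})\bm{Q}$ and let $a := \|f(\bm{A}) - \bm{Q}\bm{Q}^\T f(\bm{A})\bm{Q}\bm{Q}^\T\|_\F$. First, apply \eqref{eq:robustness1} with $\bm{X}$ replaced by the $d\times d$ matrix $\llbracket\bm{X}\rrbracket_k$ and then with $\llbracket\bm{Y}\rrbracket_k$. This yields
\begin{align*}
\|f(\bm{A}) - \bm{Q}\llbracket\bm{X}\rrbracket_k\bm{Q}^\T\|_\F^2 &= a^2 + \|\bm{Y} - \llbracket\bm{X}\rrbracket_k\|_\F^2, \\
\|f(\bm{A}) - \bm{Q}\llbracket\bm{Y}\rrbracket_k\bm{Q}^\T\|_\F^2 &= a^2 + \|\bm{Y} - \llbracket\bm{Y}\rrbracket_k\|_\F^2.
\end{align*}
Since $\sqrt{a^2+t^2}$ is $1$-Lipschitz in $t\geq 0$, and $\|\bm{Y}-\llbracket\bm{X}\rrbracket_k\|_\F \geq \|\bm{Y}-\llbracket\bm{Y}\rrbracket_k\|_\F$ by optimality of $\llbracket\bm{Y}\rrbracket_k$ as the best rank-$k$ approximation to $\bm{Y}$, it suffices to prove the scalar inequality
\begin{equation*}
\|\bm{Y} - \llbracket\bm{X}\rrbracket_k\|_\F \leq \|\bm{Y} - \llbracket\bm{Y}\rrbracket_k\|_\F + 2\|\bm{Y}-\bm{X}\|_\F.
\end{equation*}

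For this last inequality, first bound $\|\bm{Y}-\llbracket\bm{X}\rrbracket_k\|_\F \leq \|\bm{Y}-\bm{X}\|_\F + \|\bm{X}-\llbracket\bm{X}\rrbracket_k\|_\F$ by the triangle inequality. Next, use the optimality of $\llbracket\bm{X}\rrbracket_k$ as the best rank-$k$ approximation to $\bm{X}$ to replace the second term by $\|\bm{X}-\llbracket\bm{Y}\rrbracket_k\|_\F$, which is then bounded by $\|\bm{X}-\bm{Y}\|_\F + \|\bm{Y}-\llbracket\bm{Y}\rrbracket_k\|_\F$ via another triangle inequality. Summing produces the factor of $2$ on $\|\bm{Y}-\bm{X}\|_\F$, closing the argument.

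The main conceptual obstacle is that the truncation map $\bm{X}\mapsto\llbracket\bm{X}\rrbracket_k$ is not Lipschitz continuous, so one cannot directly perturb $\bm{X}$ into $\bm{Y}$ inside the truncation. The workaround is to observe that we do not need Lipschitzness of the truncation itself, only a Lipschitz-type bound on the distance from $\bm{Y}$ to the (closed) set of matrices of rank at most $k$, which is given for free by the Eckart--Young-style optimality of $\llbracket\bm{X}\rrbracket_k$. A routine detail worth flagging is that the Pythagorean step requires the cross term $\mathrm{tr}\bigl((f(\bm{A})-\bm{Q}\bm{Q}^\T f(\bm{A})\bm{Q}\bm{Q}^\T)\bm{Q}(\bm{Y}-\bm{M})\bm{Q}^\T\bigr)$ to vanish, which follows from $\bm{Q}^\T(f(\bm{A})-\bm{Q}\bm{Q}^\T f(\bm{A})\bm{Q}\bm{Q}^\T)\bm{Q}=\bm{0}$; this is implicit in citing \eqref{eq:robustness1}.
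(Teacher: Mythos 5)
Your proof is correct. It shares the same combinatorial skeleton as the paper's argument---two triangle inequalities bracketing a single best-rank-$k$ optimality step, which is where the factor $2$ comes from---but the packaging is genuinely different. The paper (following Tropp et al.) introduces the auxiliary $n\times n$ matrix $\bm{C} = f(\bm{A}) - \bm{Q}\bm{Q}^\T f(\bm{A})\bm{Q}\bm{Q}^\T + \bm{Q}\bm{X}\bm{Q}^\T$, which satisfies $\bm{Q}^\T\bm{C}\bm{Q}=\bm{X}$ and $\|\bm{C}-f(\bm{A})\|_\F = \|\bm{Q}^\T f(\bm{A})\bm{Q}-\bm{X}\|_\F$, and then invokes the lemma of \cite{funnystrom2} that $\bm{Q}\llbracket\bm{Q}^\T\bm{C}\bm{Q}\rrbracket_k\bm{Q}^\T$ is the best rank-$k$ approximation of $\bm{C}$ among matrices with range and co-range in $\range(\bm{Q})$. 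You instead use the Pythagorean identity \eqref{eq:robustness1} to strip off the common term $\|f(\bm{A}) - \bm{Q}\bm{Q}^\T f(\bm{A})\bm{Q}\bm{Q}^\T\|_\F^2$ and reduce the whole claim to the $d\times d$ statement $\|\bm{Y}-\llbracket\bm{X}\rrbracket_k\|_\F \le \|\bm{Y}-\llbracket\bm{Y}\rrbracket_k\|_\F + 2\|\bm{Y}-\bm{X}\|_\F$ with $\bm{Y}=\bm{Q}^\T f(\bm{A})\bm{Q}$, which needs only the ordinary Eckart--Young theorem. This buys you independence from the constrained-optimality lemma, and your intermediate scalar inequality (the distance from $\bm{Y}$ to $\llbracket\bm{X}\rrbracket_k$ exceeds its distance to the rank-$\le k$ set by at most $2\|\bm{Y}-\bm{X}\|_\F$) is a clean, reusable fact in its own right. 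The two details you flag---the $1$-Lipschitzness of $t\mapsto\sqrt{a^2+t^2}$ on $t\ge 0$ and the vanishing of the cross term because $\bm{Q}^\T(f(\bm{A})-\bm{Q}\bm{Q}^\T f(\bm{A})\bm{Q}\bm{Q}^\T)\bm{Q}=\bm{0}$---are both handled correctly, so the argument closes with no gaps.
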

% \begin{remark}
%     Note that one can derive a similar bound to \textit{(\ref*{item:truncated})} by using the triangle inequality and obtaining a bound for $\|(f(\bm{T})_{1:d_s,1:d_s})_{(k)}-(\bm{Q}_1^\T f(\bm{A}) \bm{Q}_1)_{(k)}\|_{\F}$. Treating $f(\bm{T})_{1:d_s,1:d_s}$ as a perturbation of $\bm{Q}_1^\T f(\bm{A}) \bm{Q}_1$ would lead to an unfortunate dependence of the singular value gap of $f(\bm{A})$. Our analysis avoids any dependence on gaps. 
% \end{remark}
\begin{proof}
%     We begin with proving \eqref{eq:robustness1}. Note that for any matrix $\bm{B}$ we have $\langle f(\bm{A})-\bm{Q} \bm{Q}^\T f(\bm{A})\bm{Q} \bm{Q}^\T, \bm{Q} \bm{B}\bm{Q} \rangle = 0$. Hence, by the Pythagorean theorem, we have
% \begin{align*}
%     \|f(\bm{A}) - \bm{Q}\bm{X} \bm{Q}^\T\|_{\F}^2 &= \|f(\bm{A}) -\bm{Q} \bm{Q}^\T f(\bm{A})\bm{Q} \bm{Q}^\T + \bm{Q}(\bm{Q}^\T f(\bm{A})\bm{Q} - \bm{X})\bm{Q}^\T\|_{\F}^2 \\&= 
%     \|f(\bm{A}) -\bm{Q} \bm{Q}^\T f(\bm{A})\bm{Q} \bm{Q}^\T\|_{\F}^2 + \|\bm{Q}^\T f(\bm{A})\bm{Q} - \bm{X}\|_{\F}^2,
% \end{align*}
% as required.
%\eqref{eq:robustness1} is immediate due to the Pythagorean theorem.

\rev{We will prove \eqref{eq:robustness2}} using a similar argument to \cite[Proof of Theorem 5.1]{tropp2016randomized}. Define $\bm{C} = f(\bm{A}) - \bm{Q} \bm{Q}^\T f(\bm{A}) \bm{Q} \bm{Q}^\T + \bm{Q} \bm{X}\bm{Q}^\T$. Note that $\|\bm{C} - f(\bm{A})\|_{\F} = \|\bm{Q}^\T f(\bm{A}) \bm{Q} - \bm{X}\|_{\F}$ and $\bm{Q}^\T \bm{C} \bm{Q} =  \bm{X}$. Hence,
    \begin{align*}
        \|f(\bm{A}) - \bm{Q} \llbracket\bm{X}\rrbracket_k\bm{Q}^\T\|_{\F} &= \|f(\bm{A}) - \bm{Q}\llbracket \bm{Q}^\T \bm{C} \bm{Q}\rrbracket_k\bm{Q}^\T\|_{\F} 
        \\&\leq 
        \|f(\bm{A}) - \bm{C}\|_{\F} + \|\bm{C} - \bm{Q}\llbracket \bm{Q}^\T \bm{C} \bm{Q}\rrbracket_k\bm{Q}^\T\|_{\F} 
        \\&=
        \|\bm{Q}^\T f(\bm{A})\bm{Q}-\bm{X}\|_{\F} + \|\bm{C} - \bm{Q}\llbracket \bm{Q}^\T \bm{C} \bm{Q}\rrbracket_k\bm{Q}^\T\|_{\F}. \numberthis \label{eq:first_ineq}
    \end{align*}
    %Choose an orthogonal projector $\bm{P}$ so that $\range(\bm{P}) \subseteq \range(\bm{Q})$ and $\bm{Q} \llbracket \bm{Q}^\T f(\bm{A})\bm{Q}\rrbracket_k \bm{Q}^\T = \bm{P} f(\bm{A}) \bm{P}$.
    Since $\bm{Q}\llbracket \bm{Q}^\T \bm{C} \bm{Q}\rrbracket_k\bm{Q}^\T$ is the best rank $k$ approximation whose range is contained in $\range(\bm{Q})$ \cite[Lemma 3.3]{funnystrom2} (a similar result is given in \cite[Theorem 3.5]{gu_subspace}), we have
    \begin{align*}
        %\hspace{5em}&\hspace{-5em}
        \|\bm{C} - \bm{Q}\llbracket \bm{Q}^\T \bm{C} \bm{Q}\rrbracket_k\bm{Q}^\T\|_{\F} 
         \leq&\|\bm{C} - \bm{Q}\llbracket \bm{Q}^\T f(\bm{A}) \bm{Q}\rrbracket_k\bm{Q}^\T\|_{\F} \\
         \leq& \|\bm{Q}^\T f(\bm{A})\bm{Q}-\bm{X}\|_{\F} + \|f(\bm{A}) - \bm{Q}\llbracket \bm{Q}^\T f(\bm{A}) \bm{Q}\rrbracket_k\bm{Q}^\T\|_{\F}\numberthis \label{eq:final_ineq},
    \end{align*}
    % \begin{align*}
    %     \hspace{5em}&\hspace{-5em}\|\bm{C} - \bm{Q}\llbracket \bm{Q}^\T \bm{C} \bm{Q}\rrbracket_k\bm{Q}^\T\|_{\F} 
    %     \leq \|\bm{C} - \bm{P} \bm{C} \bm{P}\|_{\F} 
    %     \\&=
    %     \|(f(\bm{A}) - \bm{C}) - \bm{P}(f(\bm{A}) - \bm{C}) \bm{P}\|_{\F} + \|f(\bm{A}) - \bm{P} f(\bm{A}) \bm{P}\|_{\F} \\&\leq
    %     \|f(\bm{A}) - \bm{C}\|_{\F} + \|f(\bm{A}) - \bm{Q}\llbracket \bm{Q}^\T f(\bm{A}) \bm{Q}\rrbracket_k\bm{Q}^\T \|_{\F} 
    %     \\&=
    %     \|\bm{Q}^\T f(\bm{\bm{A}})\bm{Q}-\bm{X}\|_{\F} + \|f(\bm{A}) - \bm{Q}\llbracket \bm{Q}^\T f(\bm{A}) \bm{Q}\rrbracket_k\bm{Q}^\T \|_{\F}, \numberthis \label{eq:final_ineq}
    % \end{align*}
    % where the second inequality is due to the fact that for any matrix $\bm{D}$ one has $\|\bm{D} - \bm{P} \bm{D} \bm{P}\|_{\F} \leq \|\bm{D}\|_{\F}$. 
    where the second inequality is due to the fact that $\|f(\bm{A})-\bm{C}\|_\F =\|\bm{Q}^\T f(\bm{A})\bm{Q}-\bm{X}\|_{\F}$. Combining \eqref{eq:first_ineq} and \eqref{eq:final_ineq} yields the desired inequality.
\end{proof}

A corollary of \Cref{theorem:robust} is that the error of the approximation from \Cref{alg:krylow} will always be bounded from above by the error of the approximation from \Cref{alg:rsvd_matfun}, up to a polynomial approximation error of $f$:
\begin{corollary}
\label{corr:early_bigger_subspace}
    Let $\lambda_{\max}$ and $\lambda_{\min}$ be the largest and smallest eigenvalues of $\bm{A}$. Let $\bm{Q}_s$ and $\bm{X}$ be the output from \Cref{alg:krylow} and let $\bm{W}$ and $\widetilde{\bm{X}}$ be the output from \Cref{alg:rsvd_matfun} with the same input parameters and the same sketch matrix $\bm{\Omega}$. Then,
    \begin{align*}
        &\|f(\bm{A}) - \bm{Q}_s \llbracket\bm{X}\rrbracket_k \bm{Q}_s^\T\|_\F 
        \\&\hspace{2em}
        \leq \|f(\bm{A}) - \bm{W} \llbracket\widetilde{\bm{X}}\rrbracket_k \bm{W}^\T\|_\F + 4\sqrt{\ell s}  \inf\limits_{p \in \mathbb{P}_{2r+1}}\|f(x)-p(x)\|_{L^{\infty}([\lambda_{\min},\lambda_{\max}])}.
    \end{align*}
\end{corollary}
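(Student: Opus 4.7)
The plan is to chain together three inequalities, two of which follow directly from previously established results, and one of which is an optimality observation. Starting from the algorithm's error, I would first apply \Cref{theorem:robust} with $\bm{Q} = \bm{Q}_s$ and $\bm{X}$ being the approximation computed in \Cref{alg:krylow}, which gives
\begin{equation*}
    \|f(\bm{A}) - \bm{Q}_s \llbracket \bm{X} \rrbracket_k \bm{Q}_s^\T\|_\F \leq \|f(\bm{A}) - \bm{Q}_s \llbracket \bm{Q}_s^\T f(\bm{A}) \bm{Q}_s \rrbracket_k \bm{Q}_s^\T\|_\F + 2 \|\bm{Q}_s^\T f(\bm{A}) \bm{Q}_s - \bm{X}\|_\F.
\end{equation*}
The perturbation term is exactly what \Cref{lemma:2_times_polynomial_approx} bounds, producing a $4\sqrt{\ell s}$ factor (the factor of $2$ from \Cref{theorem:robust} multiplied by the $2\sqrt{\ell s}$ from \Cref{lemma:2_times_polynomial_approx}) times the best polynomial approximation error on $[\lambda_{\min},\lambda_{\max}]$ for degree at most $2r+1$.

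Next, I would bridge from the exact projection onto $\range(\bm{Q}_s)$ to the exact projection onto $\range(\bm{W})$. The key structural observation is that, by construction in \Cref{alg:rsvd_matfun}, $\bm{K} = \bm{Q}_s f(\bm{T}_s)_{:,1:\ell} \bm{R}_0$ lies in $\range(\bm{Q}_s)$, so $\range(\bm{W}) \subseteq \range(\bm{Q}_s)$. Invoking \cite[Lemma 3.3]{funnystrom2} (used in the paragraph that introduced the Krylov-aware method) then yields
\begin{equation*}
    \|f(\bm{A}) - \bm{Q}_s \llbracket \bm{Q}_s^\T f(\bm{A}) \bm{Q}_s \rrbracket_k \bm{Q}_s^\T\|_\F \leq \|f(\bm{A}) - \bm{W} \llbracket \bm{W}^\T f(\bm{A}) \bm{W} \rrbracket_k \bm{W}^\T\|_\F.
\end{equation*}

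The final step is to move from exact to approximate projection on the $\bm{W}$ side, but \emph{without} incurring another polynomial approximation error. This is possible because $\bm{W} \llbracket \widetilde{\bm{X}} \rrbracket_k \bm{W}^\T$ has rank at most $k$ and range contained in $\range(\bm{W})$, while $\bm{W} \llbracket \bm{W}^\T f(\bm{A}) \bm{W} \rrbracket_k \bm{W}^\T$ is the minimizer of $\|f(\bm{A}) - \bm{M}\|_\F$ over all such matrices (again by \cite[Lemma 3.3]{funnystrom2}). Hence
\begin{equation*}
    \|f(\bm{A}) - \bm{W} \llbracket \bm{W}^\T f(\bm{A}) \bm{W} \rrbracket_k \bm{W}^\T\|_\F \leq \|f(\bm{A}) - \bm{W} \llbracket \widetilde{\bm{X}} \rrbracket_k \bm{W}^\T\|_\F,
\end{equation*}
with no extra error term. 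Assembling the three inequalities proves the corollary.

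I do not expect any genuine obstacle: the heavy machinery (Pythagorean argument for the inexact-projection perturbation, and polynomial exactness of block-Lanczos combined with \Cref{lemma:krylovkrylov}) is already carried by \Cref{theorem:robust} and \Cref{lemma:2_times_polynomial_approx}. The only subtlety worth flagging is the asymmetry of the argument: we pay a polynomial approximation error twice on the $\bm{Q}_s$ side but pay nothing on the $\bm{W}$ side, precisely because the optimality of $\bm{W}\llbracket \bm{W}^\T f(\bm{A})\bm{W}\rrbracket_k \bm{W}^\T$ within $\range(\bm{W})$ allows us to absorb any suboptimal choice $\widetilde{\bm{X}}$ without an additional perturbation term.
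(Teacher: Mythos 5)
Your proposal is correct and takes essentially the same approach as the paper's proof: it chains \Cref{theorem:robust} with \Cref{lemma:2_times_polynomial_approx} to handle the inexact quadratic form on the $\bm{Q}_s$ side, and uses the optimality of $\bm{W}\llbracket \bm{W}^\T f(\bm{A})\bm{W}\rrbracket_k \bm{W}^\T$ among rank-$k$ approximations with range in $\range(\bm{W}) \subseteq \range(\bm{Q}_s)$ to absorb the suboptimal $\widetilde{\bm{X}}$ at no extra cost. (You even state the inclusion $\range(\bm{W}) \subseteq \range(\bm{Q}_s)$ in the correct direction, whereas the paper's proof writes it reversed.)
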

\begin{proof}
    Since $\range(\bm{Q}_s) \subseteq \range(\bm{W})$ we have
    \begin{align*}
        &\|f(\bm{A}) - \bm{Q}_s\llbracket\bm{Q}_s^\T f(\bm{A})\bm{Q}_s \rrbracket_k\bm{Q}_s^\T\|_{\F} 
        \\&\hspace{7em}\leq \|f(\bm{A}) - \bm{W}\llbracket\bm{W}^\T f(\bm{A})\bm{W} \rrbracket_k\bm{W}^\T\|_{\F} 
        \\&\hspace{14em}\leq 
        \|f(\bm{A}) - \bm{W}\llbracket\widetilde{\bm{X}}\rrbracket_k\bm{W}^\T\|_{\F},
    \end{align*}
    where we used that $\bm{W}\llbracket\bm{W}^\T f(\bm{A})\bm{W} \rrbracket_k\bm{W}^\T$ is the best rank $k$ approximation to $f(\bm{A})$ whose range and co-range is contained in $\range(\bm{W})$ \cite[Lemma 3.3]{funnystrom2}. Hence, by \Cref{theorem:robust} we have
    \begin{align*}
        \|f(\bm{A}) - \bm{Q}_s \llbracket \bm{X} \rrbracket_k \bm{Q}_s^\T\|_{\F} 
        &\leq \|f(\bm{A}) - \bm{Q}_s\llbracket\bm{Q}_s^\T f(\bm{A})\bm{Q}_s \rrbracket_k\bm{Q}_s^\T\|_{\F} + 2\|\bm{Q}_s^\T f(\bm{A})\bm{Q}_s - \bm{X}\|_{\F} 
        \\&\leq 
        \|f(\bm{A}) - \bm{W}\llbracket\widetilde{\bm{X}}\rrbracket_k\bm{W}^\T\|_{\F} + 2\|\bm{Q}_s^\T f(\bm{A})\bm{Q}_s - \bm{X}\|_{\F}.
    \end{align*}
    Applying \Cref{lemma:2_times_polynomial_approx} yields the desired result. 
\end{proof}
\Cref{corr:early_bigger_subspace} already shows why we expect the Krylov-aware approach in \Cref{alg:krylow} to outperform a naive combination of Krylov subspace methods and the randomized SVD in \Cref{alg:rsvd_matfun}. In fact, we might expect a major improvement, since $\range(\bm{Q}_s)$ is a significantly larger subspace than $\range(\bm{W})$. In the subsequent sections we will derive stronger bounds for the approximation returned by \Cref{alg:krylow} to better justify this intuition.

\subsection{Structural bounds}\label{section:structural}
We next derive structural bounds for $\|f(\bm{A})- \bm{Q}_s \bm{Q}_s^\T f(\bm{A}) \bm{Q}_s \bm{Q}_s^\T\|_{\F}$ and $\|f(\bm{A})- \bm{Q}_s \llbracket \bm{Q}_s^\T f(\bm{A}) \bm{Q}_s\rrbracket_k \bm{Q}_s^\T\|_{\F}$ that is true for \textit{any} sketch matrix $\bm{\Omega}$ as long as $\bm{\Omega}_k$ defined in \eqref{eq:omega_partition} has rank $k$. These bounds will allow us to obtain probabilistic bounds on the error of approximating $f(\bm{A})$, at least under the assumption that $\bm{Q}_s \bm{Q}_s^\T f(\bm{A}) \bm{Q}_s \bm{Q}_s^\T$ and $\bm{Q}_s \llbracket \bm{Q}_s^\T f(\bm{A}) \bm{Q}_s\rrbracket_k \bm{Q}_s^\T$ are computed exactly. As a reminder, we will remove this assumption in \Cref{section:krylow} using the perturbation bounds from \Cref{section:inexact_projections}.
%\David{I just realized that we need to comment on what happens when $\rank(f(\bm{A})) < k$. This is not a problem, and there is a way to fix it. I just need to think what the best way of doing that is. }

To state our bounds, we introduce the quantity
\begin{equation}
\label{eqn:min_ratio_omega}
    \mathcal{E}_{\bm{\Omega}}(s;f) =
    \min\limits_{p \in \mathbb{P}_{s-1}}\left[\|p(\bm{\Lambda}_{n \setminus k}) \bm{\Omega}_{n \setminus k} \bm{\Omega}_k^{\dagger}\|_{\F}^2\max\limits_{i=1,\ldots,k} \left|\frac{f(\lambda_i)}{p(\lambda_i)}\right|^2\right],
\end{equation}
which quantifies the extent to which an $(s-1)$ degree polynomial can be large (relative to $f$) on the eigenvalues $\lambda_1, \ldots, \lambda_k$ and small on the remaining eigenvalues; \rev{similar quantities have appeared in other contexts \cite{RUHE1984391,guttelphd}.} Recall that we order $\bm{A}$'s eigenvalues with respect to $f$, so $|f(\lambda_1)| \geq |f(\lambda_1)| \geq \ldots \geq |f(\lambda_n)|$.
\begin{lemma}\label{lemma:structural}
    Consider $\bm{A} \in \mathbb{R}^{n \times n}$ as defined in \eqref{eq:A} \rev{and let $\bm{Q}_s$ be an orthonormal basis for $\mathcal{K}_{s}(\bm{A},\bm{\Omega})$}. Assuming $\bm{\Omega}_k$ in \eqref{eq:omega_partition} has rank $k$, for all functions $f: \mathbb{R} \to \mathbb{R}$, we have
    \begin{align}\label{eq:structural}
        \begin{split}
        &\|f(\bm{A}) - \bm{Q}_s  \bm{Q}_s^\T f(\bm{A})\bm{Q}_s \bm{Q}_s^\T \|_{\F}^2 
        \\&\hspace{7em}\leq \|f(\bm{A}) - \bm{Q}_s \llbracket \bm{Q}_s^\T f(\bm{A})\bm{Q}_s\rrbracket_k \bm{Q}_s^\T\|_{\F}^2 
        \\&\hspace{14em}\leq
        \|f(\bm{\Lambda}_{n \setminus k})\|_{\F}^2 + 5 \mathcal{E}_{\bm{\Omega}}(s;f).
        \end{split}
    \end{align}
\end{lemma}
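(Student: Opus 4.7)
The first inequality in \eqref{eq:structural} is immediate from the Pythagorean identity for the linear subspace $\{\bm{Q}_s \bm{Z} \bm{Q}_s^\T : \bm{Z} \in \mathbb{R}^{d_{s,\ell}\times d_{s,\ell}}\}$: for any $\bm{X}$,
\begin{equation*}
\|f(\bm{A}) - \bm{Q}_s\bm{X}\bm{Q}_s^\T\|_\F^2 = \|f(\bm{A}) - \bm{Q}_s\bm{Q}_s^\T f(\bm{A})\bm{Q}_s\bm{Q}_s^\T\|_\F^2 + \|\bm{Q}_s^\T f(\bm{A})\bm{Q}_s - \bm{X}\|_\F^2,
\end{equation*}
and specializing to $\bm{X} = \llbracket \bm{Q}_s^\T f(\bm{A})\bm{Q}_s\rrbracket_k$ yields the first claim since the second term is nonnegative.

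For the main inequality, I would use that $\bm{Q}_s\llbracket \bm{Q}_s^\T f(\bm{A})\bm{Q}_s\rrbracket_k\bm{Q}_s^\T$ minimizes Frobenius error over all symmetric rank-at-most-$k$ matrices whose range lies in $\range(\bm{Q}_s)$. Hence it suffices to exhibit a single such $\tilde{\bm{X}}$ satisfying the stated bound. Fix $p \in \mathbb{P}_{s-1}$ with $p(\lambda_i)\neq 0$ for $i \leq k$ and set
\begin{equation*}
\bm{Y} := p(\bm{A})\bm{\Omega}\bm{\Omega}_k^\dagger p(\bm{\Lambda}_k)^{-1}.
\end{equation*}
Since $p(\bm{A})\bm{\Omega}$ lies in $\mathcal{K}_s(\bm{A},\bm{\Omega})$, we have $\range(\bm{Y})\subseteq \range(\bm{Q}_s)$. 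A direct computation in the eigenbasis of $\bm{A}$ yields $\bm{Y} = \bm{U}_k + \bm{U}_{n\setminus k}\bm{E}$, where $\bm{E} := p(\bm{\Lambda}_{n\setminus k})\bm{\Omega}_{n\setminus k}\bm{\Omega}_k^\dagger p(\bm{\Lambda}_k)^{-1}$. The candidate $\tilde{\bm{X}} := \bm{Y} f(\bm{\Lambda}_k) \bm{Y}^\T$ is symmetric, has rank at most $k$, and satisfies $\range(\tilde{\bm{X}})\subseteq \range(\bm{Q}_s)$.

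Expanding $\|f(\bm{A}) - \tilde{\bm{X}}\|_\F^2$ in the $[\bm{U}_k,\bm{U}_{n\setminus k}]$ block decomposition gives
\begin{equation*}
\|f(\bm{A}) - \tilde{\bm{X}}\|_\F^2 = 2\|\bm{E}f(\bm{\Lambda}_k)\|_\F^2 + \|f(\bm{\Lambda}_{n\setminus k}) - \bm{E}f(\bm{\Lambda}_k)\bm{E}^\T\|_\F^2.
\end{equation*}
The first term is bounded by $\|p(\bm{\Lambda}_{n\setminus k})\bm{\Omega}_{n\setminus k}\bm{\Omega}_k^\dagger\|_\F^2 \max_i |f(\lambda_i)/p(\lambda_i)|^2$, i.e., at most $\mathcal{E}_{\bm{\Omega}}(s;f)$ after minimizing over $p$. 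For the second term, I would expand
\begin{equation*}
\|f(\bm{\Lambda}_{n\setminus k})-\bm{E}f(\bm{\Lambda}_k)\bm{E}^\T\|_\F^2 = \|f(\bm{\Lambda}_{n\setminus k})\|_\F^2 - 2\,\tr\!\big(f(\bm{\Lambda}_{n\setminus k})\bm{E}f(\bm{\Lambda}_k)\bm{E}^\T\big) + \|\bm{E}f(\bm{\Lambda}_k)\bm{E}^\T\|_\F^2,
\end{equation*}
and exploit the ordering $|f(\lambda_{k+i})|\leq |f(\lambda_j)|$ for $j\leq k$ to control the cross terms entrywise: written as $\sum_{i,j} f(\lambda_{k+i})f(\lambda_j)\bm{E}_{ij}^2$, the absolute value of the trace is bounded by $\sum_{i,j} f(\lambda_j)^2\bm{E}_{ij}^2 = \|\bm{E}f(\bm{\Lambda}_k)\|_\F^2 \leq \mathcal{E}_{\bm{\Omega}}$. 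An analogous Cauchy-Schwarz argument then controls the remaining quadratic term by $\mathcal{E}_{\bm{\Omega}}$, and summing the contributions ($2+2+1$) yields the factor of $5$.

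The main obstacle is proving the bound $\|\bm{E}f(\bm{\Lambda}_k)\bm{E}^\T\|_\F^2 \lesssim \mathcal{E}_{\bm{\Omega}}$ without incurring an extra factor depending on $p$: a naive submultiplicativity bound gives $\|\bm{E}f(\bm{\Lambda}_k)\|_\F^2\cdot\|\bm{E}\|_2^2$, and reducing the second factor to a constant requires carefully combining the eigenvalue ordering with the structure of $\bm{E}$, ultimately reusing the same entrywise bound that handled the cross term.
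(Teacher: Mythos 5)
Your first inequality and your overall strategy (compare against a rank-$k$ candidate built inside $\mathcal{K}_s(\bm{A},\bm{\Omega})$ from $\bm{Y}=p(\bm{A})\bm{\Omega}\bm{\Omega}_k^{\dagger}p(\bm{\Lambda}_k)^{-1}=\bm{U}_k+\bm{U}_{n\setminus k}\bm{E}$, then minimize over $p$) are exactly right, and your bounds on $\|\bm{E}f(\bm{\Lambda}_k)\|_\F^2$ and on the cross term via the eigenvalue ordering are correct. The gap is the step you yourself flag as the ``main obstacle,'' and it is worse than a missing trick: for your candidate $\tilde{\bm{X}}=\bm{Y}f(\bm{\Lambda}_k)\bm{Y}^\T$ the needed estimate $\|\bm{E}f(\bm{\Lambda}_k)\bm{E}^\T\|_\F^2\lesssim \mathcal{E}_{\bm{\Omega}}(s;f)$ is simply false. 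Already for $k=1$, $n=2$, $\bm{E}$ is a scalar $e$ and $f(\lambda_1)=a$: the per-polynomial quantity is $\mathcal{E}_p=e^2a^2$ while the quadratic term is $e^4a^2=\mathcal{E}_p\cdot e^2$, and $e$ is unbounded (the lemma is structural, so $\bm{\Omega}$ may be arbitrary with $\bm{\Omega}_k$ of full rank; take $s=1$ so $p$ is constant and $e=\omega_2/\omega_1$ can be huge). In that regime the \emph{actual} error of your candidate, $2e^2a^2+(f(\lambda_2)-e^2a)^2\approx e^4a^2$, exceeds $\|f(\bm{\Lambda}_{n\setminus k})\|_\F^2+5\mathcal{E}_p$, so no refinement of the entrywise argument can rescue this particular comparison matrix. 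The ordering trick saved the cross term only because a factor $f(\lambda_{k+i})$ could be traded for $f(\lambda_j)$; the pure quadratic term contains two raw copies of $\bm{E}$ and no such factor.

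The repair, which is what the paper does, is to not fix the middle matrix as $f(\bm{\Lambda}_k)$ but to compare against the two-sided orthogonal projection $\widetilde{\bm{P}}f(\bm{A})\widetilde{\bm{P}}$, where $\widetilde{\bm{P}}$ projects onto $\range(\bm{Y})\subseteq\range(\bm{Q}_s)$ (equivalently, use the optimal middle matrix for the fixed factors). Writing $\widehat{\bm{P}}=\bm{U}^\T\widetilde{\bm{P}}\bm{U}$ and using $\bm{U}^\T\bm{Y}=\begin{bmatrix}\bm{I}\\ \bm{F}\end{bmatrix}$ with $\bm{F}=\bm{E}$, the projector has the explicit form $\widehat{\bm{P}}=\begin{bmatrix}\bm{I}\\ \bm{F}\end{bmatrix}(\bm{I}+\bm{F}^\T\bm{F})^{-1}\begin{bmatrix}\bm{I}&\bm{F}^\T\end{bmatrix}$, and the factor $(\bm{I}+\bm{F}^\T\bm{F})^{-1}$ self-normalizes exactly where your candidate blows up: one has $\bm{I}-(\bm{I}+\bm{F}^\T\bm{F})^{-1}\preceq\bm{F}^\T\bm{F}$ and $\bm{F}(\bm{I}+\bm{F}^\T\bm{F})^{-1}\bm{F}^\T\preceq\bm{F}\bm{F}^\T$, so every error contribution is controlled by $\|\bm{F}f(\bm{\Lambda}_k)\|_\F\le\sqrt{\mathcal{E}_p}$ with no stray powers of $\|\bm{E}\|$. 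Splitting $\|f(\bm{\Lambda})-\widehat{\bm{P}}f(\bm{\Lambda})\widehat{\bm{P}}\|_\F^2$ by the Pythagorean theorem and bounding the off-diagonal piece by a triangle inequality gives $1+(1+1)^2=5$ copies of $\mathcal{E}_p$; minimizing over $p\in\mathbb{P}_{s-1}$ then yields the stated bound.
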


From this bound, we can further see why \Cref{alg:krylow} should be preferred over directly applying the randomized SVD to $f(\bm{A})$. As an extreme case, consider when $f$ is a polynomial of degree at most $s-1$. Then the standard randomized SVD bound \cite[Theorem 9.1]{rsvd} essentially replaces $p$ with $f$ in $\mathcal{E}_{\bm{\Omega}}(s;f)$ in \eqref{eq:structural}:
\begin{equation*}
    \|f(\bm{A}) - \bm{W} \llbracket \bm{W}^{\rev{\T}} f(\bm{A}) \bm{W} \rrbracket_k \bm{W}\|_\F^2 \leq \|f(\bm{\Lambda}_{n \setminus k})\|_\F^2 + 5 \|f(\bm{\Lambda}_{n \setminus k}) \bm{\Omega}_{n \setminus k} \bm{\Omega}_k^{\dagger}\|_\F^2,
\end{equation*}
where $\bm{W}$ is an orthonormal basis for $\range(f(\bm{A}) \bm{\Omega})$. Since $\mathcal{E}_{\bm{\Omega}}(s;f)$ minimizes over \emph{all} degree $s-1$ polynomials, it is always smaller than  $\|f(\bm{\Lambda}_{n \setminus k}) \bm{\Omega}_{n \setminus k} \bm{\Omega}_k^{\dagger}\|_\F^2$. So, the approximation returned by \Cref{alg:krylow} is expected to be more accurate compared to the randomized SVD. When $f$ is not a polynomial, the error of the randomized SVD roughly corresponds to plugging a good polynomial approximation for $f$ into \cref{eqn:min_ratio_omega}, but again there might  be a better choice to minimize $\mathcal{E}_{\bm{\Omega}}(s;f)$. 

Indeed, in many cases, we find that the improvement obtained from effectively optimizing over all possible degree $s-1$ polynomials is significant -- there is often a much better choice of polynomial than a direct approximation to $f$ for \cref{eqn:min_ratio_omega}. This is reflected in our experiments (\Cref{sec:experiments}) and we also provide a concrete analysis involving the matrix exponential in \Cref{section:exponential}. 

\rev{We also highlight that this bound shows that if there is a polynomial $p \in \mathbb{P}_{s-1}$ that is large on $\lambda_1, \ldots, \lambda_k$ and small on $\lambda_{k+1},\ldots,\lambda_{n}$, then one can construct a near-optimal rank $k$ approximation whose range is contained in $\range(\bm{Q}_s)$; similar ideas have been used in the context of low-rank approximations of $\bm{A}$ \cite{MM15,tropp2023randomized} and eigenvalue approximation \cite{paige1971computation,saad1980rates,underwood1975iterative}. In particular, we expect that $\mathcal{E}(s;f)$ will decay rapidly as $s$ increases when $f$ is monotonically increasing or decreasing, since in these cases one can find explicit polynomials to upper bound $\mathcal{E}_{\bm{\Omega}}(s;f)$. For example, when $f$ is monotonic one can choose the polynomial to be a scaled and shifted Chebyshev polynomial. }

% \begin{lemma}\label{lemma:structural}
%     Consider $\bm{A} \in \mathbb{R}^{n \times n}$ as defined in \eqref{eq:A}. Assuming $\bm{\Omega}_1$ in \eqref{eq:omega_partition} has rank $k$ and let $\bm{Q}_s$ be as in \Cref{alg:krylow}. Then for all functions $f: \mathbb{R} \mapsto \mathbb{R}$ we have
%     \begin{align}\label{eq:structural}
%     \begin{split}
%         \hspace{3em}&\hspace{-3em}\|f(\bm{A}) - \bm{Q}_s \bm{Q}_s^\T f(\bm{A})\bm{Q}_s \bm{Q}_s^\T\|_{\F}^2 \leq \|f(\bm{A}) - \bm{Q}_s \llbracket \bm{Q}_s^\T f(\bm{A})\bm{Q}_s\rrbracket_k \bm{Q}_s^\T\|_{\F}^2 
%         \\&\leq
%         \|f(\bm{\Lambda}_{n \setminus k})\|_{\F}^2 + 5 \min\limits_{p \in \mathbb{P}_{s-1}}\left[\|p(\bm{\Lambda}_{n \setminus k}) \bm{\Omega}_{n \setminus k} \bm{\Omega}_k^{\dagger}\|_{\F}^2 \cdot \max\limits_{i = 1,\ldots,k} \left|\frac{f(\lambda_i)}{p(\lambda_i)}\right|^2\right].
%         \end{split}
%     \end{align}
% \end{lemma}
\begin{proof}[Proof of \Cref{lemma:structural}]
The first inequality is due to the fact that $\bm{Q}_s\bm{Q}_s^{\T}f(\bm{A})\bm{Q}_s\bm{Q}_s^{\T}$ is the nearest matrix to $f(\bm{A})$ in the Frobenius norm whose range and co-range is contained in $\range(\bm{Q}_s)$ \cite[Lemma 3.3]{funnystrom2}. 
We proceed with proving the second inequality. 

To do so, it of course suffices to prove the inequality where $\mathcal{E}_{\bm{\Omega}}(s;f)$ is replaced with $\|p(\bm{\Lambda}_{n \setminus k}) \bm{\Omega}_{n \setminus k} \bm{\Omega}_k^{\dagger}\|_{\F}^2\cdot \max_{i=1,\ldots,k} \left|{f(\lambda_i)}/{p(\lambda_i)}\right|^2$ for any choice of $p \in \mathbb{P}_{s-1}$.
Note that if we choose $p$ so that $p(\lambda_i) = 0$ for some $i = 1,\ldots,k$ then the right hand side of \eqref{eq:structural} is infinite and the bound trivially holds. 
Hence, we may assume that $p(\lambda_i) \neq 0$ for $i = 1,\ldots,k$.
Consequently, $p(\bm{\Lambda}_k)$ is non-singular.
Define $\bm{Z} = p(\bm{A}) \bm{\Omega} \bm{\Omega}_k^{\dagger} p(\bm{\Lambda}_k)^{-1}$ and let $\widetilde{\bm{P}}$ be the orthogonal projector onto $\range(\bm{Z}) \subseteq \range(\bm{Q}_s)$.
Note that $\rank(\bm{Z}) \leq k$ and $\bm{Q}_s \llbracket\bm{Q}_s^\T f(\bm{A})\bm{Q}_s\rrbracket_{k} \bm{Q}_s^\T$ is the best rank $k$ approximation to $f(\bm{A})$ whose range and co-range is contained in $\range(\bm{Q}_s)$ \cite[Lemma 3.3]{funnystrom2}.
Hence,
\begin{equation*}
    \|f(\bm{A}) - \bm{Q}_s \llbracket\bm{Q}_s^\T f(\bm{A})\bm{Q}_s\rrbracket_{k} \bm{Q}_s^\T \|_{\F}^2 \leq \|f(\bm{A}) - \widetilde{\bm{P}}f(\bm{A})\widetilde{\bm{P}} \|_{\F}^2.
\end{equation*}
Now define $\widehat{\bm{P}} = \bm{U}^\T \widetilde{\bm{P}} \bm{U}$, which is the orthogonal projector onto $\range(\bm{U}^\T \bm{Z})$. By the unitary invariance of the Frobenius norm we have
\begin{equation*}
    \|f(\bm{A}) - \widetilde{\bm{P}}f(\bm{A})\widetilde{\bm{P}} \|_{\F}^2 = \|f(\bm{\Lambda}) - \widehat{\bm{P}} f(\bm{\Lambda}) \widehat{\bm{P}}\|_{\F}^2.
\end{equation*}
Furthermore, by Pythagorean theorem,
\begin{equation}\label{eq:two_terms}
    \|f(\bm{\Lambda}) - \widehat{\bm{P}} f(\bm{\Lambda}) \widehat{\bm{P}}\|_{\F}^2 = \|(\bm{I} - \widehat{\bm{P}}) f(\bm{\Lambda})\|_{\F}^2 + \|\widehat{\bm{P}} f(\bm{\Lambda})(\bm{I}-\widehat{\bm{P}})\|_{\F}^2.
\end{equation}
We are going to bound the two terms on the right hand side of \eqref{eq:two_terms} separately. 
Our analysis for the first is similar to the proof of \cite[Theorem 9.1]{rsvd}.

Note that since $\rank(\bm{\Omega}_k) = k$ we have $\bm{\Omega}_k \bm{\Omega}_k^{\dagger} = \bm{I}$. Hence,
\begin{equation*}
    \bm{U}^\T\bm{Z} = \bm{U}^\T p(\bm{A}) \bm{\Omega} \bm{\Omega}_k^{\dagger} p(\bm{\Lambda}_k)^{-1} = \begin{bmatrix} \bm{I} \\ p(\bm{\Lambda}_{n \setminus k}) \bm{\Omega}_{n \setminus k} \bm{\Omega}_k^{\dagger} p(\bm{\Lambda}_k)^{-1}\end{bmatrix} =: \begin{bmatrix} \bm{I} \\ \bm{F} \end{bmatrix}. 
\end{equation*}
Hence, 
\begin{align*}
    \bm{I}-\widehat{\bm{P}} &= \begin{bmatrix} \bm{I} - (\bm{I}+\bm{F}^\T \bm{F})^{-1} & -(\bm{I}+\bm{F}^\T \bm{F})^{-1}\bm{F}^\T \\ -\bm{F}(\bm{I}+\bm{F}^\T \bm{F})^{-1} & \bm{I} - \bm{F}(\bm{I}+\bm{F}^\T \bm{F})^{-1}\bm{F}^\T\end{bmatrix} \\
    &\preceq \begin{bmatrix} \bm{F}^\T \bm{F} & -(\bm{I}+\bm{F}^\T \bm{F})^{-1}\bm{F}^\T \\ -\bm{F}(\bm{I}+\bm{F}^\T \bm{F})^{-1} & \bm{I}\end{bmatrix},
\end{align*}
\rev{where $\preceq$ denotes the Loewner order \cite[Definition 7.7.1]{matrixanalysis} and} the inequality is due to \cite[Proposition 8.2]{rsvd}. \rev{Hence, we have $f(\bm{\Lambda}_1)\left(\bm{I}-(\bm{I} + \bm{F}^\T \bm{F})^{-1}\right) f(\bm{\Lambda}_1) \preceq f(\bm{\Lambda}_1)\bm{F}^\T \bm{F} f(\bm{\Lambda}_1)$ and $f(\bm{\Lambda}_2)\left(\bm{I}-\bm{F}(\bm{I} + \bm{F}^\T \bm{F})^{-1}\bm{F}^\T\right) f(\bm{\Lambda}_2) \preceq f(\bm{\Lambda}_2)^2$.} Consequently, \rev{using \cite[Corollary 7.7.4]{matrixanalysis}, the definition of $\bm{F}$, and strong submultiplicativity of the Frobenius norm we get}
\begin{align*}
     \|(\bm{I} - \widehat{\bm{P}}) f(\bm{\Lambda})\|_{\F}^2 &= \tr(f(\bm{\Lambda})(\bm{I} - \widehat{\bm{P}})f(\bm{\Lambda})) 
     \\&\leq
      \|f(\bm{\Lambda}_{n \setminus k})\|_{\F}^2 + \|\bm{F}f(\bm{\Lambda}_k)\|_{\F}^2 \\&\leq
     \|f(\bm{\Lambda}_{n \setminus k})\|_{\F}^2 + \|p(\bm{\Lambda}_{n \setminus k})\bm{\Omega}_{n \setminus k} \bm{\Omega}_k^{\dagger}\|_{\F}^2\|p(\bm{\Lambda}_k)^{-1}f(\bm{\Lambda}_k)\|_2^2  
     \\&= \|f(\bm{\Lambda}_{n \setminus k})\|_{\F}^2 + \|p(\bm{\Lambda}_{n \setminus k})\bm{\Omega}_{n \setminus k} \bm{\Omega}_k^{\dagger}\|_{\F}^2 \max\limits_{i=1,\ldots,k} \left|\frac{f(\lambda_i)}{p(\lambda_i)}\right|^2. \numberthis \label{eq:first_term}
\end{align*}

We proceed with bounding the second term in \eqref{eq:two_terms}.
Our analysis is similar to the proof of \cite[Lemma 3.7]{persson_kressner_23}. 
By the triangle inequality we have 
\begin{equation*}
    \|\widehat{\bm{P}} f(\bm{\Lambda})(\bm{I}-\widehat{\bm{P}})\|_{\F} \leq \left\|\begin{bmatrix} \bm{0} & \\ & f(\bm{\Lambda}_{n \setminus k})\end{bmatrix} \widehat{\bm{P}}\right\|_{\F} + \left\|(\bm{I} - \widehat{\bm{P}}) \begin{bmatrix} f(\bm{\Lambda}_k) & \\ & \bm{0}\end{bmatrix}\right\|_{\F}.
\end{equation*}
Using a similar argument as in \eqref{eq:first_term} we have
\begin{equation}
    \left\|(\bm{I} - \widehat{\bm{P}}) \begin{bmatrix} f(\bm{\Lambda}_k) & \\ & \bm{0}\end{bmatrix} \right\|_{\F} \leq \|p(\bm{\Lambda}_{n \setminus k})\bm{\Omega}_{n \setminus k} \bm{\Omega}_k^{\dagger}\|_{\F}\max\limits_{i=1,\ldots,k} \left|\frac{f(\lambda_i)}{p(\lambda_i)}\right|,\label{eq:second_term}
\end{equation}
and since $\bm{F}(\bm{I}+\bm{F}^\T \bm{F})^{-1}\bm{F}^\T \preceq \bm{F}\bm{F}^\T$ we have
\begin{align*}
    \left\|\begin{bmatrix} \bm{0} & \\ & f(\bm{\Lambda}_{n \setminus k})\end{bmatrix} \widehat{\bm{P}}\right\|_{\F}^2 
    &= \tr\left(\begin{bmatrix} \bm{0} & \\ & f(\bm{\Lambda}_{n \setminus k})\end{bmatrix} \widehat{\bm{P}}\begin{bmatrix} \bm{0} & \\ & f(\bm{\Lambda}_{n \setminus k})\end{bmatrix}\right) 
    \\&=\tr(f(\bm{\Lambda}_{n \setminus k})\bm{F}(\bm{I} + \bm{F}^\T \bm{F})^{-1}\bm{F}^\T f(\bm{\Lambda}_{n \setminus k}))
    \\&\leq
    \tr(f(\bm{\Lambda}_{n \setminus k})\bm{F}\bm{F}^\T f(\bm{\Lambda}_{n \setminus k}))
    =
    \|f(\bm{\Lambda}_{n \setminus k}) \bm{F}\|_{\F}^2 
    \\&\leq \|f(\bm{\Lambda}_{n \setminus k})\|_2^2 \|p(\bm{\Lambda}_k)^{-1}\|_2^2 \|p(\bm{\Lambda}_{n \setminus k}) \bm{\Omega}_{n \setminus k} \bm{\Omega}_k^{\dagger}\|_{\F}^2 
    \\&\leq 
    \|p(\bm{\Lambda}_{n \setminus k})\bm{\Omega}_{n \setminus k} \bm{\Omega}_k^{\dagger}\|_{\F}^2\max\limits_{i=1,\ldots,k} \left|\frac{f(\lambda_i)}{p(\lambda_i)}\right|^2.\numberthis \label{eq:third_term}
\end{align*}
Inserting the bounds \eqref{eq:first_term}, \eqref{eq:second_term}, and \eqref{eq:third_term} into \eqref{eq:two_terms} and optimizing over $\mathbb{P}_{s-1}$ yields the desired inequality. 
\end{proof}

\subsection{Probabilistic bounds}\label{section:probabilistic}
%\David{Explain here that this is why this is much better than the randomized SVD}
With the structural bound available, we are ready to derive probabilistic bounds for $\|f(\bm{A})- \bm{Q}_s \bm{Q}_s^\T f(\bm{A}) \bm{Q}_s \bm{Q}_s^\T\|_{\F}$ and $\|f(\bm{A})- \bm{Q}_s \llbracket\bm{Q}_s^\T f(\bm{A}) \bm{Q}_s\rrbracket_k \bm{Q}_s^\T\|_{\F}$. Note that by \Cref{lemma:structural} it is sufficient to derive a probabilistic bound for $\mathcal{E}_{\bm{\Omega}}(s;f)$ defined in \cref{eqn:min_ratio_omega}.

We will bound $\mathcal{E}_{\bm{\Omega}}(s;f)$ in terms of a deterministic quantity
\begin{equation}
\label{eqn:min_ratio}
    \mathcal{E}(s;f) =
    \min\limits_{p \in \mathbb{P}_{s-1}}\left[\|p(\bm{\Lambda}_{n \setminus k}) \|_{\F}^2\max\limits_{i=1,\ldots,k} \left|\frac{f(\lambda_i)}{p(\lambda_i)}\right|^2\right] ,
\end{equation}
which again quantifies how large a polynomial can be (relative to $f$) on the eigenvalues $\lambda_1, \ldots, \lambda_k$ and small on the remaining eigenvalues. However, $\mathcal{E}(s;f)$ does not depend on the randomness used by the algorithm.
\begin{lemma}\label{lemma:probabilistic}
    If $\rev{\bm{\Omega} \in \mathbb{R}^{n \times \ell}}$ is a standard Gaussian matrix, $\bm{\Omega}_k$ and $\bm{\Omega}_{n \setminus k}$ are as defined as in \eqref{eq:omega_partition}, and $\mathcal{E}_{\bm{\Omega}}(s;f)$ and $\mathcal{E}(s;f)$ are as defined in \cref{eqn:min_ratio_omega,eqn:min_ratio}, then
    \begin{enumerate}[(i)]
        % \item for any $u,t\geq 0$, with probability at least $1-e^{-(u-2)/4} - \sqrt{\pi k} \left(\frac{t}{e}\right)^{-(\ell -k + 1)/2}$,
        % \begin{align*}
        %     \mathcal{E}_{\bm{\Omega}}(s;f)\leq \frac{ut k}{\ell - k + 1} \mathcal{E}(s;f);
        % \end{align*}\label{item:tailbound}
        \item \rev{for any $\delta \in (0,1)$ and $\ell \geq k$, with probability at least $1-\delta$,
        \begin{align*}
            \mathcal{E}_{\bm{\Omega}}(s;f)\leq C_{\delta,k,\ell} \mathcal{E}(s;f),
        \end{align*}\label{item:tailbound}
        where $C_{\delta,k,\ell} = 2e\left(2\log(2/\delta) + 1\right)\left(\frac{2\sqrt{\pi k}}{\delta}\right)^{\frac{2}{\ell-k+1}}\frac{k}{\ell - k + 1}$;}
        \item if $\ell -k \geq 2$ we have \begin{align*}
            \mathbb{E}[\mathcal{E}_{\bm{\Omega}}(s;f)] 
            \leq \frac{k}{\ell-k-1} \mathcal{E}(s;f).
        \end{align*}\label{item:expectationbound}
    \end{enumerate}
\end{lemma}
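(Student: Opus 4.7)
The strategy for both parts is to upper bound $\mathcal{E}_{\bm{\Omega}}(s;f)$ by plugging in a polynomial $p^{\star}\in\mathbb{P}_{s-1}$ that attains the minimum in $\mathcal{E}(s;f)$ (without loss of generality, $p^{\star}(\lambda_i)\neq 0$ for $i\le k$), so that
\[
\mathcal{E}_{\bm{\Omega}}(s;f)\;\le\;\|p^{\star}(\bm{\Lambda}_{n\setminus k})\bm{\Omega}_{n\setminus k}\bm{\Omega}_k^{\dagger}\|_{\F}^{2}\,\max_{i\le k}\left|\frac{f(\lambda_i)}{p^{\star}(\lambda_i)}\right|^{2}.
\]
The task then reduces to bounding $\|p^{\star}(\bm{\Lambda}_{n\setminus k})\bm{\Omega}_{n\setminus k}\bm{\Omega}_k^{\dagger}\|_{\F}^{2}$ in terms of $\|p^{\star}(\bm{\Lambda}_{n\setminus k})\|_{\F}^{2}$. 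The crucial probabilistic fact, used throughout, is that since $[\bm{U}_k\;\bm{U}_{n\setminus k}]$ is orthogonal, rotational invariance of the Gaussian distribution guarantees that $\bm{\Omega}_k$ and $\bm{\Omega}_{n\setminus k}$ are \emph{independent} standard Gaussian matrices of sizes $k\times\ell$ and $(n-k)\times\ell$.

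For part~(ii), I would condition on $\bm{\Omega}_k$ (and hence on $\bm{\Omega}_k^{\dagger}$) and apply the classical identity $\mathbb{E}[\|\bm{S}\bm{G}\bm{T}\|_{\F}^{2}]=\|\bm{S}\|_{\F}^{2}\|\bm{T}\|_{\F}^{2}$ for a standard Gaussian $\bm{G}$ with fixed $\bm{S},\bm{T}$ \cite[Prop.~10.1]{rsvd} to get
\[
\mathbb{E}\!\left[\|p^{\star}(\bm{\Lambda}_{n\setminus k})\bm{\Omega}_{n\setminus k}\bm{\Omega}_k^{\dagger}\|_{\F}^{2}\,\big|\,\bm{\Omega}_k\right]=\|p^{\star}(\bm{\Lambda}_{n\setminus k})\|_{\F}^{2}\,\|\bm{\Omega}_k^{\dagger}\|_{\F}^{2}.
\]
Taking expectation over $\bm{\Omega}_k$ and using the well-known inverse-Wishart identity $\mathbb{E}[\|\bm{\Omega}_k^{\dagger}\|_{\F}^{2}]=k/(\ell-k-1)$ (valid for $\ell-k\ge 2$, cf.~\cite[Prop.~10.2]{rsvd}) gives exactly $\mathbb{E}[\mathcal{E}_{\bm{\Omega}}(s;f)]\le \frac{k}{\ell-k-1}\mathcal{E}(s;f)$.

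For part~(i), the plan is to control the two factors $\|\bm{\Omega}_k^{\dagger}\|_{\F}^{2}$ and (conditionally) $\|p^{\star}(\bm{\Lambda}_{n\setminus k})\bm{\Omega}_{n\setminus k}\bm{\Omega}_k^{\dagger}\|_{\F}^{2}/\|\bm{\Omega}_k^{\dagger}\|_{\F}^{2}$ separately at confidence $1-\delta/2$ each, then union bound. First, a tail bound on the pseudoinverse of a Gaussian matrix (obtained by applying Markov's inequality to the $(\ell-k+1)$-st moment of $\|\bm{\Omega}_k^{\dagger}\|_2$, in the spirit of Chen--Dongarra or \cite[Prop.~10.4]{rsvd}, together with $\|\bm{\Omega}_k^{\dagger}\|_{\F}^{2}\le k\|\bm{\Omega}_k^{\dagger}\|_2^{2}$) yields
\[
\|\bm{\Omega}_k^{\dagger}\|_{\F}^{2}\le\left(\frac{2\sqrt{\pi k}}{\delta}\right)^{\!2/(\ell-k+1)}\frac{k}{\ell-k+1}
\]
with probability $\ge 1-\delta/2$. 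Second, conditionally on $\bm{\Omega}_k$, the quantity $\|p^{\star}(\bm{\Lambda}_{n\setminus k})\bm{\Omega}_{n\setminus k}\bm{\Omega}_k^{\dagger}\|_{\F}^{2}$ is a weighted sum of independent $\chi^{2}_{1}$ random variables with mean $\|p^{\star}(\bm{\Lambda}_{n\setminus k})\|_{\F}^{2}\|\bm{\Omega}_k^{\dagger}\|_{\F}^{2}$, so a Hanson--Wright / Laurent--Massart tail (or a direct Markov argument on the moment generating function of the Gaussian chaos) delivers
\[
\|p^{\star}(\bm{\Lambda}_{n\setminus k})\bm{\Omega}_{n\setminus k}\bm{\Omega}_k^{\dagger}\|_{\F}^{2}\le 2e\bigl(2\log(2/\delta)+1\bigr)\|p^{\star}(\bm{\Lambda}_{n\setminus k})\|_{\F}^{2}\,\|\bm{\Omega}_k^{\dagger}\|_{\F}^{2}
\]
with probability $\ge 1-\delta/2$. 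Multiplying the two bounds together, then by $\max_{i\le k}|f(\lambda_i)/p^{\star}(\lambda_i)|^{2}$, gives $\mathcal{E}_{\bm{\Omega}}(s;f)\le C_{\delta,k,\ell}\,\mathcal{E}(s;f)$ with probability $\ge 1-\delta$.

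The main obstacle is matching the stated constants in part~(i) exactly: the pseudoinverse tail must be derived so as to produce the precise $(2\sqrt{\pi k}/\delta)^{2/(\ell-k+1)}\cdot k/(\ell-k+1)$ form, which requires careful choice of the Markov moment and the Chen--Dongarra-type density bound on the smallest singular value of $\bm{\Omega}_k$; the Gaussian-chaos concentration step is more routine but must be tuned to yield the compact multiplicative form $2e(2\log(2/\delta)+1)$ rather than the more common additive $\sqrt{x}+x$ expression. Part~(ii), by contrast, follows directly from the independence of $\bm{\Omega}_k$ and $\bm{\Omega}_{n\setminus k}$ together with the two classical Gaussian identities mentioned above.
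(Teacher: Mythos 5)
Your proposal is correct and follows essentially the same route as the paper: fix the polynomial attaining the minimum in $\mathcal{E}(s;f)$, exploit the independence of $\bm{\Omega}_k$ and $\bm{\Omega}_{n\setminus k}$, and bound $\|p^{\star}(\bm{\Lambda}_{n\setminus k})\bm{\Omega}_{n\setminus k}\bm{\Omega}_k^{\dagger}\|_{\F}^2$ via the standard Gaussian conditioning, pseudo-inverse tail, and chaos-concentration arguments. The only difference is that the paper outsources this probabilistic core to \cite[Proposition 8.6]{tropp2023randomized} and then solves for the constants, whereas you sketch a re-derivation of that proposition (with the factor of $e$ allocated to the chaos term rather than the pseudo-inverse term, which yields the same product $C_{\delta,k,\ell}$).
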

% \begin{lemma}\label{lemma:probabilistic}
%     If $\bm{\Omega}$ is a standard Gaussian matrix, and $\bm{\Omega}_k$ and $\bm{\Omega}_{n \setminus k}$ are defined as in \eqref{eq:omega_partition}, then
%     \begin{enumerate}[(i)]
%         \item for any $u,t\geq 0$, with probability at least $1-e^{-(u-2)/4} - \sqrt{\pi k} \left(\frac{t}{e}\right)^{-(\ell -k + 1)/2}$ we have
%         \begin{align*}
%             \hspace{8em}&\hspace{-8em}\min\limits_{p \in \mathbb{P}_{s-1}}\left[\|p(\bm{\Lambda}_{n \setminus k}) \bm{\Omega}_{n \setminus k} \bm{\Omega}_k^{\dagger}\|_{\F}^2\max\limits_{i=1,\ldots,k} \left|\frac{f(\lambda_i)}{p(\lambda_i)}\right|^2\right] 
%             \\&\leq \frac{ut k}{\ell - k + 1} \min\limits_{p \in \mathbb{P}_{s-1}} \left[\|p(\bm{\Lambda}_{n \setminus k})\|_{\F}^2\max\limits_{i=1,\ldots,k} \left|\frac{f(\lambda_i)}{p(\lambda_i)}\right|^2 \right];
%         \end{align*}\label{item:tailbound}
%         \item if $\ell -k \geq 2$ we have \begin{align*}
%             \hspace{8em}&\hspace{-8em}\mathbb{E}\left[\min\limits_{p \in \mathbb{P}_{s-1}}\|p(\bm{\Lambda}_{n \setminus k}) \bm{\Omega}_{n \setminus k} \bm{\Omega}_k^{\dagger}\|_{\F}^2 \max\limits_{i=1,\ldots,k} \left|\frac{f(\lambda_i)}{p(\lambda_i)}\right|^2\right] 
%             \\&\leq \frac{k}{\ell-k-1} \min\limits_{p \in \mathbb{P}_{s-1}} \left[\|p(\bm{\Lambda}_{n \setminus k})\|_{\F}^2\max\limits_{i=1,\ldots,k} \left|\frac{f(\lambda_i)}{p(\lambda_i)}\right|^2\right].
%         \end{align*}\label{item:expectationbound}
%     \end{enumerate}
% \end{lemma}
\begin{proof}
\textit{(\ref*{item:tailbound})}: For any polynomial $p \in \mathbb{P}_{s-1}$ by \cite[Proposition 8.6]{tropp2023randomized} we have with probability at least $1-e^{-(u-2)/4} - \sqrt{\pi k} \left(\frac{t}{e}\right)^{-(\ell -k + 1)/2}$
\begin{equation*}
    \left[\|p(\bm{\Lambda}_{n \setminus k}) \bm{\Omega}_{n \setminus k} \bm{\Omega}_k^{\dagger}\|_{\F}^2\max\limits_{i=1,\ldots,k} \left|\frac{f(\lambda_i)}{p(\lambda_i)}\right|^2\right] \leq \frac{ut k}{\ell - k + 1}\left[\|p(\bm{\Lambda}_{n \setminus k})\|_{\F}^2\max\limits_{i=1,\ldots,k} \left|\frac{f(\lambda_i)}{p(\lambda_i)}\right|^2 \right].
\end{equation*}
The inequality is respected if we minimize both sides over all polynomials. \rev{Solving $e^{(u-2)/2} \leq \delta/2$ and $\sqrt{\pi k} \left(\frac{t}{e}\right)^{-(\ell -k + 1)/2} \leq \delta/2$ for $u$ and $t$ yields the desired result.}

\textit{(\ref*{item:expectationbound})}: This is proven in an identical fashion utilizing the expectation bound in \cite[Proposition 8.6]{tropp2023randomized}.
\end{proof}

\subsection{Error bounds for Krylov aware low-rank approximation}\label{section:krylow}

With \Cref{theorem:robust}, \Cref{lemma:structural}, and \Cref{lemma:probabilistic} we can now derive a probabilistic error bound for $\ALG_k(s,r;f)$, the output of \Cref{alg:krylow}. By an almost identical argument one can obtain a similar bound for $\ALG(s,r;f)$, but we omit the details.

%Before giving our bound, we introduce the quantity
%\begin{equation}\label{eqn:eps1}
%\epsilon_1(r;f) = 4\sqrt{\ell s}  \inf\limits_{p \in \mathbb{P}_{2r+1}}\|f(x)-p(x)\|_{L^{\infty}([\lambda_{\min},\lambda_{\max}])},
%\end{equation}
%which controls the accuracy of the approximation of the quadratic form $\bm{Q}_s^\T f(\bm{A})\bm{Q}_s$.
%Note that if we know that $\|f(\bm{T}_q)_{1:d_s,1:d_s} - \bm{Q}_s^\T f(\bm{A}) \bm{Q}_s\|_{\F} \leq \epsilon$ almost surely then the $\epsilon_1(s;f)$ can be replaced with $2 \epsilon$. However, as is common with bounds for Krylov subspace methods, we opt to provide a bound which does not depend on the quantity $\bm{T}_q$ explicitly. 

\begin{theorem}\label{theorem:krylov_aware}
Consider $\bm{A} \in \mathbb{R}^{n \times n}$ as defined in \eqref{eq:A} with smallest and largest eigenvalues $\lambda_{\min}$ and $\lambda_{\max}$ respectively.
Then, with $\mathcal{E}(s;f)$ %and $\epsilon_1(r;f)$ 
as defined in \cref{eqn:min_ratio} %\cref{eqn:min_ratio,eqn:eps1}
\begin{enumerate}[(i)]
        % \item with probability at least $1-e^{-(u-2)/4} - \sqrt{\pi r} \left(\frac{t}{e}\right)^{-(\ell -k + 1)/2}$, %simultaneously for all functions $f: \mathbb{R} \mapsto \mathbb{R}$,
        % \begin{align*}
        % \|f(\bm{A}) - \ALG_k(s,r;f) \|_{\F} \leq &4\sqrt{\ell s}  \inf\limits_{p \in \mathbb{P}_{2r+1}}\|f(x)-p(x)\|_{L^{\infty}([\lambda_{\min},\lambda_{\max}])}+\\%\epsilon_1(r;f) +
        % &\sqrt{\|f(\bm{\Lambda}_{n \setminus k })\|_{\F}^2 +  \frac{5ut k}{\ell - k + 1} \mathcal{E}(s;f)};
        % \end{align*}\label{item:krylov_aware_tailbound}
        \item \rev{with probability at least $1-\delta$,
        \begin{align*}
        \|f(\bm{A}) - \ALG_k(s,r;f) \|_{\F} \leq &4\sqrt{\ell s}  \inf\limits_{p \in \mathbb{P}_{2r+1}}\|f(x)-p(x)\|_{L^{\infty}([\lambda_{\min},\lambda_{\max}])}+\\%\epsilon_1(r;f) +
        &\sqrt{\|f(\bm{\Lambda}_{n \setminus k })\|_{\F}^2 +  5C_{\delta,k,\ell} \mathcal{E}(s;f)},
        \end{align*}\label{item:krylov_aware_tailbound}
        where $C_{\delta,k,\ell}$ is defined in \Cref{lemma:probabilistic};}
        \item if $\ell -k \geq 2$ that
        \begin{align*}
        \mathbb{E}\|f(\bm{A}) - \ALG_k(s,r;f) \|_{\F} \leq &4\sqrt{\ell s}  \inf\limits_{p \in \mathbb{P}_{2r+1}}\|f(x)-p(x)\|_{L^{\infty}([\lambda_{\min},\lambda_{\max}])}+\\%\epsilon_1(r;f) +
        &\sqrt{\|f(\bm{\Lambda}_{n \setminus k })\|_{\F}^2 +  \frac{5k}{\ell - k - 1} \mathcal{E}(s;f)}.        \end{align*}\label{item:krylov_aware_expectationbound}
    \end{enumerate}
\end{theorem}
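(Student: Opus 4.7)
The plan is to combine the three key lemmas from the preceding subsections: the inexact-projection robustness bound (\Cref{theorem:robust}), the structural bound (\Cref{lemma:structural}), and the probabilistic conversion from $\mathcal{E}_{\bm{\Omega}}$ to $\mathcal{E}$ (\Cref{lemma:probabilistic}), together with \Cref{lemma:2_times_polynomial_approx} to control the quadratic-form approximation error.

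Since $\ALG_k(s,r;f) = \bm{Q}_s \llbracket \bm{X} \rrbracket_k \bm{Q}_s^\T$ with $\bm{X} = f(\bm{T}_q)_{1:d_{s,\ell},1:d_{s,\ell}}$, the first step is to apply \Cref{theorem:robust} with $\bm{Q} = \bm{Q}_s$, giving
\[
\|f(\bm{A}) - \ALG_k(s,r;f)\|_\F \leq \|f(\bm{A}) - \bm{Q}_s \llbracket \bm{Q}_s^\T f(\bm{A}) \bm{Q}_s \rrbracket_k \bm{Q}_s^\T\|_\F + 2\|\bm{Q}_s^\T f(\bm{A}) \bm{Q}_s - \bm{X}\|_\F.
\]
The second summand is controlled \emph{deterministically} by \Cref{lemma:2_times_polynomial_approx}, producing exactly the term $4\sqrt{\ell s}\,\inf_{p \in \mathbb{P}_{2r+1}}\|f-p\|_{L^\infty([\lambda_{\min},\lambda_{\max}])}$ appearing in both bounds. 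This observation — that the polynomial-approximation contribution carries no randomness — is what will let us cleanly separate it from the expectation in part~(ii).

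For the first summand, note that when $\bm{\Omega}$ is standard Gaussian with $\ell \geq k$, the submatrix $\bm{\Omega}_k$ has rank $k$ almost surely, so \Cref{lemma:structural} applies and gives the upper bound $\sqrt{\|f(\bm{\Lambda}_{n \setminus k})\|_\F^2 + 5\mathcal{E}_{\bm{\Omega}}(s;f)}$. To establish (i), I would substitute the tail estimate of \Cref{lemma:probabilistic}\textit{(i)}, namely $\mathcal{E}_{\bm{\Omega}}(s;f) \leq C_{\delta,k,\ell}\mathcal{E}(s;f)$ on an event of probability at least $1-\delta$, and combine it with the deterministic polynomial term from the previous paragraph.

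For (ii), I take expectations on both sides. The polynomial term passes through unchanged. For the square-root projection term, I apply Jensen's inequality $\mathbb{E}\sqrt{Y} \leq \sqrt{\mathbb{E}[Y]}$ (valid since $\mathcal{E}_{\bm{\Omega}}(s;f) \geq 0$) to pull the expectation inside the root, then use linearity of expectation and \Cref{lemma:probabilistic}\textit{(ii)} to replace $\mathbb{E}[\mathcal{E}_{\bm{\Omega}}(s;f)]$ by $\frac{k}{\ell-k-1}\mathcal{E}(s;f)$. All of the steps are direct appeals to the results already proved in this section, so I do not expect any genuine obstacle; the main points requiring care are ensuring the rank condition on $\bm{\Omega}_k$ needed by \Cref{lemma:structural} is satisfied (which holds almost surely for Gaussian $\bm{\Omega}$), and verifying that the polynomial term is indeed free of $\bm{\Omega}$ so it is unaffected by both the union bound in (i) and the expectation in (ii).
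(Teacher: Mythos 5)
Your proposal is correct and follows essentially the same route as the paper: combine \Cref{theorem:robust}, \Cref{lemma:2_times_polynomial_approx}, and \Cref{lemma:structural} into the deterministic structural bound, then invoke \Cref{lemma:probabilistic}(i) for the tail bound and \Cref{lemma:probabilistic}(ii) together with Jensen's inequality for the expectation bound. Your additional remarks — that $\bm{\Omega}_k$ has rank $k$ almost surely and that the polynomial-approximation term is bounded deterministically and hence unaffected by the probabilistic step — are correct and make explicit details the paper leaves implicit.
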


\begin{proof}
\textit{(\ref*{item:krylov_aware_tailbound})}: By applying \Cref{theorem:robust}, \Cref{lemma:structural}, and \Cref{lemma:2_times_polynomial_approx} we obtain the following structural bound %\David{I changed some stuff here. I used the notation that Tyler defined.}  
\begin{align}
\begin{split}
        \|f(\bm{A}) - \bm{Q}_s \llbracket f(\bm{T}_q)_{1:d_{s,\ell},1:d_{s,\ell}}\rrbracket_{k} \bm{Q}_s^\T\|_{\F} \leq &4\sqrt{\ell s}  \inf\limits_{p \in \mathbb{P}_{2r+1}}\|f(x)-p(x)\|_{L^{\infty}([\lambda_{\min},\lambda_{\max}])} +\\
        &\sqrt{\|f(\bm{\Lambda}_{n \setminus k })\|_{\F}^2 +  5 \mathcal{E}_{\bm{\Omega}}(s;f)}.
        \end{split}\label{eq:krylov_aware_structural}
\end{align}
Applying \Cref{lemma:probabilistic} yields \textit{(\ref*{item:krylov_aware_tailbound})} directly. 
\textit{(\ref*{item:krylov_aware_expectationbound})} follows from \Cref{lemma:probabilistic} and an application of Jensen's inequality.
\end{proof}

We conclude this section by commenting on the three terms appearing in the bounds in \Cref{theorem:krylov_aware}. 
The dependence on $4\sqrt{\ell s}  \inf\limits_{p \in \mathbb{P}_{2r+1}}\|f(x)-p(x)\|_{L^{\infty}([\lambda_{\min},\lambda_{\max}])}$ captures the inherent cost of approximating a quadratic form with $f(\bm{A})$ using the Lanczos method, as in done in Line 3 of \Cref{alg:krylow}. A similar term would also arise in an analysis of the Randomized SVD when matvecs are computed approximately using a Krylov subspace method.
The $\|f(\bm{\Lambda}_{n \setminus k})\|_{\F}$ term is due to the fact that our error can never be below the optimal rank $k$ approximation error. Finally, $\mathcal{E}(s;f)$ tells us that $\bm{Q}_s$ is a good orthonormal basis for low-rank approximation if there is a polynomial of degree at most $s-1$ that is large on the eigenvalues $\lambda_1,\ldots,\lambda_k$ (which correspond to the top subspace of $f(\bm{A})$) and is small on the eigenvalues $\lambda_{k+1},\ldots,\lambda_n$. 

As discussed in \Cref{section:structural}, one possible choice of polynomial would be an approximation to $f$, in which case $\mathcal{E}(s;f)$ would be close to $\|f(\bm{\Lambda}_{n \setminus k})\|_{\F}$. In this case, ignoring the polynomial approximation term, we would recover a bound almost identical to the standard Randomized SVD error bound. In particular, when matvecs with $f(\bm{A})$ are implemented exactly, Randomized SVD can be shown to have expected error \cite{rsvd}:
\begin{align*}
\sqrt{\|f(\bm{\Lambda}_{n \setminus k})\|_{\F}^2 + \frac{k}{\ell-k-1}\|f(\bm{\Lambda}_{n \setminus k})\|_{\F}^2}.
\end{align*}
\rev{When $f$ is a polynomial of degree at most $\max\{2r+1,s-1\}$, we can recover classical bounds for the randomized SVD.} However, other possibly polynomials can also be chosen, so we might have $\mathcal{E}(s;f)  \ll \|f(\bm{\Lambda}_{n \setminus k})\|_{\F}$. In the next section, we give an example involving the matrix exponential to better illustrate why this is often the case.

\subsection{Example bounds for the matrix exponential}\label{section:exponential}
\rev{
In order to obtain explicit bounds for the approximation quality of \Cref{alg:krylow} one must obtain an explicit upper bound for $\mathcal{E}(s;f)$. This can be done, for example, by choosing a specific polynomial $p$ to obtain
\begin{equation*}
    \mathcal{E}(s;f) \leq \|p(\bm{\Lambda}_{n \setminus k})\|_\F^2 \max\limits_{i=1,\ldots,k} \left|\frac{f(\lambda_i)}{p(\lambda_i)}\right|.
\end{equation*}
% If $f$ is a polynomial of degree less than $\max\{2r+1,s-1\}$ one can choose the polynomial in $\mathcal{E}(s;f)$ to be $f(x)g(x)$ for some polynomial $g \in \mathbb{P}_{s-1-\deg(f)}$ that is large on the diagonal entries of $\bm{\Lambda}_k$ and satisfies $|g| \leq 1$ on the diagonal entries of $\bm{\Lambda}_{n\setminus k}$. For such functions, \Cref{theorem:krylov_aware} suggests
% \begin{equation*}
%     \mathbb{E}\|f(\bm{A}) - \ALG_k(s,r;f)\|_\F \leq \sqrt{1 + \frac{5k}{\ell-k-1} \cdot \frac{1}{\min\limits_{i=1,\ldots,k} |g(\lambda_i)|}} \|f(\bm{\Lambda}_{n \setminus k})\|_\F
% \end{equation*}
For monotone functions, a natural choice for $p$ is a scaled a shifted Chebyshev polynomial, as suggested in \cite{MM15, tropp2023randomized}; this idea is also standard in the context of eigenvalue approximation \cite{shmuel,paige1971computation,saad1980rates,underwood1975iterative}.}

One \rev{monotonic} matrix function for which it is often desirable to obtain a low-rank approximation is the matrix exponential, $\exp(\bm{A})$. \rev{As highlighted in \Cref{section:motivation}, this} task arises in tasks ranging from network analysis \cite{hpp}, to quantum thermodynamics \cite{chen_hallman_23,epperly2023xtrace}, to solving PDEs \cite{persson_kressner_23}. \rev{We emphasize that in many cases $\exp(\bm{A})$ admits an accurate low-rank approximation, even if $\bm{A}$ does not. Furthermore, at first glance one might think that one must require that the eigenvalues of $\bm{A}$ be negative for $\exp(\bm{A})$ to admit an accurate low-rank approximation. However, this is not the case. For any symmetric matrix $\bm{A}$ with eigenvalues $\lambda_1 \geq \lambda_2 \geq \cdots \geq \lambda_n$, consider the shifted matrix $\bm{A} - \lambda_1 \bm{I}$, whose eigenvalues are all non-positive. Since $\exp(\bm{A} - \lambda_1 \bm{I}) = \exp(-\lambda_1) \exp(\bm{A})$, the matrix exponentials differ only by a scalar multiple. Consequently, any relative low-rank approximation error remains unchanged under shifts. Moreover, \Cref{alg:krylow} is unaffected by such a shift since $\mathcal{K}_q(\bm{A},\bm{\Omega}) = \mathcal{K}_q(\bm{A} - \lambda_1 \bm{I},\bm{\Omega})$.} 

In this section, we do a deeper dive into our main bounds for Krylov aware low-rank approximation when applied to the matrix exponential to better understand how they compare to bounds obtainable using the more naive approach of directly combining the randomized SVD with a black-box matrix-vector product approximation algorithm for $\exp(\bm{A})$.  For conciseness of exposition, we focus on expectation bounds, and defer details of proofs to \Cref{section:appendix}.

We first demonstrates that, as discussed in the previous section, by choosing $p$ in $\mathcal{E}(s;\exp(x))$ to be a polynomial approximation to $\exp(x)$, we can recover the bounds of the randomized SVD \cite[Theorem 10.5]{rsvd}, up to small factors accounting for the fact that $\exp(x)$ is not a polynomial. %The proof is deferred to \Cref{section:appendix}.

% By constructing particular polynomials of degree $<s$, we can obtain more explicit bounds that depend only on how accurately $f(x)$ can be approximated by polynomials. 
% These bounds are reminiscent of standard bounds that might be obtained if we could do exact matvecs with $f(\bm{A})$, except that they have small error terms accounting for the fact that $f(x)$ might not be a polynomial. We will provide such bounds for $\exp(\bm{A})$. For simplicity, we focus on expectation bounds. However, using an almost identical argument, one can obtain the corresponding tailbounds.

% We begin with the following result, which shows that by \Cref{theorem:krylov_aware} can recover the bounds of the randomized SVD \cite[Theorem 10.5]{rsvd}. The proof of deferred to the appendix for clarity of exposition. 

\begin{corollary}\label{theorem:rsvd_like_bound}
Consider the setting of \Cref{theorem:krylov_aware} with $f(x) = \exp(x)$. Then, if $\ell-k\geq 2$ and $\gamma_{1,n} := \lambda_{\max} - \lambda_{\min} = \lambda_1-\lambda_n$, and $s \geq e\gamma_{1,n}$ we have
\begin{align*}
     &\mathbb{E}\|\exp(\bm{A}) - \ALG_k(s,r;\exp(x))\|_{\F} 
     \\&\hspace{2em}
     \leq  \frac{\sqrt{\ell s}\gamma_{1,n}^{2r+2}}{2^{4r+1}(2r+2)!}\|\exp(\bm{A})\|_2 + \sqrt{1 +  \frac{1}{(1-\frac{\gamma_{1,n}^s}{s!})^2}\frac{5k}{\ell - k - 1}} \|\exp(\bm{\Lambda}_{n \setminus k })\|_{\F}.
\end{align*}
\end{corollary}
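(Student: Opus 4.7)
The plan is to apply the expectation bound in \Cref{theorem:krylov_aware} (item \ref{item:krylov_aware_expectationbound}) with $f(x) = \exp(x)$ and then control the two resulting terms separately: the Lanczos approximation error $4\sqrt{\ell s}\inf_{p\in\mathbb{P}_{2r+1}}\|e^x-p(x)\|_{L^\infty([\lambda_{\min},\lambda_{\max}])}$ and the quantity $\mathcal{E}(s;\exp(x))$ appearing under the square root. The target is to show that the first is at most $\frac{\sqrt{\ell s}\gamma_{1,n}^{2r+2}}{2^{4r+1}(2r+2)!}\|\exp(\bm{A})\|_2$ and that $\mathcal{E}(s;\exp(x)) \leq \|\exp(\bm{\Lambda}_{n\setminus k})\|_\F^2/(1-\gamma_{1,n}^s/s!)^2$; pulling $\|\exp(\bm{\Lambda}_{n\setminus k})\|_\F$ out of the square root then yields the stated bound.

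For the polynomial approximation term, I would invoke the standard error bound for degree-$d$ Chebyshev interpolation on an interval $[a,b]$, namely $\|f-p_d\|_{L^\infty([a,b])} \leq \frac{(b-a)^{d+1}}{2^{2d+1}(d+1)!}\|f^{(d+1)}\|_{L^\infty([a,b])}$. Since $|(e^x)^{(d+1)}|= e^x$ is maximized at $\lambda_{\max}$, we have $\|f^{(d+1)}\|_{L^\infty([\lambda_{\min},\lambda_{\max}])} = e^{\lambda_{\max}} = \|\exp(\bm{A})\|_2$. Setting $d=2r+1$ so that $d+1=2r+2$ and $2d+1 = 4r+3$, and multiplying by the prefactor $4\sqrt{\ell s}$ (noting $4/2^{4r+3} = 1/2^{4r+1}$), produces the first term of the claim.

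For $\mathcal{E}(s;\exp(x))$, the key step is to choose the polynomial in \eqref{eqn:min_ratio} as the degree-$(s-1)$ Taylor expansion of $e^x$ centered at $\lambda_{\min}$, namely $p(x) = e^{\lambda_{\min}}\,q(x-\lambda_{\min})$ with $q(y) = \sum_{j=0}^{s-1} y^j/j!$. For any eigenvalue $\lambda_i$, write $y_i = \lambda_i - \lambda_{\min} \in [0,\gamma_{1,n}]$; the remainder $e^{y_i} - q(y_i) = \sum_{k\geq 0} y_i^{s+k}/(s+k)!$ is a sum of non-negative terms, and using the factor-wise inequality $(s+k)! \geq s!\,k!$ one obtains $e^{y_i} - q(y_i) \leq (y_i^s/s!)\,e^{y_i} \leq (\gamma_{1,n}^s/s!)\,e^{y_i}$. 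This yields the two-sided bound $(1-\gamma_{1,n}^s/s!)\,e^{y_i} \leq q(y_i) \leq e^{y_i}$, which translates into $|e^{\lambda_i}/p(\lambda_i)| \leq 1/(1-\gamma_{1,n}^s/s!)$ uniformly in $i$ and $|p(\lambda_i)| \leq e^{\lambda_i}$ for all $i$, so $\|p(\bm{\Lambda}_{n\setminus k})\|_\F \leq \|\exp(\bm{\Lambda}_{n\setminus k})\|_\F$. Substituting into \eqref{eqn:min_ratio} gives the required upper bound on $\mathcal{E}(s;\exp(x))$. The hypothesis $s \geq e\gamma_{1,n}$ is exactly what is needed, via Stirling, to ensure $\gamma_{1,n}^s/s! < 1$ so the denominator $1-\gamma_{1,n}^s/s!$ is positive.

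The main obstacle is the choice of center for the Taylor expansion in step 3. A more obvious choice, such as centering at $0$ or at $\lambda_{\max}$, leads to a shifted argument $y$ of either sign; bounding the remainder using the alternating-series estimate then forces an extra factor of $e^{\gamma_{1,n}}$ inside $1/(1-\cdot)$ and destroys the bound. Centering at $\lambda_{\min}$ keeps $y \geq 0$, which makes the tail a sum of \emph{positive} terms and allows the clean multiplicative comparison $\mathrm{tail} \leq (\gamma^s/s!)\,e^y$ rather than a crude additive one. Once both bounds are in hand, a direct combination through \Cref{theorem:krylov_aware}(\ref{item:krylov_aware_expectationbound}) completes the proof.
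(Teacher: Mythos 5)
Your proposal is correct and follows essentially the same route as the paper's own proof: the same Chebyshev interpolation bound for the $\inf_{p\in\mathbb{P}_{2r+1}}$ term, and the same choice of the degree-$(s-1)$ Taylor polynomial of $e^x$ centered at $\lambda_{\min}$ (up to an immaterial scaling by $e^{\lambda_{\min}}$, which cancels in $\mathcal{E}(s;\exp(x))$) with the identical remainder estimate $0\le e^{y}-q(y)\le (y^s/s!)e^{y}$ and Stirling to guarantee $\gamma_{1,n}^s/s!<1$. Your explicit justification of the remainder bound via $(s+k)!\ge s!\,k!$ is a detail the paper leaves implicit, but the argument is the same.
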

%\tyler{if we're doing a specific example idk if we need the previous one as well?}
Note that as $\frac{\sqrt{\ell s}\gamma_{1,n}^{2r+2}}{2^{4r+1}(2r+2)!}\|\exp(\bm{A})\|_2 \to 0$ and $\frac{\gamma_{1,n}^s}{s!} \to 0$, we recover the classical bound for the randomized SVD \cite[Theorem 10.5]{rsvd}. Furthermore, these two terms converge \emph{superexponentially} to $0$ as $s,r \to +\infty$. However, due to the $\frac{5k}{\ell -k -1}$, the above bound only implies a subexponential convergence to the error of the optimal rank $k$ approximation error, $\|\exp(\bm{\Lambda}_{n \setminus k })\|_{\F}$, in terms of the number of required matrix-vector products with $\bm{A}$, which scales as $\ell (r+s)$.

% To prove \Cref{theorem:rsvd_like_bound} we chose $p$ in $\mathcal{E}(s;\exp(x))$ to be a polynomial approximation to $\exp(x)$. 
However, as argued in \Cref{section:structural} there are other polynomials that can achieve a significantly tighter upper bound of $\mathcal{E}(s;\exp(x))$. For example, by choosing $p$ in $\mathcal{E}(s;\exp(x))$ to be a scaled and shifted Chebyshev polynomial we can show the output of \Cref{alg:krylow} converges \emph{exponentially} in $s$ to the optimal low-rank approximation of $\exp(\bm{A})$, even if $\ell$ stays fixed. 
 This demonstrates why \Cref{alg:krylow} is expected to return a better low-rank approximation compared to applying the randomized SVD immediately on $\exp(\bm{A})$. In particular, we have the following result.

\begin{corollary}\label{theorem:fast_convergence}
Consider the setting of \Cref{theorem:krylov_aware} with $f(x) = \exp(x)$. Define $\gamma_{i,j} = \lambda_{i} - \lambda_{j}$ to be the gap between the $i^{\text{th}}$ and $j^{\text{th}}$ largest eigenvalues. \rev{$\Gamma=\min\left\{1, 2 \frac{\gamma_{k,k+1}}{\gamma_{k+1,n}}\right\}$.} Then, if $\ell-k\geq 2$ and $s \geq 2 + \frac{\gamma_{1,k}}{\log\left(\frac{\gamma_{1,n}}{\gamma_{k,n}}\right)}$%$\gamma = \lambda_{\max} - \lambda_{\min} = \lambda_1 - \lambda_n$, and $\beta = \frac{\lambda_{k} - \lambda_{k+1}}{\lambda_{k} + \lambda_{k+1} - 2\lambda_{\min}}$ we have
% \begin{align*}
%      &\mathbb{E}\|\exp(\bm{A}) - \ALG_k(s,r;\exp(x))\|_{\F} 
%      \leq  \frac{\sqrt{\ell s}\gamma_{1,n}^{2r+2}}{2^{4r+1}(2r+2)!}\|\exp(\bm{A})\|_2 
%      \\&\hspace{5em}+ \sqrt{1 +  \exp(2\gamma_{k,n}) \cdot 2^{-2(s-2)\sqrt{\min\left\{1, 2 \frac{\gamma_{k,k+1}}{\gamma_{k+1,n}}\right\}}}\frac{80k}{\ell - k - 1}} \|\exp(\bm{\Lambda}_{n \setminus k })\|_{\F}.
% \end{align*}
\begin{align*}
     &\mathbb{E}\|\exp(\bm{A}) - \ALG_k(s,r;\exp(x))\|_{\F} 
     \leq  \frac{\sqrt{\ell s}\gamma_{1,n}^{2r+2}}{2^{4r+1}(2r+2)!}\|\exp(\bm{A})\|_2 
     \\&\hspace{5em}+ \sqrt{1 +  \rev{2^{-2(s-2)\sqrt{\Gamma} + 3\gamma_{k,n}}}\frac{80k}{\ell - k - 1}} \|\exp(\bm{\Lambda}_{n \setminus k })\|_{\F}.
\end{align*}
\rev{Consequently, if $s \geq 2 + \frac{\gamma_{1,k}}{\log\left(\frac{\gamma_{1,n}}{\gamma_{k,n}}\right)}$ and
\begin{align*}
    \ell &= 2k+1 = O(k),\\
    s &= O\left(\frac{\log(1/\varepsilon) + \gamma_{k,n}}{\sqrt{\Gamma}}\right),\\
    r &= O\left(\gamma_{1,n} +\log(1/\tau)+ \log\left(\frac{k(\log(1/\varepsilon) + \gamma_{k,n})}{\Gamma}\right)\right),
\end{align*}
for $\varepsilon, \tau \in (0,1)$, then we have
\begin{equation*}
    \mathbb{E}\|\exp(\bm{A}) - \ALG_k(s,r;\exp(x))\|_{\F} 
     \leq \tau\|\exp(\bm{A})\|_2 + (1+\varepsilon)\|\exp(\bm{\Lambda}_{n \setminus k})\|_\F.
\end{equation*}}
\end{corollary}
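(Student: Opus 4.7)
The plan is to start from the expectation version of \Cref{theorem:krylov_aware}\textit{(\ref*{item:krylov_aware_expectationbound})} and control the two resulting terms separately: the polynomial approximation term and the $\mathcal{E}(s;\exp(x))$ term.

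For the first term, I would reuse the exact same argument as in \Cref{theorem:rsvd_like_bound}: apply the classical Chebyshev truncation bound for best polynomial approximation of $\exp$ on $[\lambda_{\min},\lambda_{\max}]$ by polynomials of degree $2r+1$, using that $|\exp^{(2r+2)}|$ is bounded by $\|\exp(\bm{A})\|_2$ on this interval. After multiplying by the $4\sqrt{\ell s}$ prefactor in \Cref{theorem:krylov_aware}, this contributes precisely $\tfrac{\sqrt{\ell s}\gamma_{1,n}^{2r+2}}{2^{4r+1}(2r+2)!}\|\exp(\bm{A})\|_2$.

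The substantive step is the bound on $\mathcal{E}(s;\exp(x))$. Here I would pick $p$ to be a \emph{scaled and shifted Chebyshev polynomial}, specifically $p(x)=c\cdot T_{s-2}(\phi(x))$ where $\phi$ is the affine map sending $[\lambda_n,\lambda_{k+1}]$ to $[-1,1]$ and $c$ is a normalization (of order $\exp(\lambda_n)$) chosen to balance the two factors in \cref{eqn:min_ratio}. The key identity that turns $\|p(\bm{\Lambda}_{n\setminus k})\|_{\F}^2$ into a multiple of $\|\exp(\bm{\Lambda}_{n\setminus k})\|_{\F}^2$ is
\begin{equation*}
\|p(\bm{\Lambda}_{n\setminus k})\|_{\F}^2 = \sum_{j>k}\Bigl(\tfrac{p(\lambda_j)}{\exp(\lambda_j)}\Bigr)^{\!2}\exp(2\lambda_j) \leq \max_{j>k}\Bigl(\tfrac{p(\lambda_j)}{\exp(\lambda_j)}\Bigr)^{\!2}\|\exp(\bm{\Lambda}_{n\setminus k})\|_{\F}^2.
\end{equation*}
Since $|T_{s-2}(\phi(x))|\leq 1$ on $[\lambda_n,\lambda_{k+1}]$, the first maximum is controlled by $\exp(-\gamma_{j,n})$-type factors. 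For $i\leq k$, $\phi(\lambda_i)\geq 1+2\gamma_{k,k+1}/\gamma_{k+1,n}$, so $T_{s-2}(\phi(\lambda_i))=\cosh\!\bigl((s-2)\operatorname{arccosh}(\phi(\lambda_i))\bigr)$ grows like $\exp\!\bigl((s-2)\operatorname{arccosh}(1+2\gamma_{k,k+1}/\gamma_{k+1,n})\bigr)$. Using $\operatorname{arccosh}(1+\eta)\geq \sqrt{2\eta}$ when $\eta<1/2$ and $\operatorname{arccosh}(1+\eta)\geq \log 2$ when $\eta\geq 1/2$ — exactly the two regimes encoded by $\Gamma=\min\{1,2\gamma_{k,k+1}/\gamma_{k+1,n}\}$ — and absorbing the residual $\exp(\lambda_i-\lambda_n)$ factor into $\exp(\gamma_{k,n})$ (the $\exp(\gamma_{i,k})$ slack for $i<k$ is where the $2^{3\gamma_{k,n}}$ constant arises), I expect the clean bound
\begin{equation*}
\mathcal{E}(s;\exp(x)) \leq 16 \cdot 2^{-2(s-2)\sqrt{\Gamma}+3\gamma_{k,n}}\|\exp(\bm{\Lambda}_{n\setminus k})\|_{\F}^2.
\end{equation*}
The lower bound $s\geq 2+\gamma_{1,k}/\log(\gamma_{1,n}/\gamma_{k,n})$ is precisely what guarantees the Chebyshev-growth factor dominates the residual exponential of $\gamma_{1,k}$. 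Plugging this into \Cref{theorem:krylov_aware}\textit{(\ref*{item:krylov_aware_expectationbound})} with the constant $5k/(\ell-k-1)\cdot 16 = 80k/(\ell-k-1)$ yields the first displayed inequality.

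For the ``Consequently'' part, I would set $\ell=2k+1$ so that $k/(\ell-k-1)=1$, then solve $80\cdot 2^{-2(s-2)\sqrt{\Gamma}+3\gamma_{k,n}}\leq \varepsilon^2$ for $s$, giving $s=O((\log(1/\varepsilon)+\gamma_{k,n})/\sqrt{\Gamma})$, and finally use $(2r+2)!\geq ((2r+2)/e)^{2r+2}$ to solve $\tfrac{\sqrt{\ell s}\gamma_{1,n}^{2r+2}}{2^{4r+1}(2r+2)!}\leq\tau$ for $r$, yielding $r=O(\gamma_{1,n}+\log(1/\tau)+\log(\sqrt{\ell s}))$, and substituting the chosen $\ell,s$. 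The main obstacle I anticipate is the careful Chebyshev calculation for $\mathcal{E}(s;\exp(x))$: producing the clean $2^{-2(s-2)\sqrt{\Gamma}}$ decay with the right absolute constants requires splitting into the two $\Gamma$-regimes, choosing the normalization $c$ consistently across these regimes, and tracking the $\exp(\gamma_{i,n})$ terms cleanly enough that only a benign $2^{3\gamma_{k,n}}$ slack remains; everything else is bookkeeping once this bound is in hand.
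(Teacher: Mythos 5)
Your proposal is correct and follows essentially the same route as the paper: the polynomial-approximation term is handled exactly as in \Cref{theorem:rsvd_like_bound}, and $\mathcal{E}(s;\exp(x))$ is bounded by plugging in a Chebyshev polynomial scaled and shifted to $[\lambda_n,\lambda_{k+1}]$, with the two $\Gamma$-regimes coming from the standard Chebyshev growth estimate and the hypothesis on $s$ ensuring the ratio $\exp(\lambda_i-\lambda_n)/T_{s-2}(\phi(\lambda_i))$ is maximized at $i=k$ (the paper makes this last point rigorous via a convexity/endpoint argument that your sketch asserts but does not spell out). The only cosmetic difference is that the paper takes $p(x)=(1+x-\lambda_n)T_{s-2}(1+2\eta(x))$ and cancels the linear factor using $1+x\leq \exp(x)$, whereas you use a constant normalization $c\approx \exp(\lambda_n)$ together with the max-ratio identity; these are equivalent and yield the same constants.
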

\rev{We believe that one can derive bounds that are independent of the eigenvalue gap $\Gamma$, reminiscent of the bounds in \cite{MM15}. However, such bounds would likely require very different proof techniques.}
\section{\rev{Single-vector Krylov-aware low-rank approximation}}\label{section:singlevector}
\rev{In this section we consider a variant of \Cref{alg:krylow} that uses a single vector $\bm{\omega} \in \mathbb{R}^n$ as a starting block, rather than a block of vectors $\bm{\Omega}$. The resulting algorithm is given in \Cref{alg:krylow_single}, and requires at most $k+s+r$ matvecs with $\bm{A}$ to compute an approximation to $f(\bm{A})$. This compares favourably to \Cref{alg:krylow}, which requires $k(s+r)$ matvecs. The overall computational cost of \Cref{alg:krylow_single} is $k+s+r$ matvecs and $O(n(k+s+r)^2)$ additional operations. 
\begin{algorithm}
\caption{Single-vector Krylov-aware low-rank approximation}
\label{alg:krylow_single}
\textbf{input:} Symmetric $\bm{A} \in \mathbb{R}^{n \times n}$. Rank $k$. Matrix function $f: \mathbb{R} \to \mathbb{R}$. Degrees $s$ and $r$. \\
\textbf{output:} Low-rank approximation to $f(\bm{A})$: $\bm{Q}_{k+s} \bm{X} \bm{Q}_{k+s}^\T$ or $\bm{Q}_{k+s} \llbracket \bm{X} \rrbracket_k  \bm{Q}_{k+s}^\T$%$\bm{Q}_s \llbracket f(\bm{T}_q)_{1:d_s,1:d_s} \rrbracket_k  \bm{Q}_s^\T$. 
\begin{algorithmic}[1]
    \State Sample a standard Gaussian random vector $\bm{\omega}$.
    \State Run \cref{alg:block_lanczos} for $q=k+s+r$ iterations to obtain an orthonormal basis $\bm{Q}_{k+s}$ for $\mathcal{K}_{k+s}(\bm{A},\bm{\omega})$ and a tridiagonal matrix $\bm{T}_q$.\label{line:single_basis}
    \State Compute $\bm{X} = f(\bm{T}_q)_{1:d_{k+s,1},1:d_{k+s,1}}$ where $d_{k+s,1} = \dim(\mathcal{K}_{k+s}(\bm{A},\bm{\omega}))$. \Comment{$\approx \bm{Q}_{k+s}^\T f(\bm{A}) \bm{Q}_{k+s}$} 
    %\State Compute the eigenvalue decomposition of $\bm{X} = \bm{V} \bm{D} \bm{V}^\T$, where the eigenvalues in $\bm{D}$ are ordered in descending absolute magnitude. 
    \State \textbf{return} $\widehat{\textsf{\textup{ALG}}}(s,r;f) = \bm{Q}_{k+s} \bm{X}  \bm{Q}_{k+s}^\T$ or $\widehat{\textsf{\textup{ALG}}}_k(s,r;f) = \bm{Q}_{k+s} \llbracket \bm{X} \rrbracket_k  \bm{Q}_{k+s}^\T$.\label{line:single_X}
\end{algorithmic}
\end{algorithm}}

\rev{Many classical eigenvalue methods use Krylov subspace methods with small starting blocks, often as small as $\ell = 1$ \cite{shmuel,paige1971computation}. For instance, MATLAB's \texttt{eigs} is based on a Krylov subspace initialized with a (random) starting block of size $\ell = 1$,\footnote{For reproducibility, MATLAB uses private random number stream.} even when several eigenvalues are sought. In contrast, Krylov subspace methods for low-rank approximation have historically been analyzed for large block-sizes, i.e., $\ell \geq k$ \cite{MM15,tropp2023randomized,drineasipsenkontopoulouismail,rsvd,gu_subspace}. The single-vector Lanczos method for low-rank approximation was recently analyzed in \cite{MeyerMuscoMusco:2024}. Empirically, the single-vector Lanczos method often produces more accurate low-rank approximations using the same number of matvecs, though the block-Lanczos method comes with other advantages, such as being easier to parallelize. Recent work has also studied intermediate block-sizes $1 \leq \ell \leq k$ in the context of low-rank approximation, which can balance accuracy and efficiency \cite{moderateblocksize}. In what follows, we will focus on the case when the block-size is $\ell = 1$, but we suspect that the analysis can be extended to any block-size.}

\rev{A key observation made in \cite{MeyerMuscoMusco:2024} is that the single-vector Lanczos method can be viewed as a block-Lanczos method with a highly structured starting block. Specifically, if $\bm{\omega} \in \mathbb{R}^{n}$ is a starting vector, then by \Cref{lemma:krylovkrylov} we have
\begin{equation*}
    \mathcal{K}_{s+k}(\bm{A}, \bm{\omega}) = \mathcal{K}_{s}(\bm{A}, \widehat{\bm{\Omega}}) \quad \text{ where } \quad \widehat{\bm{\Omega}} := \begin{bmatrix} \bm{\omega} & \bm{A} \bm{\omega} & \cdots & \bm{A}^{k-1} \bm{\omega} \end{bmatrix}.
\end{equation*}
The benefit of this observation was that the existing analysis for the block-Lanczos algorithm for low-rank approximation could be used to analyze the single-vector Lanczos method for low-rank approximation, provided one could analyze the behavior of the structured starting block $\widehat{\bm{\Omega}}$. To see this, note that running  \Cref{alg:krylow_single} is equivalent to running \Cref{alg:krylow} with $\widehat{\bm{\Omega}}$ instead of $\bm{\Omega}$. Hence, if we define 
\begin{equation*}
    \widehat{\bm{\Omega}}_k := \bm{U}_k^\T \widehat{\bm{\Omega}}, \quad \widehat{\bm{\Omega}}_{n \setminus k} := \bm{U}_{n \setminus k}^\T \widehat{\bm{\Omega}},
\end{equation*}
then, assuming $\rank(\widehat{\bm{\Omega}}_k) = k$ and applying \Cref{theorem:robust}, \Cref{lemma:structural},\footnote{Recall that \Cref{lemma:structural} does not assume anything about the randomness of the starting block.} and \Cref{lemma:2_times_polynomial_approx} we obtain the following structural bound
\begin{align}
\begin{split}
        \|f(\bm{A}) -\widehat{\textsf{\textup{ALG}}}_k(s,r;f)\|_{\F} \leq &4\sqrt{k+s}  \inf\limits_{p \in \mathbb{P}_{2r+1}}\|f(x)-p(x)\|_{L^{\infty}([\lambda_{\min},\lambda_{\max}])} +\\
        &\sqrt{\|f(\bm{\Lambda}_{n \setminus k })\|_{\F}^2 +  5 \min\limits_{p \in \mathbb{P}_{s-1}}\left[\|p(\bm{\Lambda}_{n \setminus k}) \widehat{\bm{\Omega}}_{n \setminus k} \widehat{\bm{\Omega}}_k^{-1}\|_{\F}^2\max\limits_{i=1,\ldots,k} \left|\frac{f(\lambda_i)}{p(\lambda_i)}\right|^2\right]}.
        \end{split}\label{eq:krylov_aware_structural_single}
\end{align}
Bounds for $\widehat{\textsf{\textup{ALG}}}(s,r;f)$ can be obtained analogously. To obtain a probabilistic bound, it remains to control $\|p(\bm{\Lambda}_{n \setminus k}) \widehat{\bm{\Omega}}_{n \setminus k} \widehat{\bm{\Omega}}_k^{-1}\|_{\F}^2$ for a fixed polynomial $p$, which can be done using the results from \cite{MeyerMuscoMusco:2024}. In particular, we have the following result, whose proof is deferred to \Cref{section:appendixB}.\footnote{Unfortunately, the proof technique cannot be used to prove expectation bounds, since the proof relies on bounding the inverse of a $\chi_1^2$ random variable, whose expectation does not exist.}}
\rev{\begin{theorem}\label{theorem:singlevector}
    Consider $\bm{A} \in \mathbb{R}^{n \times n}$ as defined in \eqref{eq:A} with smallest and largest eigenvalues $\lambda_{\min}$ and $\lambda_{\max}$ respectively. Define $\gamma_{\min} = \min\limits_{\substack{i,j = 1,\ldots k \\ i \neq j}}\frac{|\lambda_j-\lambda_i|}{|\lambda_{\max} - \lambda_{\min}|}$. Then, with $\mathcal{E}(s;f)$ as defined in \cref{eqn:min_ratio} the following inequality holds with probability at least $1-\delta$
     \begin{align*}
        \|f(\bm{A}) - \widehat{\ALG_k}(s,r;f) \|_{\F} \leq &4\sqrt{k+s}  \inf\limits_{p \in \mathbb{P}_{2r+1}}\|f(x)-p(x)\|_{L^{\infty}([\lambda_{\min},\lambda_{\max}])}+\\%\epsilon_1(r;f) +
        &\sqrt{\|f(\bm{\Lambda}_{n \setminus k })\|_{\F}^2 +  \widehat{C}_{\delta}\frac{k^4}{\gamma_{\min}^{2(k-1)}} \mathcal{E}(s;f)},
    \end{align*}
     where $\widehat{C}_{\delta} = \frac{2 \pi (1+6\log(2/\delta))}{\delta^2}$.
\end{theorem}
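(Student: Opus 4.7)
My plan is to build directly on the structural bound \eqref{eq:krylov_aware_structural_single} already assembled in the excerpt from \Cref{theorem:robust}, \Cref{lemma:structural}, and \Cref{lemma:2_times_polynomial_approx}. Once that deterministic inequality is in place, the only randomness to control is the Gaussian starting vector $\bm{\omega}$ entering the Krylov matrix $\widehat{\bm{\Omega}}=[\bm{\omega}\ \bm{A}\bm{\omega}\ \cdots\ \bm{A}^{k-1}\bm{\omega}]$. The theorem will follow if I show that for any fixed $p\in\mathbb{P}_{s-1}$, the inequality $\|p(\bm{\Lambda}_{n\setminus k})\widehat{\bm{\Omega}}_{n\setminus k}\widehat{\bm{\Omega}}_k^{-1}\|_{\F}^{2}\leq \widehat{C}_\delta\,\tfrac{k^{4}}{\gamma_{\min}^{2(k-1)}}\,\|p(\bm{\Lambda}_{n\setminus k})\|_{\F}^{2}$ holds with probability at least $1-\delta$, and then specialize $p$ to the minimizer in \eqref{eqn:min_ratio}. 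Before doing so I would record that $\rank(\widehat{\bm{\Omega}}_k)=k$ almost surely so that \Cref{lemma:structural} applies, and that if $\gamma_{\min}=0$ the claim is vacuous.

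The first substantive step is to expose the Krylov structure in the style of \cite{MeyerMuscoMusco:2024}. Writing $\bm{y}:=\bm{U}_k^{\T}\bm{\omega}$ and $\bm{z}:=\bm{U}_{n\setminus k}^{\T}\bm{\omega}$, which are independent standard Gaussian vectors by rotational invariance, a direct computation using $\bm{U}_k^{\T}\bm{A}^j = \bm{\Lambda}_k^j\bm{U}_k^{\T}$ gives $\widehat{\bm{\Omega}}_k = \diag(\bm{y})\,\bm{V}$ and $\widehat{\bm{\Omega}}_{n\setminus k} = \diag(\bm{z})\,\bm{W}$, where $\bm{V}\in\mathbb{R}^{k\times k}$ is the Vandermonde matrix with nodes $\lambda_1,\ldots,\lambda_k$ and $\bm{W}\in\mathbb{R}^{(n-k)\times k}$ has entries $W_{ij}=\lambda_{k+i}^{j-1}$. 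Since the columns of $\bm{V}^{-1}$ encode the coefficients of the Lagrange polynomials $L_j$ at the nodes $\lambda_1,\ldots,\lambda_k$, we have $(\bm{W}\bm{V}^{-1})_{ij} = L_j(\lambda_{k+i})$, and hence $\widehat{\bm{\Omega}}_{n\setminus k}\widehat{\bm{\Omega}}_k^{-1} = \diag(\bm{z})\,\bm{L}\,\diag(\bm{y})^{-1}$ with $L_{ij}=L_j(\lambda_{k+i})$.

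From this representation I would peel off a deterministic Lagrange estimate and two independent Gaussian estimates. Expanding the Frobenius norm entrywise and pulling out the relevant maxima gives
\[
\|p(\bm{\Lambda}_{n\setminus k})\widehat{\bm{\Omega}}_{n\setminus k}\widehat{\bm{\Omega}}_k^{-1}\|_{\F}^{2} \;\leq\; \Big(\max_{j}\tfrac{1}{y_{j}^{2}}\Big)\Big(\max_{i}\sum_{j}L_{j}(\lambda_{k+i})^{2}\Big)\sum_{i} p(\lambda_{k+i})^{2}z_{i}^{2}.
\]
The product formula $L_j(\mu)=\prod_{\ell\neq j}(\mu-\lambda_\ell)/(\lambda_j-\lambda_\ell)$ together with $|\mu-\lambda_\ell|\leq\lambda_{\max}-\lambda_{\min}$ yields $|L_j(\mu)|\leq\gamma_{\min}^{-(k-1)}$ for every $\mu\in[\lambda_{\min},\lambda_{\max}]$, and so the Lagrange factor is at most $k\gamma_{\min}^{-2(k-1)}$. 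For the Gaussian factors, a one-dimensional anticoncentration estimate $\mathbb{P}(|y_j|\leq \epsilon)\leq\epsilon\sqrt{2/\pi}$ combined with a union bound produces $\max_j 1/y_j^2\leq 8k^2/(\pi\delta^2)$ with probability $1-\delta/2$, while a Laurent-Massart (or Hanson-Wright) tail bound applied to the quadratic form $\bm{z}^{\T}p(\bm{\Lambda}_{n\setminus k})^{2}\bm{z}$ gives $\sum_i p(\lambda_{k+i})^2 z_i^2\leq(1+6\log(2/\delta))\|p(\bm{\Lambda}_{n\setminus k})\|_{\F}^{2}$ with probability $1-\delta/2$. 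A final union bound and multiplication of the three factors recover the stated constant $\widehat{C}_\delta=2\pi(1+6\log(2/\delta))/\delta^{2}$.

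The main obstacle I anticipate is the handling of $\max_j 1/y_j^2$. Because $1/Y^2$ for $Y\sim\mathcal{N}(0,1)$ has infinite expectation, this term forces the $1/\delta^2$ dependence and is precisely the reason the result can only be stated as a tail bound (as noted in the footnote of the excerpt); in particular the same approach cannot yield the kind of expectation bound given in \Cref{theorem:krylov_aware}(ii). A secondary quantitative difficulty is tracking numerical constants carefully through the Gaussian density at zero, the union bound over $k$ coordinates, the Lagrange product bound, and the Laurent-Massart inequality, since any slack in these steps inflates $\widehat{C}_\delta$ beyond the advertised $2\pi(1+6\log(2/\delta))/\delta^2$.
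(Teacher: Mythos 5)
Your proposal is correct and rests on the same overall strategy as the paper's proof: both reduce, via the structural bound \eqref{eq:krylov_aware_structural_single}, to showing that for a fixed $p\in\mathbb{P}_{s-1}$ one has $\|p(\bm{\Lambda}_{n\setminus k})\widehat{\bm{\Omega}}_{n\setminus k}\widehat{\bm{\Omega}}_k^{-1}\|_\F^2 \leq \widehat{C}_{\delta}k^4\gamma_{\min}^{-2(k-1)}\|p(\bm{\Lambda}_{n\setminus k})\|_\F^2$ with probability $1-\delta$, and both use the same three ingredients: the Lagrange bound $|L_j(\mu)|\le\gamma_{\min}^{-(k-1)}$ on $[\lambda_{\min},\lambda_{\max}]$, anticoncentration of $\min_j|y_j|$ for the Gaussian coordinates $\bm{y}=\bm{U}_k^\T\bm{\omega}$, and a one-sided chi-squared tail bound for $\|p(\bm{\Lambda}_{n\setminus k})\bm{z}\|_2^2$ (the paper packages the latter two as \Cref{lemma:intermediatebound}). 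The middle of the argument differs in a worthwhile way. The paper first pays $\|\cdot\|_\F^2\le k\|\cdot\|_2^2$, rewrites the spectral norm as a supremum over $g\in\mathbb{P}_{k-1}$ of $\|p(\bm{\Lambda}_{n\setminus k})g(\bm{\Lambda}_{n\setminus k})\bm{z}\|_2^2/\|g(\bm{\Lambda}_k)\bm{y}\|_2^2$, and only then Lagrange-interpolates $g$. You instead factor $\widehat{\bm{\Omega}}_k=\diag(\bm{y})\bm{V}$ and $\widehat{\bm{\Omega}}_{n\setminus k}=\diag(\bm{z})\bm{W}$ through Vandermonde matrices, identify $\bm{W}\bm{V}^{-1}$ as the Lagrange evaluation matrix, and bound the Frobenius norm entrywise; this avoids the variational step and the $\sqrt{k}$ norm conversion, and your bookkeeping in fact yields the sharper constant $\tfrac{8}{\pi\delta^2}(1+6\log(2/\delta))\,k^3\gamma_{\min}^{-2(k-1)}$, a factor of $k$ (and $\pi^2/4$) better than the stated bound, so it certainly implies the theorem. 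The only points to make explicit in a full write-up are that $\bm{y}$ and $\bm{z}$ are independent (immediate from $\bm{U}_k^\T\bm{U}_{n\setminus k}=\bm{0}$, though your union bound does not actually need independence), and that the fixed-$p$ tail bound transfers to $\mathcal{E}_{\widehat{\bm{\Omega}}}(s;f)$ after multiplying both sides by $\max_{i}|f(\lambda_i)/p(\lambda_i)|^2$ and taking $p$ to be a (near-)minimizer of \eqref{eqn:min_ratio}, exactly as the paper does.
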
}
\rev{We highlight the dependence on $\frac{1}{\gamma_{\min}^{2(k-1)}}$, which can be extremely large of $\bm{\Lambda}_k$ has tightly clustered eigenvalues. This term appears because of the dependence of the inverse of $\widehat{\bm{\Omega}}_k$ in \eqref{eq:krylov_aware_structural_single}, which becomes exponentially ill-conditioned in $k$ and depends severely on the separation of the eigenvalues in $\bm{\Lambda}_k$. In particular, if $\bm{A}$ has repeated eigenvalues, $\widehat{\bm{\Omega}}$ may become rank deficient. To illustrate, if $\bm{A} = \bm{I}$ then $\dim(\mathcal{K}_{s+k}(\bm{A}, \bm{\omega})) = 1$, independently of $s+k$. In particular, we have $\rank(\widehat{\bm{\Omega}}) = 1$ and $\gamma_{\min} = 0$. Hence, the single-vector Lanczos method may fail to produce non-trivial low-rank approximations in degenerate cases when $\bm{A}$ has repeated eigenvalues.}

\rev{However, as argued in \cite{MeyerMuscoMusco:2024}, provided $\gamma_{\min} \neq 0$, the dependence on $\gamma_{\min}$ is mild since $\mathcal{E}(s;f)$ usually decays rapidly in $s$. To illustrate, consider the case when $f(x) = \exp(x)$. In the proof of \Cref{theorem:fast_convergence} we showed that
\begin{equation*}
    \mathcal{E}(s;f) = O\left(2^{-2(s-2)\sqrt{\Gamma} + 3\gamma_{k,n}}\|\exp(\bm{\Lambda}_{n \setminus k})\|_\F^2\right).
\end{equation*}
Thus, to ensure $\widehat{C}_{\delta}\frac{k^4}{\gamma_{\min}^{2(k-1)}} \mathcal{E}(s;f) \leq \varepsilon \|\exp(\bm{\Lambda}_{n \setminus k})\|_\F^2$ it is sufficient set $$s = O\left(\frac{\log(1/\varepsilon) +\gamma_{k,n} + k \log(1/\gamma_{\min}) + \log(\widehat{C}_{\delta})}{\sqrt{\Gamma}}\right).$$
Hence, the dependence on $\frac{1}{\gamma_{\min}}$ only appears in a logarithmic factor. As we will see in \Cref{sec:experiments}, \Cref{alg:krylow_single} usually produce significantly more accurate low-rank approximations using a fixed computational cost. 
}

\section{Numerical experiments}
\label{sec:experiments}
\rev{In this section we compare the Krylov aware low-rank approximation (\Cref{alg:krylow}), \Cref{alg:rsvd} (assuming exact matvecs with $f(\bm{A})$) and \Cref{alg:rsvd_matfun} (inexact matvecs with $f(\bm{A})$).} All experiments have been performed in MATLAB (version \rev{2024b}) and scripts to reproduce the figures are available at \url{https://github.com/davpersson/Krylov_aware_LRA.git}.

\subsection{Test matrices}
We begin with outlining the test matrices and matrix functions used in our examples.

\subsubsection{Exponential integrator}\label{section:exponential_integrator}
The following example is taken from \cite{persson_kressner_23}. Consider the following parabolic differential equation
\begin{align*}
\begin{split}
    &u_t = \kappa \Delta u + \lambda u \text{ in } [0,1]^2 \times [0,2]\\
    &u(\cdot,0) = \theta \text{ in } [0,1]^2\\
    &u = 0 \text{ on } \Gamma_1\\
    & \frac{\partial u}{\partial \bm{n}} = 0 \text{ on } \Gamma_2
\end{split}
\end{align*}
for $\kappa, \lambda > 0$ and $\Gamma_2 = \{(x,1) \in \mathbb{R}^{2} : x \in [0,1] \}$ and $\Gamma_1 = \partial \mathcal{D} \setminus \Gamma_2$. By discretizing in space using finite differences on a $100 \times 100$ grid we obtain an ordinary differential equation of the form
\begin{align}
\begin{split}
    \dot{\bm{u}}(t) &= \bm{A} \bm{u}(t) \text{ for } t \geq 0,\\
    \bm{u}(0) &= \bm{\theta}, \label{eq:ode}
\end{split}
\end{align}
for symmetric matrix $\bm{A} \in \mathbb{R}^{9900 \times 9900}$. It is well known that the solution to \eqref{eq:ode} is given by $\bm{u}(t) = \exp(t\bm{A})\bm{\theta}$. Suppose that we want to compute the solution for $t \geq 1$. One can verify that 
\begin{equation*}
    \max\limits_{t \geq 1} \frac{\|\exp(t\bm{A}) - \llbracket\exp(t\bm{A})\rrbracket_{k}\|_{\F}}{\|\exp(t\bm{A})\|_{\F}} = \frac{\|\exp(\bm{A}) - \llbracket\exp(\bm{A})\rrbracket_{k}\|_{\F}}{\|\exp(\bm{A})\|_{\F}},
\end{equation*}
and it turns out that $\exp(\bm{A})$ admits a good rank $60$ approximation
\begin{equation*}
    \frac{\|\exp(\bm{A}) - \llbracket\exp(\bm{A})\rrbracket_{60}\|_{\F}}{\|\exp(\bm{A})\|_{\F}} \approx 4 \times 10^{-4}.
\end{equation*}
Hence, we can use \Cref{alg:krylow} to compute $\bm{Q}_s$ and $\bm{T}_q$ and use them to efficiently construct a rank $60$ approximation to $\exp(t\bm{A})$ for any $t$. 
\rev{Note that due to the well-known shift-invariance property of Krylov subspaces, as with most Krylov methods, an approximation can be obtained for many values of $t$ \emph{with little additional cost} \cite{Saad:1992,hochbruck_lubich,Hochbruck1998,Druskin1998}}.

In the experiments we set $\kappa = 0.01$ and $\lambda = 1$. 
\subsubsection{Estrada index}\label{section:estrada}
For an (undirected) graph with adjacency matrix $\bm{A}$ the Estrada index is defined as $\tr(\exp(\bm{A}))$. It is used to measure the degree of protein folding \cite{estrada}. One can estimate the Estrada index of a network by the Hutch++ algorithm or its variations \cite{chen_hallman_23,epperly2023xtrace,hpp,ahpp}, which requires computing a low-rank approximation of $\exp(\bm{A})$. Motivated by the numerical experiments in \cite{hpp} we let $\bm{A}$ be the adjacency matrix of Roget’s Thesaurus semantic graph \cite{roget}. 

\begin{figure}[ht]
\begin{subfigure}{.5\textwidth}
  \centering
  \includegraphics[width=.9\linewidth]{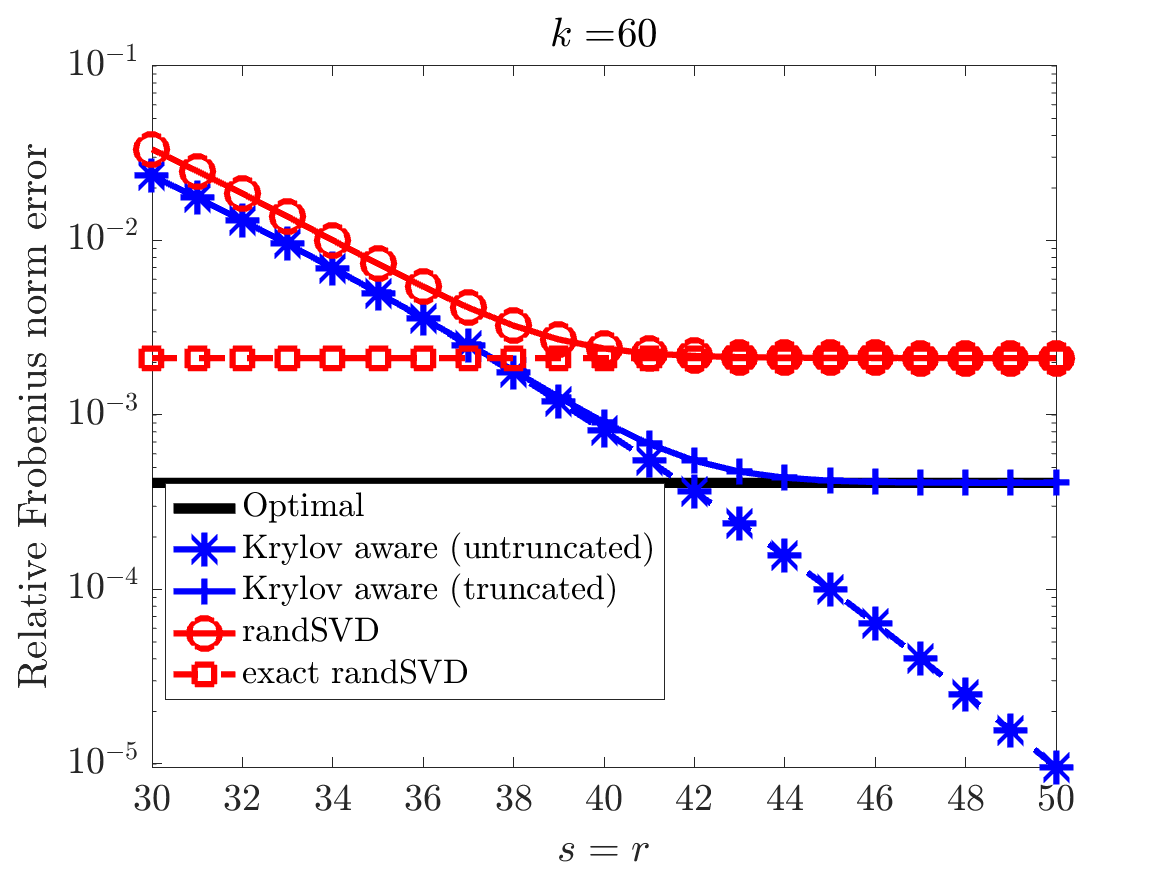}  
  \caption{Example from \Cref{section:exponential_integrator}}
\end{subfigure}
\begin{subfigure}{.5\textwidth}
  \centering
  \includegraphics[width=.9\linewidth]{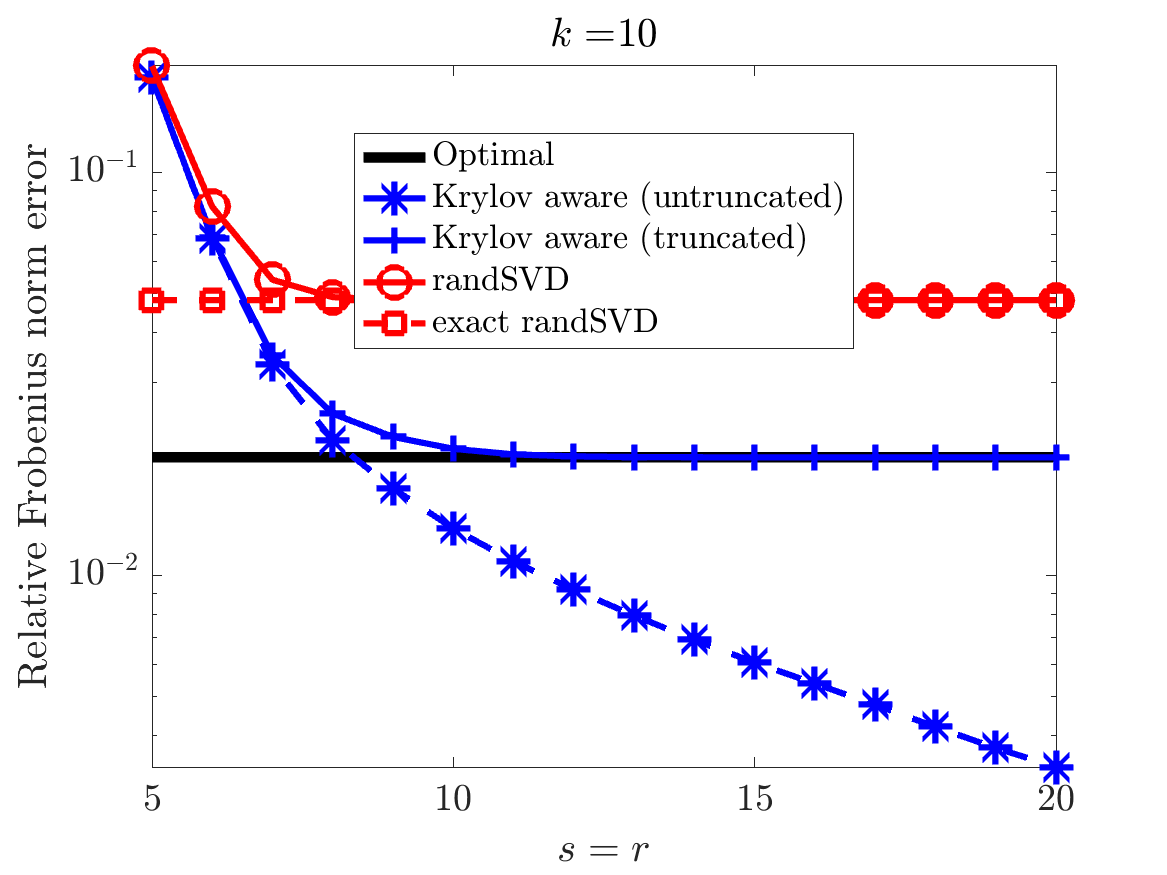} 
  \caption{Example from \Cref{section:estrada}}
\end{subfigure}
\begin{subfigure}{.5\textwidth}
  \centering
  \includegraphics[width=.9\linewidth]{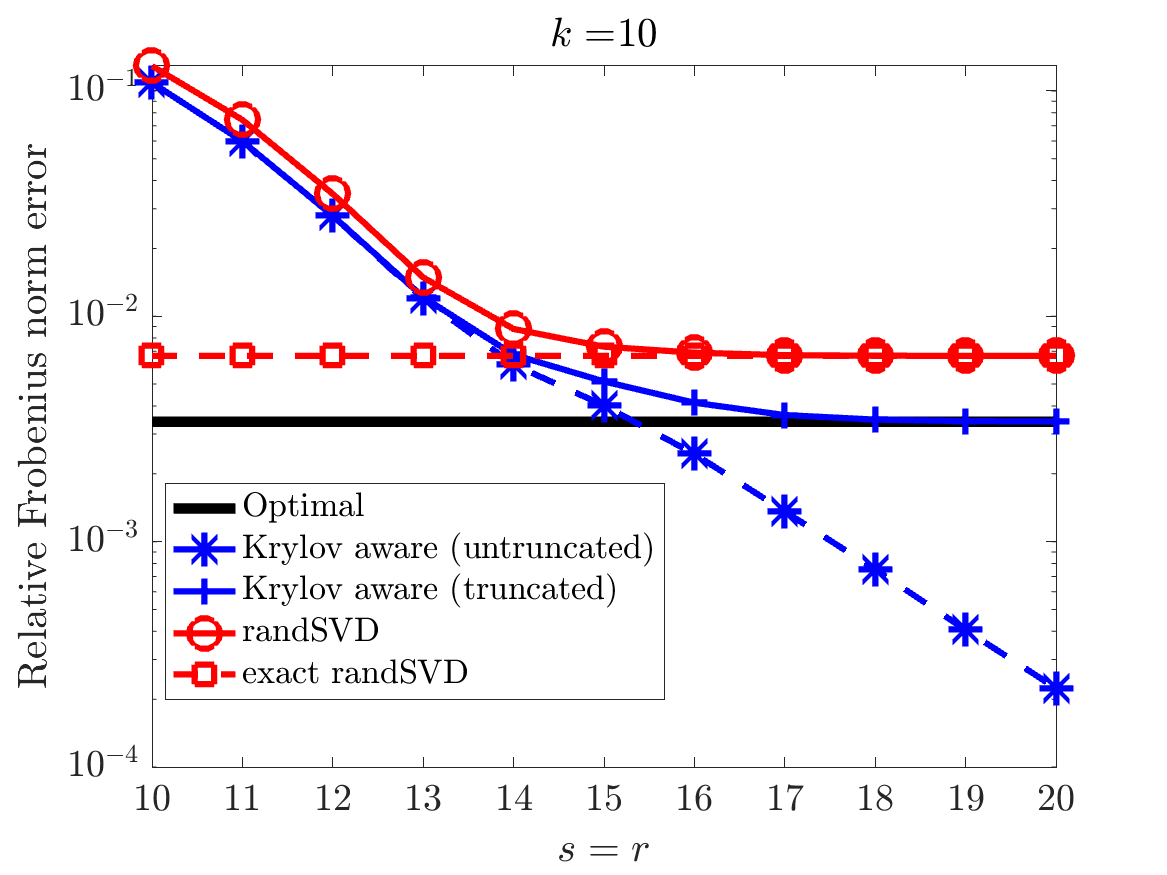}  
  \caption{Example from \Cref{section:quantum_spin}}
\end{subfigure}
\begin{subfigure}{.5\textwidth}
  \centering
  \includegraphics[width=.9\linewidth]{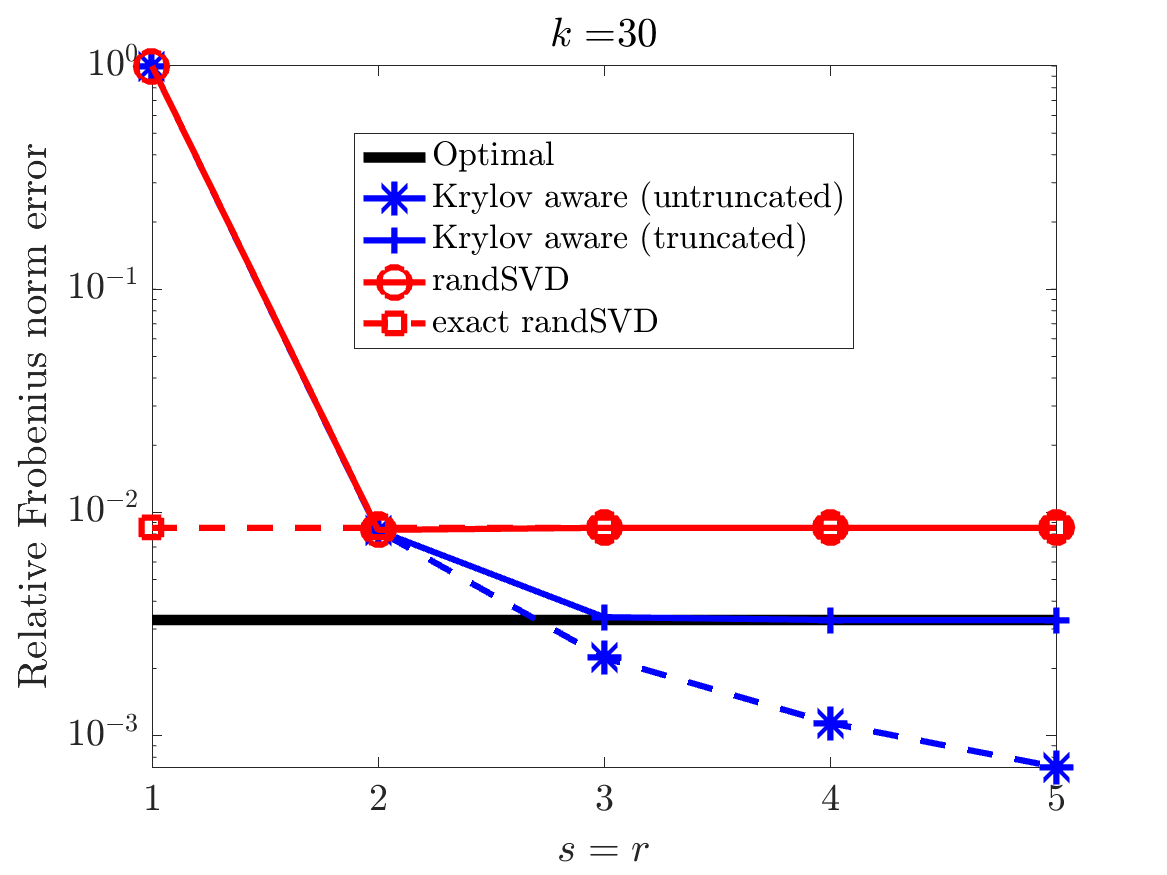}  
  \caption{Example from \Cref{section:synthetic_log}}
\end{subfigure}
\caption{Comparing relative error  \eqref{eq:relative_error} for the the approximations returned by \Cref{alg:krylow} without truncation (\rev{Krylov aware (untruncated)}), \Cref{alg:krylow} with truncation back to rank $k$ (\rev{Krylov aware (truncated)}), \Cref{alg:rsvd_matfun} \rev{(randSVD)}, and \Cref{alg:rsvd} \rev{(exact randSVD)}. The black line shows the optimal rank $k$ approximation relative Frobenius norm error. The rank parameter $k$ is visible as titles in the figures. In all experiments we set $\ell = k$. }
\label{fig:relative_errors}
\end{figure}

\subsubsection{Quantum spin system}\label{section:quantum_spin}
We use an example from \cite[Section 4.3]{epperly2023xtrace}, a similar example is found in \cite{chen_hallman_23}, in which we want to approximate $\exp(-\beta \bm{A})$ where 
\begin{equation*}
    \bm{A} = -\sum\limits_{i=1}^{N-1} \bm{Z}_i\bm{Z}_{i+1} -h\sum\limits_{i=1}^N \bm{X}_i \in \mathbb{R}^{n \times n},
\end{equation*}
where
\begin{equation*}
    \bm{X}_i = \bm{I}_{2^{i-1}} \otimes \bm{X} \otimes \bm{I}_{2^{N-i}}, \quad \bm{Z}_i = \bm{I}_{2^{i-1}} \otimes \bm{Z} \otimes \bm{I}_{2^{N-i}}
\end{equation*}
where $\bm{X}$ and $\bm{Z}$ are the Pauli operators
\begin{equation*}
    \bm{X} = \begin{bmatrix} 0 & 1 \\ 1 & 0 \end{bmatrix}, \quad \bm{Z} = \begin{bmatrix} 1 & 0 \\ 0 & -1 \end{bmatrix}.
\end{equation*}
Estimating the partition function $Z(\beta) = \tr(\exp(-\beta \bm{A}))$ is an important task in quantum mechanics \cite{pfeuty1970one}, which once again can benefit from computing a low-rank approximation of $\exp(-\beta \bm{A})$.

In the experiments we set $N = 14$ so that $n = 2^{14}$, $\beta = 0.3$, and $h = 10$.

\subsubsection{Synthetic example for the matrix logarithm}\label{section:synthetic_log}
We generate a symmetric matrix $\bm{A} \in \mathbb{R}^{5000 \times 5000}$ with eigenvalues $\lambda_i = \exp(\frac{1}{i^2})$ for $i = 1,\ldots,n$. We let $f(x) = \log(x)$ so that the eigenvalues of $f(\bm{A})$ are $f(\lambda_i) = \frac{1}{i^2}$ for $i = 1,\ldots,n$.
%\subsubsection{Synthetic example for the inverse}
%We generate a matrix $\bm{A} \in \mathbb{R}^{5000 \times 5000}$ with eigenvalues $\lambda_i = i^2$ for $i = 1,\ldots,n$. We let $f(x) = \frac{1}{x}$ so that the eigenvalues of $f(\bm{A})$ are $f(\lambda_i) = \frac{1}{i^2}$ for $i = 1,\ldots,n$.

\begin{figure}[ht]
\begin{subfigure}{.5\textwidth}
  \centering
  \includegraphics[width=.9\linewidth]{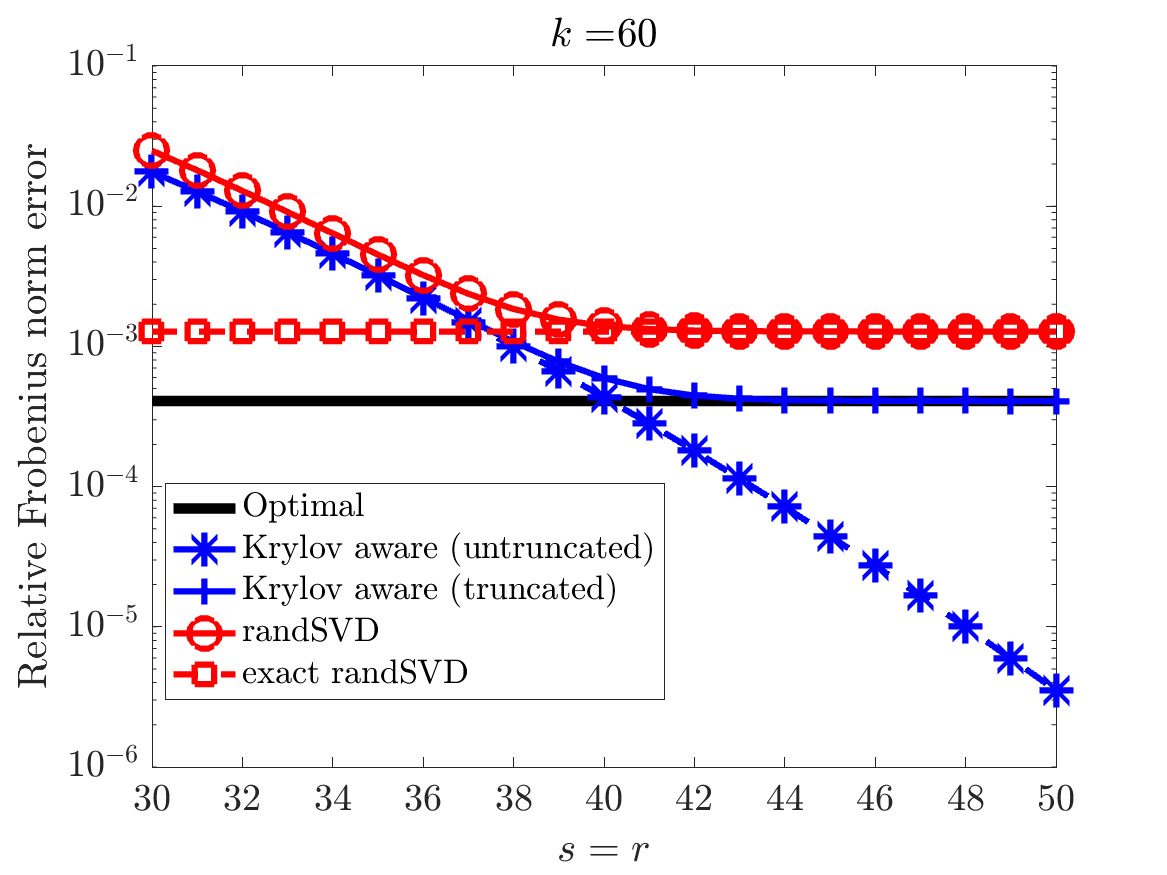}  
  \caption{Example from \Cref{section:exponential_integrator}}
\end{subfigure}
\begin{subfigure}{.5\textwidth}
  \centering
  \includegraphics[width=.9\linewidth]{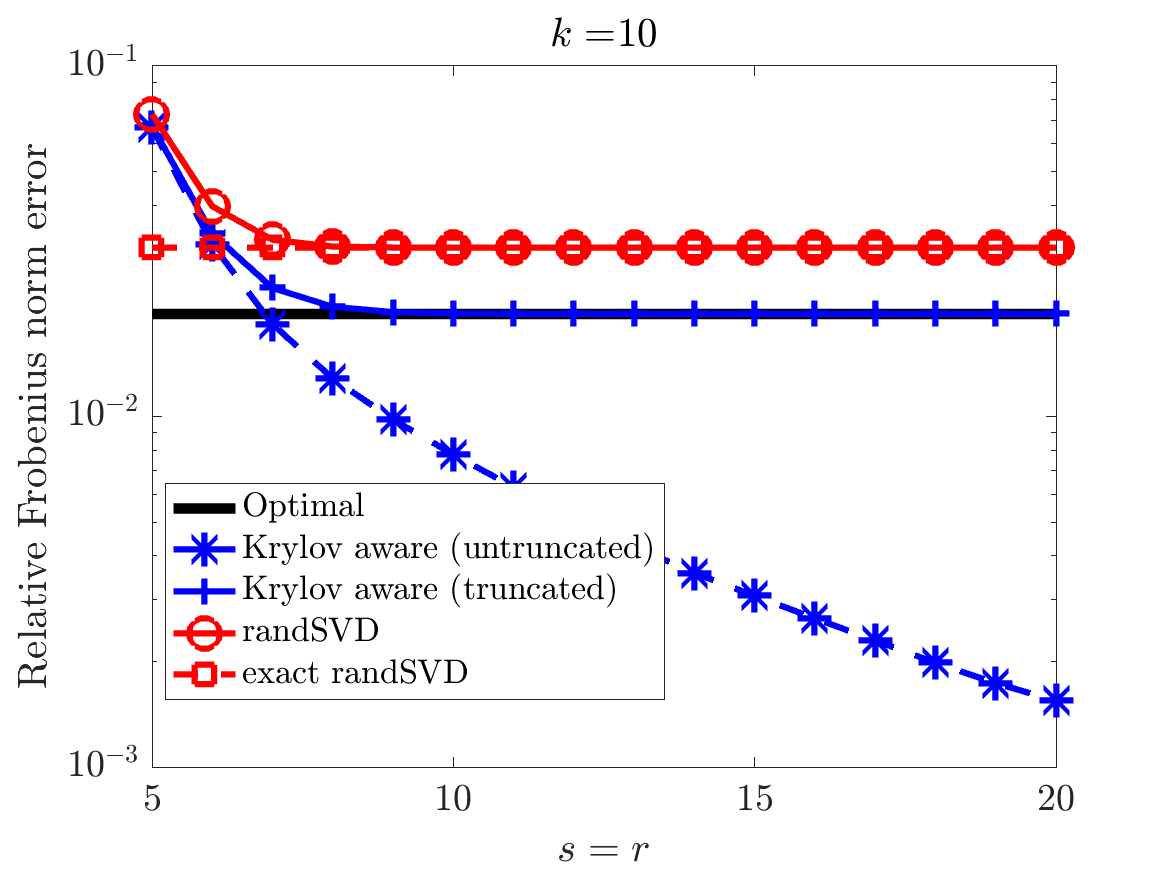} 
  \caption{Example from \Cref{section:estrada}}
\end{subfigure}
\begin{subfigure}{.5\textwidth}
  \centering
  \includegraphics[width=.9\linewidth]{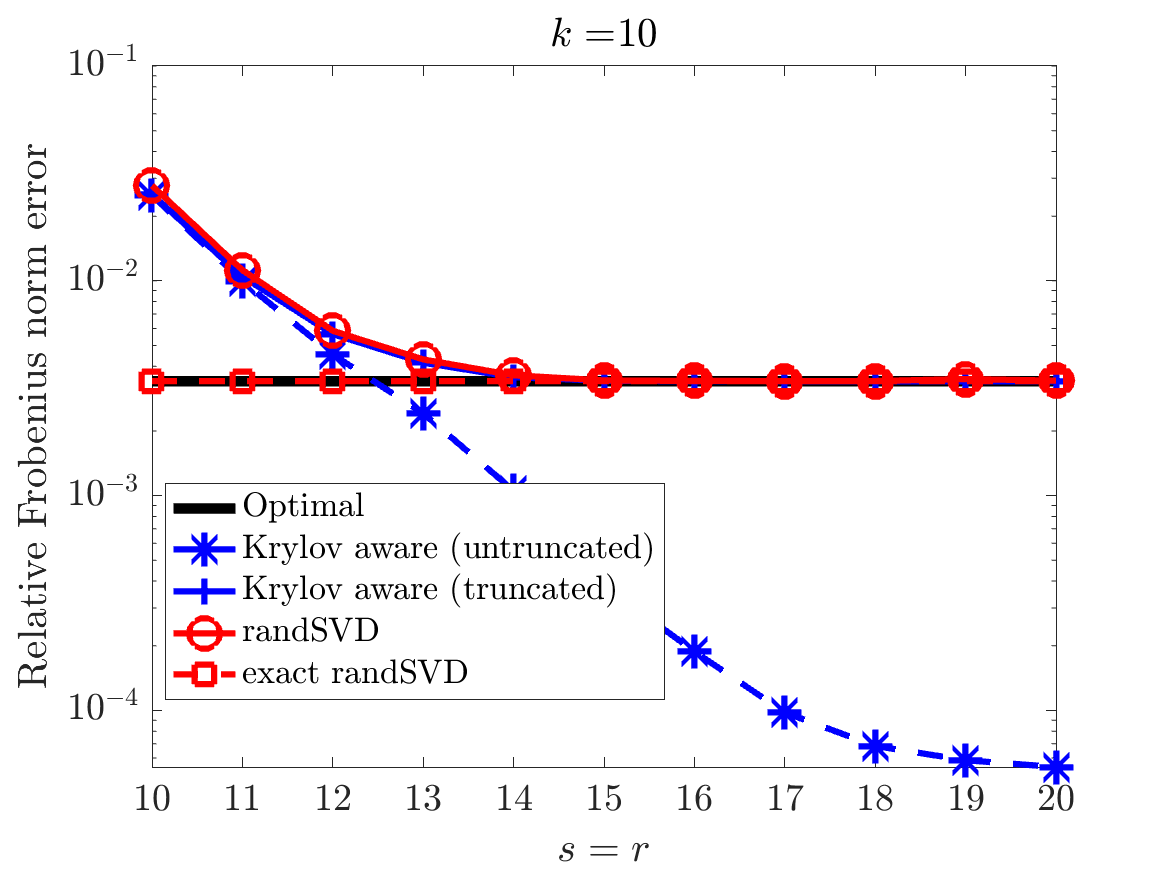}  
  \caption{Example from \Cref{section:quantum_spin}}
\end{subfigure}
\begin{subfigure}{.5\textwidth}
  \centering
  \includegraphics[width=.9\linewidth]{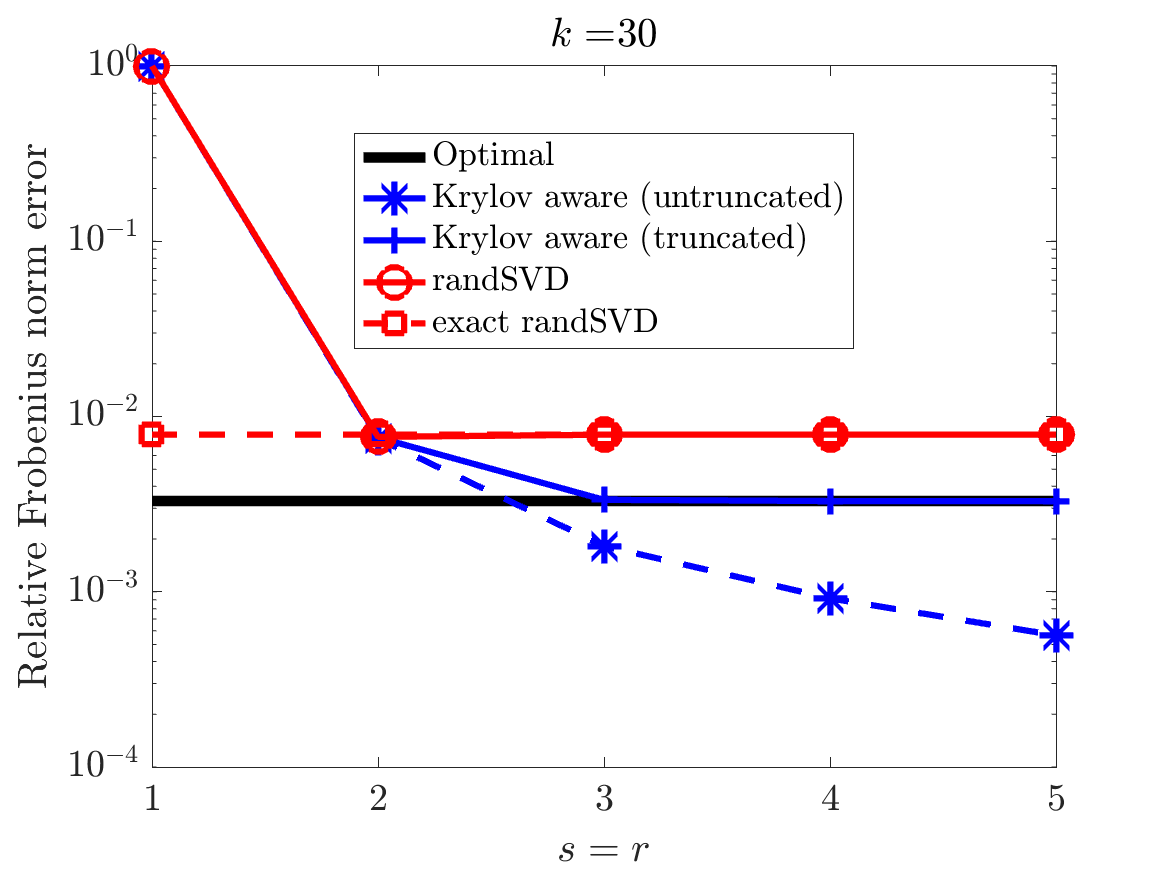}  
  \caption{Example from \Cref{section:synthetic_log}}
\end{subfigure}
\caption{Comparing relative error  \eqref{eq:relative_error} for the the approximations returned by \Cref{alg:krylow} without truncation (\rev{Krylov aware (untruncated)}), \Cref{alg:krylow} with truncation back to rank $k$ (\rev{Krylov aware (truncated)}), \Cref{alg:rsvd_matfun} \rev{(randSVD)}, and \Cref{alg:rsvd} \rev{(exact randSVD)}. The black line shows the optimal rank $k$ approximation relative Frobenius norm error. The rank parameter $k$ is visible as titles in the figures. In all experiments we set $\ell = k+5$. }
\label{fig:relative_errors2}
\end{figure}

\subsection{Comparing relative errors \rev{of block methods}}
In this section we compare the error \rev{of the approximations} returned by \Cref{alg:krylow},  \Cref{alg:rsvd}, and \Cref{alg:rsvd_matfun}. If $\bm{C}$ is a low-rank approximation returned by one of the algorithms then we compare the relative error
\begin{equation}\label{eq:relative_error}
    \frac{\|f(\bm{A}) - \bm{C}\|_{\F}}{\|f(\bm{A})\|_{\F}}.
\end{equation}
In all experiments we set the parameters in \Cref{alg:krylow} and \Cref{alg:rsvd_matfun} to be $\ell = k$ or $\ell = k + 5$ and $s = r$ so that the total number of matrix vector products with $\bm{A}$ is $2\ell s$. 
When we run \rev{\Cref{alg:rsvd}} we compute matvecs with $f(\bm{A})$ exactly, which cannot be done in practice. Hence, the results from this algorithm are only used as a reference for \Cref{alg:krylow} and \Cref{alg:rsvd_matfun}.
The results are presented in \Cref{fig:relative_errors} for $\ell = k$ and \Cref{fig:relative_errors2} for $\ell = k + 5$. All results confirm that \Cref{alg:krylow} returns a more accurate approximation than \Cref{alg:rsvd_matfun}, and can even be more accurate than \Cref{alg:rsvd}. \rev{These results also confirm the findings of Chen \& Hallman \cite{chen_hallman_23}, where it was shown that their Krylov aware version of Hutch++ yields more accurate approximations to $\tr(f(\bm{A}))$ compared to a naive implementation of Hutch++. As discussed in \Cref{section:introduction}, the reason for the effectiveness of their method is due to a more accurate low-rank approximation.} %Note that for the example given in \Cref{section:quantum_spin} the error for the untruncated version of the approximation returned by \Cref{alg:krylow} stagnates. This is because the error from the approximation of the quadratic form dominates the error. In this case, $r$ should be chosen larger than $s$. However, we leave it as an open research question to determine the optimal balancing of $s$ and $r$.  

\subsection{\rev{Comparing relative errors of single vector methods}}
\rev{In this section we compare the relative error \eqref{eq:relative_error} of the approximations returned by \Cref{alg:krylow} and \Cref{alg:krylow_single}; that is, we compare the block and single-vector version of the Krylov-aware low-rank approximation.}

\rev{In \Cref{alg:krylow} we set the block-size to be $\ell = k$ and $s = r$. Consequently, \Cref{alg:krylow} performs at most $2sk$ matrix-vector products with $\bm{A}$ and returns the approximations $\ALG(s,s;f)$ and $\ALG_k(s,s;f)$. For comparison, we consider the output of \Cref{alg:krylow_single}, namely $\widehat{\textsf{\textup{ALG}}}((s-1)k,sk;f)$ and $\widehat{\textsf{\textup{ALG}}}_k((s-1)k,sk;f)$, which also require at most $2sk$ matrix-vector products with $\bm{A}$. The results are presented in \Cref{fig:relative_errors_svk}. These result, shown in \Cref{fig:relative_errors_svk},  indicate that with the same matrix-vector products with $\bm{A}$, the single-vector version \Cref{alg:krylow_single} yields a more efficient approximation to $f(\bm{A})$ than \Cref{alg:krylow}.}

\begin{figure}[ht]
\begin{subfigure}{.5\textwidth}
  \centering
  \includegraphics[width=.9\linewidth]{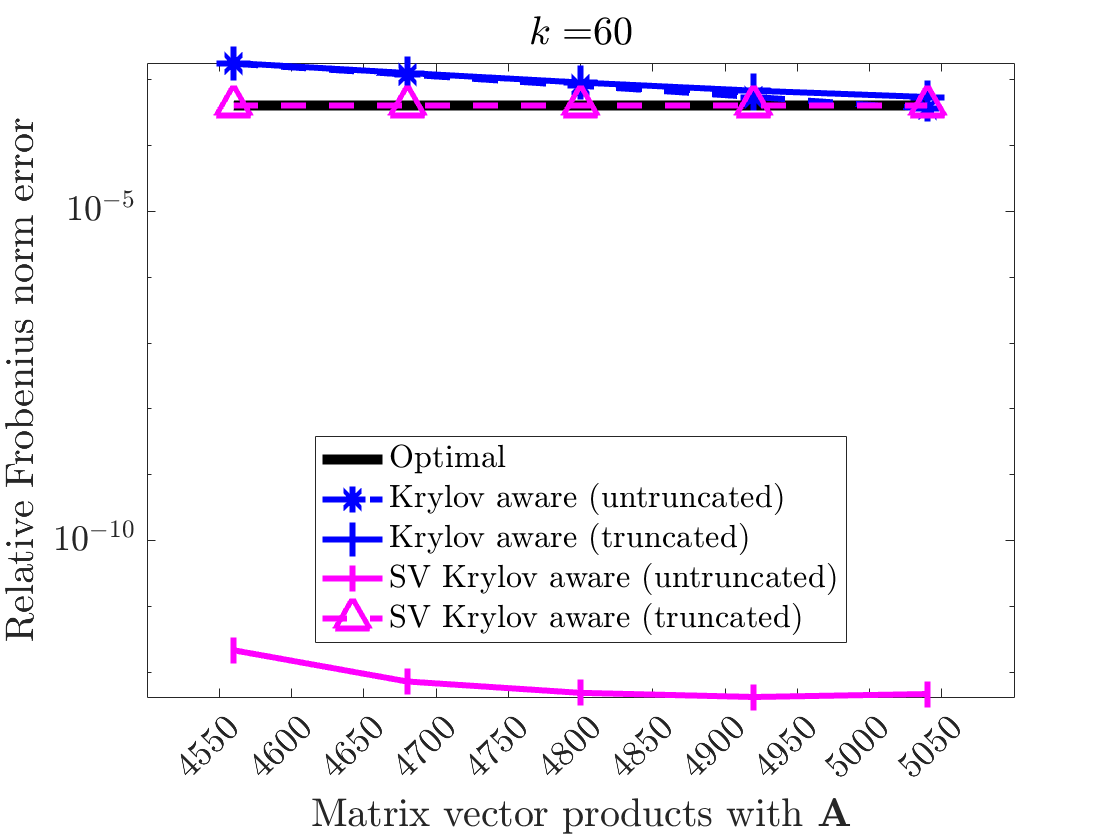}  
  \caption{Example from \Cref{section:exponential_integrator}}
\end{subfigure}
\begin{subfigure}{.5\textwidth}
  \centering
  \includegraphics[width=.9\linewidth]{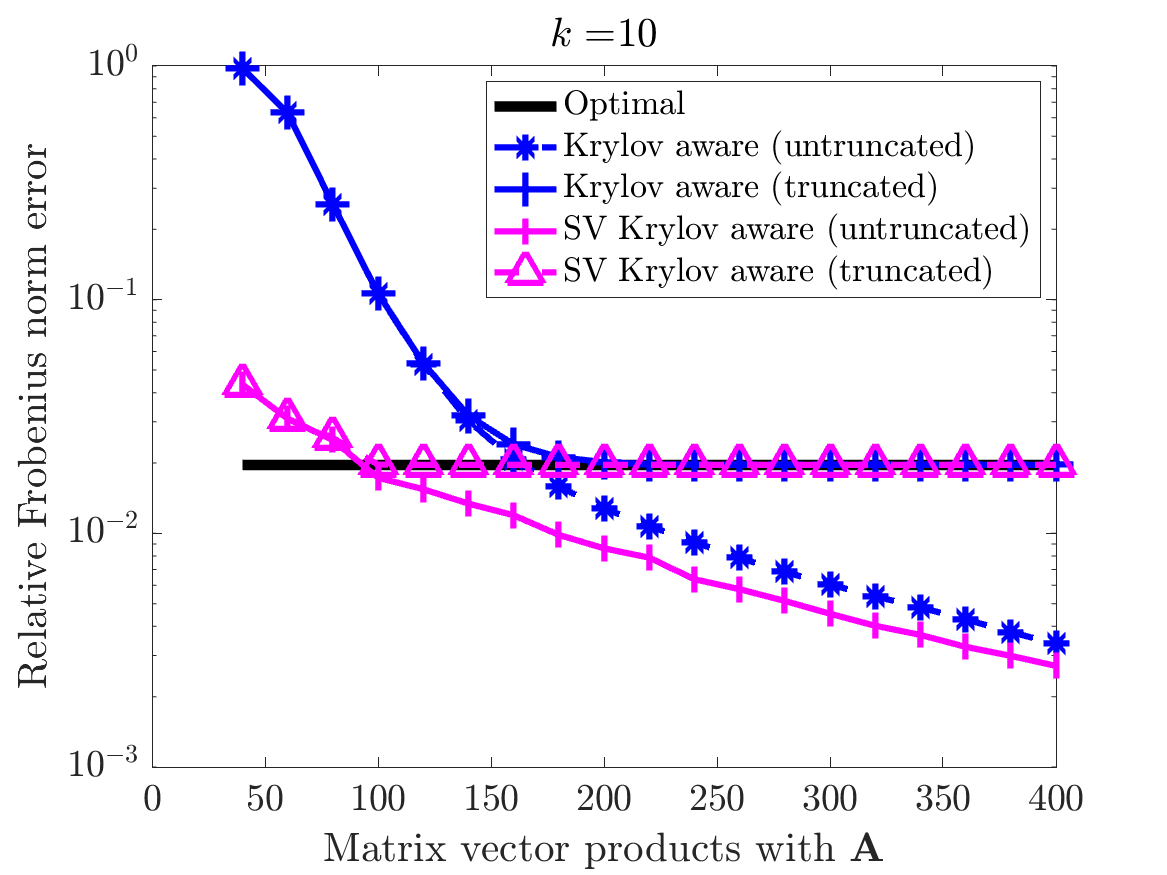} 
  \caption{Example from \Cref{section:estrada}}
\end{subfigure}
\begin{subfigure}{.5\textwidth}
  \centering
  \includegraphics[width=.9\linewidth]{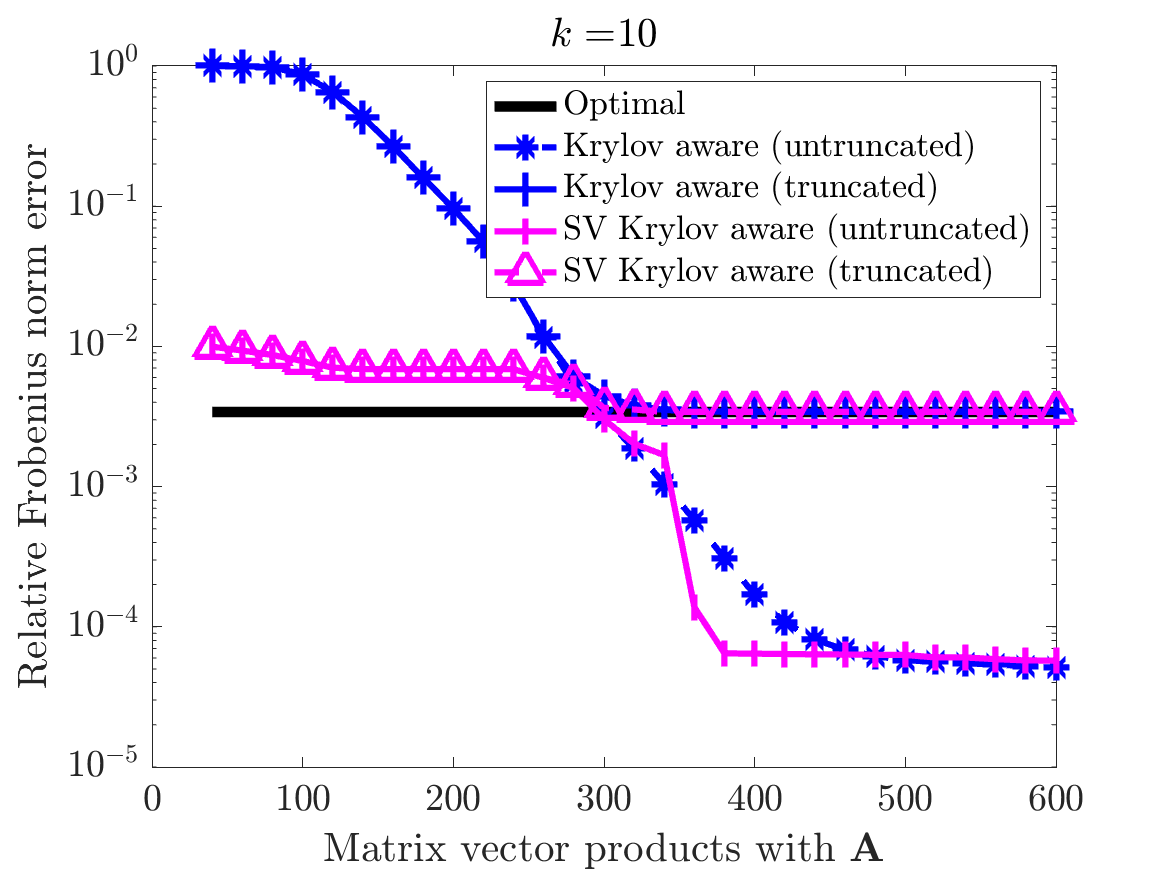}  
  \caption{Example from \Cref{section:quantum_spin}}
\end{subfigure}
\begin{subfigure}{.5\textwidth}
  \centering
  \includegraphics[width=.9\linewidth]{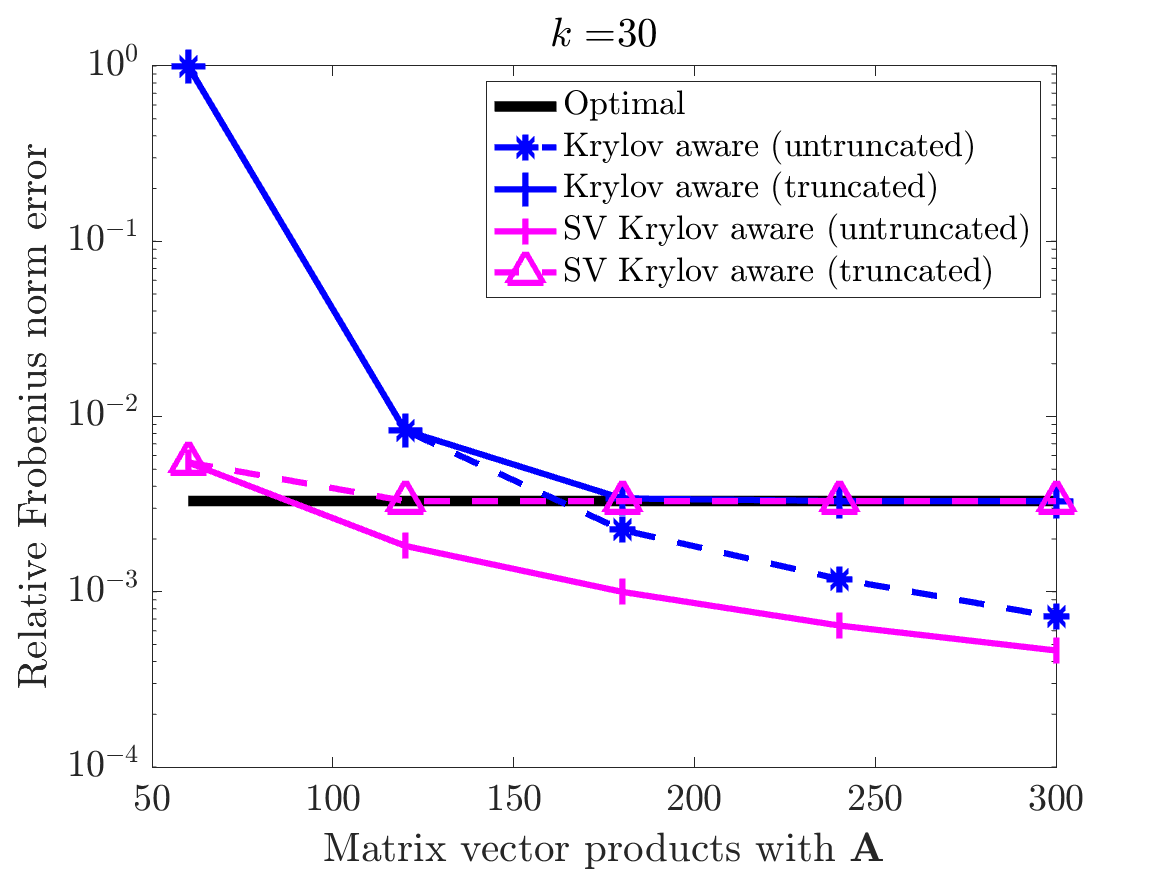}  
  \caption{Example from \Cref{section:synthetic_log}}
\end{subfigure}
\caption{\rev{Comparing relative error  \eqref{eq:relative_error} for the the approximations returned by \Cref{alg:krylow} without truncation (Krylov aware (untruncated)), \Cref{alg:krylow} with truncation back to rank $k$ (Krylov aware (truncated)), \Cref{alg:krylow_single} without truncation (SV Krylov aware (untruncated)), \Cref{alg:krylow_single} with truncation back to rank $k$ (SV Krylov aware (truncated)). The black line shows the optimal rank $k$ approximation relative Frobenius norm error. The rank parameter $k$ is visible as titles in the figures.}}
\label{fig:relative_errors_svk}
\end{figure}

\section{Conclusion}\label{section:conclusion}
In this work we have provided an analysis of the Krylov-aware low-rank approximation presented in \cite{chen_hallman_23}. In particular, we have shown that the Krylov-aware low-rank approximation to $f(\bm{A})$ is never worse than the approximation returned by the randomized SVD on $f(\bm{A})$, up to a polynomial approximation error factor\rev{, and we have presented an error analysis of the method}. Numerical experiments further demonstrated that the Krylov-aware approach is often substantially more accurate than randomized SVD for approximating $f(\bm{A})$.

\rev{However, as highlighted in \cite[Chapter 10]{thesis}, when using the same number of matvecs with $\bm{A}$ the approximation \begin{equation}\label{eq:direct}
    f(\bm{A}) \approx \bm{Q}_{s} \llbracket f(\bm{T}_{s})\rrbracket_k \bm{Q}_{s}^\T,  \quad  \bm{T}_s = \bm{Q}_s^\T \bm{A} \bm{Q}_s,
\end{equation} 
where $\bm{Q}_s$ is an orthonormal basis for a Krylov subspace, can be even more accurate than the Krylov-aware low-rank approximation, at the same computational cost. This scheme corresponds to setting $r = 0$ in \Cref{alg:krylow}. It is also closely related to the method proposed in \cite{persson_kressner_23,funnystrom2} for operator monotone functions. }

\rev{While the analysis developed in this works does not yet provide theoretical justification for \eqref{eq:direct}, extending the theory to cover \eqref{eq:direct} remains an important direction for future work. Nonetheless, our hope is that the present analysis serves as a stepping stone toward such a result. }

\begin{paragraph}{Acknowledgements}
This work was done while David Persson was a PhD-student at EPFL and supported by the SNSF research project \textit{Fast algorithms from low-rank updates}, grant number: 200020\_178806. This work is also contained in his PhD-thesis \cite{thesis}. Tyler Chen and Christopher Musco were partially supported by NSF Award \#2045590. The authors thank Daniel Kressner for helpful comments on this work.
\end{paragraph}

%In this work we have provided an analysis of the Krylov-aware low-rank approximation presented in \cite{chen_hallman_23}. We have provided error bounds that align with our intuition of Krylov subspace methods and we therefore shed light on the good performance of the Krylov-aware method. 

\bibliographystyle{siam}
\bibliography{bibliography}

\begin{appendices}
\section{Proofs of \Cref{theorem:rsvd_like_bound} and \Cref{theorem:fast_convergence}}\label{section:appendix}
\begin{proof}[Proof of \Cref{theorem:rsvd_like_bound}]
    First note that by a standard Chebyshev interpolation bound \cite[Lecture 20]{stewart}
    \begin{equation*}
        \inf\limits_{p \in \mathbb{P}_{2r+1}}\|\exp(x)-p(x)\|_{L^{\infty}([\lambda_{\min},\lambda_{\max}])}  \leq \frac{\gamma_{1,n}^{2r+r}}{2^{4r+3}(2r+2)!}\|\exp(\bm{A})\|_2.
    \end{equation*}
    % \begin{align*}
    %     &\inf\limits_{p \in \mathbb{P}_{2r+1}}\|\exp(x)-p(x)\|_{L^{\infty}([\lambda_{\min},\lambda_{\max}])} = \exp(\lambda_{\min}) \inf\limits_{p \in \mathbb{P}_{2r+1}}\|\exp(x)-p(x)\|_{L^{\infty}([0,\lambda_{\max}-\lambda_{\min}])} \leq \\
    %     &\exp(\lambda_{\min}) \frac{(\lambda_{\max} - \lambda_{\min})^{2r+2}}{(2r+2)!} \exp(\lambda_{\max} - \lambda_{\min}) = \frac{(\lambda_{\max} - \lambda_{\min})^{2r+2}}{(2r+2)!} \exp(\lambda_{\max})  = \\
    %     &\exp(\lambda_{\max} - \lambda_{\min}) = \frac{(\lambda_{\max} - \lambda_{\min})^{2r+2}}{(2r+2)!} \|\exp(\bm{A})\|_2.
    % \end{align*}
    Hence,
    \begin{equation*}
        4\sqrt{\ell s}  \inf\limits_{p \in \mathbb{P}_{2r+1}}\|f(x)-p(x)\|_{L^{\infty}([\lambda_{\min},\lambda_{\max}])} \leq 4\sqrt{\ell s} \frac{\gamma_{1,n}^{2r+2}}{(2r+2)!} \|\exp(\bm{A})\|_2.
    \end{equation*}
    We proceed with bounding $\mathcal{E}(s;\exp(x))$. Let $p(x) = \sum\limits_{i=1}^{s-1}\frac{(x-\lambda_{\min})^i}{i!}$. Then for $x \in [\lambda_{\min},\lambda_{\max}]$ we have $0 \leq p(x) \leq \exp(x-\lambda_{\min})$. Consequently, $\|p(\bm{\Lambda}_{n \setminus k })\|_{\F} \leq \|\exp(\bm{\Lambda}_{n \setminus k })\|_{\F} \exp(-\lambda_{\min})$. Furthermore, for $x \in [\lambda_{\min},\lambda_{\max}]$ we have
    % \begin{align*}
    %    0 &\leq  \exp(x-\lambda_{\min}) - p(x) 
    %    \\&= \sum\limits_{i=s}^{\infty} \frac{(x-\lambda_{\min})^i}{i!} = \frac{(x-\lambda_{\min})^s}{s!} \sum\limits_{i=0}^{\infty} \frac{(x-\lambda_{\min})^i}{(i+s)!} s! 
    %    \\&\leq \frac{(x-\lambda_{\min})^s}{s!} \sum\limits_{i=0}^{\infty} \frac{(x-\lambda_{\min})^i}{i!} = \frac{(x-\lambda_{\min})^s}{s!} \exp(x-\lambda_{\min}).
    % \end{align*}
    \begin{align*}
       0 \leq  \exp(x-\lambda_{\min}) - p(x) 
       \leq \frac{(x-\lambda_{\min})^s}{s!} \exp(x-\lambda_{\min}).
    \end{align*}
    Note that by the assumption on $s$ and a Stirling approximation $s! \geq \sqrt{2\pi s} \left(\frac{s}{e}\right)^s$ \cite{stirling} we have $\frac{\gamma_{1,n}^s}{s!} < 1$.
    Hence,
    \begin{align}\label{eq:bound}
        \max\limits_{i=1,\ldots,k} \left|\frac{\exp(\lambda_i)}{p(\lambda_i)}\right| &= \exp(\lambda_{\min})\max\limits_{i=1,\ldots,k} \left|\frac{\exp(\lambda_i - \lambda_{\min})}{p(\lambda_i)}\right| 
        \nonumber\\&
        = \exp(\lambda_{\min}) \left(1- \frac{(\lambda_{\max}-\lambda_{\min})^s}{s!}\right)^{-1}. 
    \end{align}
    Therefore, $\mathcal{E}(s;\exp(x))$ is bounded above by 
    \begin{equation}\label{eq:bound2}
        \mathcal{E}(s;\exp(x)) \leq \frac{1}{(1- \frac{(\lambda_{\max}-\lambda_{\min})^s}{s!})^2} \|\exp(\bm{\Lambda}_{n \setminus k})\|_{\F}^2 = \frac{1}{(1- \frac{\gamma_{1,n}^s}{s!})^2} \|\exp(\bm{\Lambda}_{n \setminus k})\|_{\F}^2.
    \end{equation}
    Plugging the inequalities  \eqref{eq:bound} and \eqref{eq:bound2} into \Cref{theorem:krylov_aware} yields the desired inequality. 
\end{proof}

\begin{proof}[Proof of \Cref{theorem:fast_convergence}]
    Bounding $4\sqrt{\ell s}\inf\limits_{p \in \mathbb{P}_{2r+1}}\|\exp(x)-p(x)\|_{L^{\infty}([\lambda_{\min},\lambda_{\max}])}$ is done identically to in \Cref{theorem:rsvd_like_bound}. We proceed with bounding $\mathcal{E}(s;\exp(x))$. Define $\rev{\eta}(x) = \frac{x-\lambda_{k+1}}{\lambda_{k+1} - \lambda_n}$. Define the polynomial $p(x) = (1+x-\lambda_{n})T_{s-2}\left(1 + 2 \rev{\eta}(x)\right)$, where $T_{s-2}$ is the Chebyshev polynomial of degree $s-2$.\footnote{\rev{Polynomials similar to $p(x)$ are commonly used in the analysis of eigenvalue approximation using Krylov subspace methods; see \cite{shmuel, paige1971computation, saad1980rates, underwood1975iterative}.}}
    Hence, recalling the definition \cref{eqn:min_ratio} of $\mathcal{E}(s;\exp(x))$, since $0\leq 1+x \leq \exp(x)$ for $x\geq 0$ and $|T_{s-2}(x)| \leq 1$ for $x \in [-1,1]$ we have,
    \[
    \|p(\bm{\Lambda}_{n \setminus k})\|_{\F}
    \leq \|\exp(\bm{\Lambda}_{n \setminus k} - \lambda_{n}\bm{I})\|_{\F}\]
    Hence, using that $\frac{\exp(x)}{1+x} \leq \exp(x)$ for $x \geq 0$ we get
    \begin{align}
        \mathcal{E}(s;\exp(x)) 
        &\leq \|p(\bm{\Lambda}_{n \setminus k})\|_{\F}^2 \max\limits_{i=1,\ldots,k} \left|\frac{\exp(\lambda_i)}{p(\lambda_i)}\right|^2 
        \nonumber\\&\leq 
        \|\exp(\bm{\Lambda}_{n \setminus k}-\lambda_{n}\bm{I})\|_{\F}^2\max\limits_{i=1,\ldots,k} e^{2\lambda_{n}} \left|\frac{\exp(\lambda_i-\lambda_{n})}{p(\lambda_i)}\right|^2 
        \nonumber\\&=\|\exp(\bm{\Lambda}_{n \setminus k})\|_{\F}^2\max\limits_{i=1,\ldots,k} \left|\frac{\exp(\lambda_i-\lambda_{n})}{p(\lambda_i)}\right|^2
        \nonumber\\&\leq\|\exp(\bm{\Lambda}_{n \setminus k})\|_{\F}^2\max\limits_{i=1,\ldots,k} \left|\frac{\exp(\lambda_i-\lambda_{n})}{T_{s-2}\left(1 + 2 \rev{\eta}(\lambda_i)\right)}\right|^2.
        \label{eqn:Esexp_bd}
    \end{align}
    First note that for $|x| \geq 1$ we have
    \begin{equation*}
        T_{s-2}(x) = \frac{1}{2}\left(\left(x + \sqrt{x^2-1}\right)^{s-2} + \left(x - \sqrt{x^2-1}\right)^{s-2} \right).
    \end{equation*}
    Hence, for $\rev{\eta} \geq 0$ we have $|T_{s-2}(1+2\rev{\eta})| \geq \frac{1}{2}(1+2\rev{\eta} + 2\sqrt{\rev{\eta} + \rev{\eta}^2})^{s-2} =: \frac{1}{2}h(\rev{\eta})^{s-2}$. Therefore,
    \begin{align*}
        \max\limits_{i=1,\ldots,k} \left|\frac{e^{\lambda_i-\lambda_{n}}}{T_{s-2}\left(1 + 2 \rev{\eta}(\lambda_i)\right)}\right|^2 &\leq 4 \max\limits_{i=1,\ldots,k} \left|\frac{\exp(\lambda_i-\lambda_{n})}{h(\rev{\eta}(\lambda_i))^{s-2}}\right|^2\\
        &= 4 \max\limits_{i=1,\ldots,k} \left(\exp(\lambda_i - \lambda_n - (s-2)\log(h(\rev{\eta}(\lambda_i))))\right)^2.
    \end{align*}
    Note that the function 
    \begin{equation*}
        g(x) := x - \lambda_n - (s-2)\log(h(\rev{\eta}(x)))
    \end{equation*}
    is convex in $[\lambda_k,\lambda_1]$ because $\log(h(x))$ is concave for $x \geq 0$ and $\rev{\eta}(x)$ is linear positive function for any $x \geq \lambda_{k}$. Hence, the maximum of $g$ on the interval $[\lambda_k,\lambda_1]$ is attained at either $x = \lambda_k$ or $x = \lambda_1$. The maximum of $g$ is attained at $x = \lambda_k$ whenever $g(\lambda_1) \leq g(\lambda_k)$ which happens whenever
    \begin{equation}\label{eq:scondition}
        \frac{\lambda_1 - \lambda_k}{\log\left(\frac{h(\rev{\eta}(\lambda_1))}{h(\rev{\eta}(\lambda_k))}\right)} + 2 \leq s.
    \end{equation}
    Note that we have whenever $x \geq y \geq 0$ we have $\frac{h(x)}{h(y)} \geq \frac{1 + x}{1+y}$, which can be seen by noting that $\frac{h(x)}{1+x}$ is an increasing function. Hence, \eqref{eq:scondition} holds whenever
    \begin{equation*}
        \frac{\lambda_1 - \lambda_k}{\log\left(\frac{\lambda_1 - \lambda_{n}}{\lambda_k - \lambda_{n}}\right)} + 2  \leq s,
    \end{equation*}
    which is our assumption on $s$. Hence, 
    \begin{equation}\label{eq:upperbound}
         \max\limits_{i=1,\ldots,k} \left|\frac{\exp(\lambda_i-\lambda_{n})}{T_{s-2}\left(1 + 2 \rev{\eta}(\lambda_i)\right)}\right|^2 \leq 4 \frac{\exp(2(\lambda_k - \lambda_n))}{h(\rev{\eta}(\lambda_k))^{2(s-2)}}.
    \end{equation}
    Now we use the argument from \cite[p.21]{MM15}, which shows that $h(\rev{\eta})^{(s-2)} \geq 2^{\sqrt{\min\{1,2\rev{\eta}\}}(s-2) - 1}$. Plugging this inequality into \cref{eq:upperbound} and then \cref{eqn:Esexp_bd} and  using the definition for $\gamma_{i,j}$ yields
    \begin{equation*}
        \mathcal{E}(s;\exp(x)) \leq 16 \exp(2\gamma_{k,n}) 2^{-2(s-2) \sqrt{\min\left\{1,2 \frac{\gamma_{k,k+1}}{\gamma_{k+1,n}}\right\}}} \|\exp(\bm{\Lambda}_{n\setminus k})\|_\F^2.
    \end{equation*}
    \rev{Noting $\exp(2\gamma_{k,n}) \leq 2^{3\gamma_{k,n}}$ and inserting the definition of $\Gamma$ yields the desired result.}

    \rev{The second statement follows directly. Setting $\ell = 2k+1$ and $s = O\left(\frac{\log(1/\varepsilon) + \gamma_{k,n}}{\sqrt{\Gamma}}\right)$ yields $2^{-2(s-2)\sqrt{\Gamma} + 3\gamma_{k,n}}\frac{80k}{\ell - k - 1} \leq \varepsilon$. Selecting $r$ as prescribed yields makes the polynomial approximation term $\leq \tau \|\exp(\bm{A})\|_2$ by a Stirling approximation on $(2r+2)!$ \cite{stirling}, yielding the desired result.}
\end{proof}

\section{\rev{Proof of \Cref{theorem:singlevector}}}\label{section:appendixB}
\rev{To prove \Cref{lemma:intermediatebound}, we begin with the following simple lemma. }

\rev{\begin{lemma}\label{lemma:intermediatebound}
    Let $\bm{\omega}_1 \in \mathbb{R}^k$ and $\bm{\omega}_2$ be standard Gaussian random vectors, and let $\bm{\omega}_{1,i}$ denote the $i^{\text{th}}$ entry of $\bm{\omega}_1$. For any fixed matrix $\bm{B}$, the following inequality holds with probability at least $1-\delta$
    \begin{equation*}
        \frac{\|\bm{B} \bm{\omega}_2 \|_2^2}{\min\limits_{i=1,\ldots,k}\bm{\omega}_{1,i}^2} \leq  \widehat{C}_{\delta}k^2\|\bm{B}\|_\F^2, 
    \end{equation*}
    where $\widehat{C}_{\delta} = \frac{2 \pi (1+6\log(2/\delta))}{\delta^2}$ is as in \Cref{theorem:singlevector}.
\end{lemma}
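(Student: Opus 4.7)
The plan is to bound the numerator and denominator separately with probability $1-\delta/2$ each, and combine by a union bound.

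\textbf{Bounding the numerator.} First, I would observe that if $\bm{B} = \sum_i \sigma_i u_i v_i^\T$ is the SVD of $\bm{B}$, then
\[
\|\bm{B} \bm{\omega}_2\|_2^2 = \sum_i \sigma_i^2 \langle v_i, \bm{\omega}_2\rangle^2 \stackrel{\text{d}}{=} \sum_i \sigma_i^2 Z_i^2,
\]
where $Z_1,\ldots,Z_n$ are i.i.d.\ standard Gaussians (using that $\{v_i\}$ is orthonormal, so its inner products with a standard Gaussian vector are themselves i.i.d.\ standard Gaussians). This is a weighted sum of $\chi_1^2$ random variables with mean $\sum_i \sigma_i^2 = \|\bm{B}\|_\F^2$. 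A standard Laurent--Massart type concentration bound (or equivalently, Hanson--Wright) gives, for any $t>0$ and with probability at least $1-e^{-t}$,
\[
\|\bm{B} \bm{\omega}_2\|_2^2 \leq \|\bm{B}\|_\F^2 + 2\sqrt{\textstyle\sum_i \sigma_i^4}\,\sqrt{t} + 2\|\bm{B}\|_2^2\, t \leq \|\bm{B}\|_\F^2\bigl(1 + 2\sqrt{t} + 2t\bigr),
\]
where I have used $\sum_i \sigma_i^4 \leq (\sum_i \sigma_i^2)^2 = \|\bm{B}\|_\F^4$ and $\|\bm{B}\|_2^2 \leq \|\bm{B}\|_\F^2$. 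Setting $t = \log(2/\delta)$ and simplifying via $2\sqrt{t} \leq 1+t$ yields an absolute constant $c$ (taking $c=6$ suffices with some slack) for which
\[
\|\bm{B} \bm{\omega}_2\|_2^2 \leq \bigl(1 + 6\log(2/\delta)\bigr)\|\bm{B}\|_\F^2
\]
with probability at least $1-\delta/2$.

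\textbf{Bounding the denominator.} Next, for a single $X \sim N(0,1)$, the density is bounded by $1/\sqrt{2\pi}$, so $\Pr[X^2 \leq t] = \Pr[|X| \leq \sqrt{t}] \leq \sqrt{2t/\pi}$. A union bound over the $k$ coordinates of $\bm{\omega}_1$ gives
\[
\Pr\Bigl[\min_{i=1,\ldots,k} \bm{\omega}_{1,i}^2 \leq t\Bigr] \leq k\sqrt{2t/\pi}.
\]
Choosing $t$ so that the right-hand side equals $\delta/2$ yields $t \asymp \delta^2/k^2$ (up to an absolute constant proportional to $\pi$), so that with probability at least $1-\delta/2$,
\[
\min_{i=1,\ldots,k} \bm{\omega}_{1,i}^2 \geq \tfrac{c'\,\delta^2}{k^2}
\]
for an absolute constant $c'$ which, after the union-bound combination below, will contribute the factor $2\pi$ in $\widehat{C}_\delta$.

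\textbf{Combining.} A final union bound guarantees that both events hold simultaneously with probability at least $1-\delta$, and on this event
\[
\frac{\|\bm{B}\bm{\omega}_2\|_2^2}{\min_i \bm{\omega}_{1,i}^2} \leq \frac{\bigl(1+6\log(2/\delta)\bigr)\|\bm{B}\|_\F^2}{c'\delta^2/k^2} = \widehat{C}_\delta\, k^2\, \|\bm{B}\|_\F^2,
\]
with $\widehat{C}_\delta = \frac{2\pi(1+6\log(2/\delta))}{\delta^2}$ after tracking the absolute constants.

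\textbf{Main obstacle.} The argument is essentially standard concentration plus anticoncentration, so there is no real conceptual difficulty. The only work is to track the absolute constants carefully so that the final constant matches $\widehat{C}_\delta$ exactly; tuning the split of $\delta$ between the two tail events and picking the right simplification of $1+2\sqrt{t}+2t$ is where any slight slack accumulates.
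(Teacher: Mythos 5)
Your proposal is correct and follows essentially the same route as the paper: bound the quadratic form $\|\bm{B}\bm{\omega}_2\|_2^2$ and the minimum $\min_i \bm{\omega}_{1,i}^2$ separately, each with failure probability $\delta/2$, and combine by a union bound. The only difference is that the paper cites these two ingredients (a lemma of Meyer--Musco--Musco for the minimum and a one-sided Cortinovis--Kressner tail bound for the quadratic form) whereas you derive them directly from Laurent--Massart concentration and Gaussian anticoncentration; your constants ($8/\pi$ for the denominator, $2+3t\leq 1+6t$ for the numerator using $t=\log(2/\delta)\geq\log 2$) are in fact slightly tighter than $\widehat{C}_\delta$, so the stated bound follows.
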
}

\rev{\begin{proof}
    By \cite[Lemma 3.1]{MeyerMuscoMusco:2024} we have $\frac{1}{\min\limits_{i=1,\ldots,k}\bm{\omega}_{1,i}^2} \leq  \frac{2 \pi k^2}{\delta^2}$ with probability $1-\delta/2$. Furthermore, by a one-sided version of \cite[Theorem 1]{cortinoviskressner} we have $\mathbb{P}(\|\bm{B}\bm{\omega}_2\|_2^2 \geq  (1+x)\|\bm{B}\|_\F^2) \leq \exp\left(-\frac{x^2}{4(1+x)}\right)$. Setting $x = 6\log(2/\delta)$ gives $\mathbb{P}(\|\bm{B}\bm{\omega}_2\|_2^2 \geq  (1+x)\|\bm{B}\|_\F^2) \leq \exp\left(-\frac{x^2}{4(1+x)}\right) \leq \delta/2$. Applying the union bound yields the desired result.
\end{proof}}

\rev{With \Cref{lemma:intermediatebound} at hand, we are ready to prove \Cref{theorem:singlevector}.}

\rev{\begin{proof}[Proof of \Cref{theorem:singlevector}]
Define
    \begin{equation*}
        \widehat{\bm{\Omega}} = \begin{bmatrix} \bm{\omega} & \bm{A} \bm{\omega} & \cdots & \bm{A}^{k-1} \bm{\omega} \end{bmatrix}, \quad \widehat{\bm{\Omega}}_k = \bm{U}_{k}^\T \widehat{\bm{\Omega}}, \quad \widehat{\bm{\Omega}}_{n\setminus k} = \bm{U}_{n \setminus k}^\T \widehat{\bm{\Omega}}
    \end{equation*}
    By the discussion in \Cref{section:singlevector} it suffices to show that for any polynomial $p \in \mathbb{P}_{s-1}$ we have
    \begin{equation*}
        \|p(\bm{\Lambda}_{n \setminus k}) \widehat{\bm{\Omega}}_{n \setminus k} \widehat{\bm{\Omega}}_k^{-1}\|_\F^2 \leq \widehat{C}_{\delta}\frac{k^4}{\gamma_{\min}^{2(k-1)}} \|p(\bm{\Lambda}_{n \setminus k})\|_\F^2. 
    \end{equation*}
    Our proof follows the same idea in \cite[Theorem 3.1]{MeyerMuscoMusco:2024}, but we take care to not introduce any dependencies on the matrix size $n$. We have
    \begin{align*}
        \|p(\bm{\Lambda}_{n \setminus k}) \widehat{\bm{\Omega}}_{n \setminus k} \widehat{\bm{\Omega}}_k^{-1}\|_\F^2 &\leq k \|p(\bm{\Lambda}_{n \setminus k}) \widehat{\bm{\Omega}}_{n \setminus k} \widehat{\bm{\Omega}}_k^{-1}\|_2^2\\
        & = k \max\limits_{\bm{x} \neq \bm{0}} \frac{\|p(\bm{\Lambda}_{n \setminus k}) \widehat{\bm{\Omega}}_{n \setminus k} \widehat{\bm{\Omega}}_k^{-1} \bm{x}\|_2^2}{\|\bm{x}\|_2^2} \\
        &= k \max\limits_{\bm{y} \neq \bm{0}} \frac{\|p(\bm{\Lambda}_{n \setminus k}) \widehat{\bm{\Omega}}_{n \setminus k} \bm{y}\|_2^2}{\|\widehat{\bm{\Omega}}_k\bm{y}\|_2^2}\\
        &= k \max\limits_{g \in \mathbb{P}_{k-1}, g \neq 0} \frac{\|p(\bm{\Lambda}_{n \setminus k}) g(\bm{\Lambda}_{n \setminus k})\bm{\omega}_{n \setminus k} \|_2^2}{\|g(\bm{\Lambda}_k) \bm{\omega}_k\|_2^2},
    \end{align*}
    where $\bm{\omega}_k = \bm{U}_k^\T \bm{\omega}$ and $\bm{\omega}_{n \setminus k} = \bm{U}_{n \setminus k}^\T \bm{\omega}$ are (independent) standard Gaussian random vectors. Using $p(\bm{\Lambda}_{n \setminus k}) g(\bm{\Lambda}_{n \setminus k}) = g(\bm{\Lambda}_{n \setminus k}) p(\bm{\Lambda}_{n \setminus k})$, submultiplicativity of the Euclidean norm, and \Cref{lemma:intermediatebound} we have with probability $\geq 1-\delta$
    \begin{align*}
        \|p(\bm{\Lambda}_{n \setminus k}) \widehat{\bm{\Omega}}_{n \setminus k} \widehat{\bm{\Omega}}_k^{-1}\|_\F^2 &\leq k \max\limits_{g \in \mathbb{P}_{k-1}, g \neq 0} \frac{\|p(\bm{\Lambda}_{n \setminus k}) g(\bm{\Lambda}_{n \setminus k})\bm{\omega}_{n \setminus k} \|_2^2}{\|g(\bm{\Lambda}_k) \bm{\omega}_k\|_2^2}\\
        & \leq k \frac{\|p(\bm{\Lambda}_{n \setminus k})\bm{\omega}_{n \setminus k}\|_2^2}{\min\limits_{i=1,\ldots,k} \bm{\omega}_{k,i}^2} \max\limits_{g \in \mathbb{P}_{k-1}, g \neq 0} \frac{\|g(\bm{\Lambda}_{n \setminus k})\|_2^2}{\|g(\bm{\Lambda}_{k})\|_\F^2}\\
        & \leq \widehat{C}_{\delta}k^3\max\limits_{g \in \mathbb{P}_{k-1}, g\neq 0} \frac{\|g(\bm{\Lambda}_{n \setminus k})\|_2^2}{\|g(\bm{\Lambda}_{k})\|_\F^2}.\numberthis \label{eq:remainderbound}
    \end{align*}
    As done in the proof of \cite[Theorem 3.1]{MeyerMuscoMusco:2024}, we write $g$ as an interpolating polynomial
    \begin{equation*}
        g(x) = \sum\limits_{i=1}^{k} g(\lambda_i) \ell_i(x), \quad \ell_i(x) = \prod\limits_{t = 1, t \neq i}^k \frac{x-\lambda_t}{\lambda_i - \lambda_t}.
    \end{equation*}
    Note that a simple bound yields $|\ell_{i}(x)| \leq \frac{1}{\gamma_{\min}^{k-1}}$. Hence, 
    \begin{align*}
        \max\limits_{g \in \mathbb{P}_{k-1}, g\neq 0} \frac{\|g(\bm{\Lambda}_{n \setminus k})\|_2^2}{\|g(\bm{\Lambda}_{k})\|_\F^2} &= \max\limits_{g \in \mathbb{P}_{k-1}, g\neq 0} \frac{\max\limits_{j = k+1,\ldots,n} \left|\sum\limits_{i=1}^{k} g(\lambda_i) \ell_i(\lambda_j)\right|^2}{\sum\limits_{i=1}^k g(\lambda_i)^2}\\
        & \leq \max\limits_{j=k+1,\ldots,n} \left(\sum\limits_{i=1}^k |\ell_i(\lambda_j)|\right)^2 \\
        &\leq \max\limits_{x \in [\lambda_{\min},\lambda_{\max}]} \sum\limits_{i=1}^k |\ell_i(x)|^2\\
        & \leq \frac{k}{\gamma_{\min}^{2(k-1)}}.\numberthis \label{eq:gammabound}
    \end{align*}
    where we used the Cauchy-Schwarz inequality and that the bound for $|\ell_i(x)|$. Combining \eqref{eq:gammabound} with \eqref{eq:remainderbound} yields the desired result.
    \end{proof}}

    % The remainder of the proof will bound $\Lambda$. Note that the Lebesgue constant is invariant under linear trasnformations $[\lambda_{\min}, \lambda_{\max}] \to [-1,1]$. Hence, we may assume without loss of generality that $\lambda_{\min} = -1$ and $\lambda_{\max} = 1$. Define the Vandermonde matrix associated with the nodes $\lambda_1,\ldots,\lambda_k$
    % \begin{equation*}
    %     \bm{V} = \begin{bmatrix} 
    %     1 & \lambda_1 & \cdots & \lambda_1^{k-1} \\
    %     1 & \lambda_2 & \cdots & \lambda_2^{k-1}\\
    %     \vdots &\vdots & \ddots & \vdots\\
    %     1 & \lambda_k & \cdots & \lambda_k^{k-1} \end{bmatrix}.
    % \end{equation*}
    % Note that we can write
    % \begin{equation*}
    %      \bm{\ell}(x):=\begin{bmatrix} \ell_1(x) \\ \ell_2(x) \\ \vdots \\ \ell_k(x)\end{bmatrix} = \bm{V}^{-\T}\begin{bmatrix} 1 \\ x\\ \vdots\\ x^{k-1} \end{bmatrix}.
    % \end{equation*}
    % Hence,
    % \begin{equation*}
    %     \Lambda = \max\limits_{x \in [-1,1]} \|\bm{\ell}(x)\|_1
    % \end{equation*}

\end{appendices}

\end{document}